\definecolor{colorpink}{RGB}{251,53,155}
\definecolor{colorblue}{RGB}{0,148,200}
\definecolor{colorgreen}{RGB}{0,150,0}
\def\beq{\begin{eqnarray}}
\def\eeq{\end{eqnarray}}
\def\noi{\noindent}
\def\nn{\nonumber}
\def\la{\langle}
\def\ra{\rangle}
\def\vec{\text{vec}}
\def\mat{\text{mat}}
\newtheorem{lemma}{Lemma}
\newtheorem{theorem}{Theorem}
\newtheorem{proposition}{Proposition}
\newtheorem{assumption}{Assumption}
\newcommand{\bbb}[1]{\boldsymbol{\mathbf{#1}}}
\def\S{\bar{\mathcal{S}}}
\def\Z{\bar{\mathcal{Z}}}
\def\N{\mathcal{N}}
\def\prox{\mathrm{prox}}
\def\hone{62pt}
\def\htwo{62pt}
\begin{document}
%
\title{A Coordinate-wise Optimization Algorithm for Sparse Inverse Covariance Selection}

\author{Ganzhao Yuan,~Haoxian Tan,~Wei-Shi Zheng
\IEEEcompsocitemizethanks{
\IEEEcompsocthanksitem Ganzhao Yuan is with School of Data and Computer Science, Sun Yat-sen University (SYSU), China. E-mail: yuanganzhao@gmail.com \protect\\

\IEEEcompsocthanksitem Haoxian Tan is with School of Data and Computer Science, Sun Yat-sen University (SYSU), China. E-mail: tanhx3@mail2.sysu.edu.cn \protect\\

\IEEEcompsocthanksitem Wei-Shi Zheng is with School of Data and Computer Science, Sun Yat-sen University (SYSU), China.E-mail: zhwshi@mail.sysu.edu.cn \protect\\

}
}

%
%

\markboth{}
{Shell \MakeLowercase{\textit{et al.}}: Bare Advanced Demo of IEEEtran.cls for IEEE Computer Society Journals}
%



\IEEEtitleabstractindextext{%
\begin{abstract}
Sparse inverse covariance selection is a fundamental problem for analyzing dependencies in high dimensional data. However, such a problem is difficult to solve since it is NP-hard. Existing solutions are primarily based on convex approximation and iterative hard thresholding, which only lead to sub-optimal solutions. In this work, we propose a coordinate-wise optimization algorithm to solve this problem which is guaranteed to converge to a coordinate-wise minimum point. The algorithm iteratively and greedily selects one variable or swaps two variables to identify the support set, and then solves a reduced convex optimization problem over the support set to achieve the greatest descent. As a side contribution of this paper, we propose a Newton-like algorithm to solve the reduced convex sub-problem, which is proven to always converge to the optimal solution with global linear convergence rate and local quadratic convergence rate. Finally, we demonstrate the efficacy of our method on synthetic data and real-world data sets. As a result, the proposed method consistently outperforms existing solutions in terms of accuracy.


\end{abstract}

\begin{IEEEkeywords}
Sparse Optimization, Coordinate Descent Algorithm, Inverse Covariance Selection, Nonconvex Optimization, Convex Optimization.
\end{IEEEkeywords}}

\maketitle

\IEEEdisplaynontitleabstractindextext

%
\IEEEpeerreviewmaketitle

\section{Introduction}
In this paper, we mainly focus on the following nonconvex optimization problem:
\beq \label{eq:main}
\begin{split}
 \min_{\bbb{X} \in \mathbb{R}^{n\times n}}~f(\bbb{X})\triangleq  \langle \bbb{\Sigma},\bbb{X} \rangle  -\log \det(\bbb{X}),\\
s.t.~\bbb{X} \succ \bbb{0},~ \| \bbb{X} \|_{0,\text{off}} \leq s,~~~~~~~~~
\end{split}
\eeq
\noi where $\bbb{\Sigma} \in \mathbb{R}^{n\times n}$ is a given symmetric covariance matrix of the input data set, $\|\cdot  \|_{0,\text{off}} $ counts the number of non-diagonal and non-zero elements of a square matrix, and $s$ is a positive integer that specifies the sparsity level of the solution. $\bbb{X} \succ 0$ means $\bbb{X}$ is positive definite. $\la \cdot , \cdot \ra$ stands for the standard inner product.

The optimization problem in (\ref{eq:main}) is known as sparse inverse covariance selection in the literature \cite{friedman2008,jordan1998learning}. It provides a good way of analyzing dependencies in high dimensional data and captures varieties of applications in computer vision and machine learning (e.g. biomedical image analysis \cite{honorio2009sparse}, scene labeling \cite{SoulyS16}, brain
functional network classification \cite{zhou2014discriminative}). The log-determinant function is introduced for maximum likelihood estimation, and the $\ell_0$ norm is used to reduce over-fitting and improve the interpretability of the model. We remark that when the sparsity constraint is absent, one can set the gradient of the objective function $f(\cdot)$ to zero (i.e. $\bbb{\Sigma}-\bbb{X}^{-1}=\bbb{0}$) and output $\bbb{\Sigma}^{-1}$ as the optimal solution.

Problem (\ref{eq:main}) is very challenging due to the introduction of the combinatorial $\ell_0$ norm. Existing solutions can be categorized into two classes: convex $\ell_1$ approximation and iterative hard thresholding. Convex $\ell_1$ approximation simply replaces the $\ell_0$ norm by its tightest convex relaxation $\ell_1$ norm. In the past decades, a plethora of approaches have been proposed to solve the $\ell_1$ norm approximation problem, which include projected sub-gradient method \cite{DuchiGK08}, (linearized) alternating direction method \cite{ScheinbergMG10,yuan2012}, quadratic approximation method \cite{oztoprak2012,OlsenoNR12,HsiehSDR14,HsiehDRB12}, block coordinate descent method \cite{friedman2008,BanerjeeGd08}, Nesterov's first-order optimal method \cite{lu2009smooth,lu2010adaptive,dAspremontBG08}, primal-dual interior point method \cite{Li2010}. Despite the popularity of convex methods, they fail to control the sparsity of the solution and often lead to sub-optimal accuracy for the original non-convex problem. Recent attention has been paid to solving the original non-convex problem directly by the researchers \cite{YuanG15,zhang2010analysis,candes2008enhancing}.

Iterative hard thresholding method considers iteratively setting the small elements (in magnitude) to zero in a gradient descent manner. By using this strategy, it is able to control the sparsity of the solution directly and exactly. Due to its simplicity, it has been widely used and incorporated into the optimization framework of penalty decomposition algorithm \cite{lu2013} and mean doubly alternating direction method \cite{dong2013efficient}. In \cite{lu2013}, it is shown that for the general sparse optimization problem, any accumulation point of the sequence generated by the penalty decomposition algorithm always satisfies the first-order optimality condition of the problem.

Recently, A. Beck and Y. Vaisbourd present and analyze a new optimality criterion which is based on coordinate-wise optimality \cite{beck2016}. They show that coordinate-wise optimality is strictly stronger than the optimality criterion based on hard thresholding. They apply their algorithm to principal component analysis and show that their method consistently outperforms the well-known truncated power method \cite{yuan2013truncated}. Inspired by this work, we extend their method to solve sparse inverse covariance selection problem. We are also aware of the work \cite{marjanovic2015} where a cyclic coordinate descent decent algorithm (combined with a randomized initialization strategy) is considered to solve the sparse inverse covariance selection problem. However, their method only addresses the $\ell_0$ norm regularized optimization problem and it fails to control the sparsity level of the solution.


\bbb{Contributions:} The contributions of this work are three-fold. (i) We propose a new coordinate-wise optimization algorithm for sparse inverse covariance selection. The algorithm iteratively and greedily selects one variable or swap two variables to identify the support set, and then solves a reduced convex optimization problem over the support set (See Section \ref{algo}). (ii) An efficient Hessian-free Newton-like algorithm to solve the convex subproblem is proposed (Section \ref{re-optimizer}). (iii) We provide some theoretical analysis for the proposed Coordinate-Wise Optimization Algorithm (CWOA) and the Newton-Like Optimization Algorithm (NLOA). We prove that CWOA is guaranteed to converge to a coordinate-wise minimum point of the original nonconvex problem and the NLOA is guaranteed to converge to the global optimal solution of the convex subproblem with global linear rate and local quadratic convergence rate (Section \ref{eq:theana}). (iv) Extensive experiments have shown that our method \emph{consistently} outperforms existing solutions in terms of accuracy (Section \ref{expr}).



\bbb{Notations:} In this paper, boldfaced lowercase letters denote vectors and uppercase letters denote real-valued matrices. We denote $\bbb{\lambda}(\bbb{X}) \in \mathbb{R}^n$ as the eigenvalues of $\bbb{X}$ in increasing order. All vectors are column vectors and superscript $T$ denotes transpose. $\vec(\bbb{X}) \in \mathbb{R}^{n^2 \times 1}$ stacks the columns of the matrix $\bbb{X} \in \mathbb{R}^{n\times n}$ into a column vector and $\mat(\bbb{x}) \in \mathbb{R}^{n\times n}$ converts $\bbb{x}\in\mathbb{R}^{n^2 \times 1}$ into a matrix. Thus, $\vec(\mat(\bbb{x}))=\bbb{x}$ and $\mat(\vec(\bbb{X}))=\bbb{X}$. We use $\la \bbb{X},\bbb{Y}\ra$ and $\bbb{X}\otimes\bbb{Y}$ to denote the Euclidean inner product and Kronecker product of $\bbb{X}$ and $\bbb{Y}$, respectively. For any matrix $\bbb{X}\in \mathbb{R}^{n\times n}$ and any $i,j\in \{1,2,...,n\}$, we denote by $\bbb{X}_{ij}$ the element of $\bbb{X}$ in $i^{\text{th}}$ row and $j^{\text{th}}$ column and use $\bbb{X}_k$ to denote the $k$ position of $\vec(\bbb{X})$. Therefore, we have $\bbb{X}_{ij}=\bbb{X}_{(j-1)\times n+i}$. We denote $\bbb{e}_i$ as a unit vector with a 1 in the $i^{\text{th}}$ entry and 0 in all other entries. We use $j\in\{1,2,...,n^2\}$ to denote any position in a square matrix of size $n\times n$ where $n$ is known from the context, and use $row(j)$ and $col(j)$ to denote the corresponding row and column for $j$. We denote $\bbb{E}_j$ is a square symmetric matrix with the entries $(row(j),col(j))$ and $(col(j),row(j))$ equal 1 and 0 in all other ones. Note that when $row(j)\neq col(j)$, we have $\bbb{E}_j=\bbb{e}_{row(j)}\bbb{e}_{col(j)}^T + \bbb{e}_{col(j)}\bbb{e}_{row(j)}^T$. Finally, for any matrices $\bbb{X}\in \mathbb{R}^{n\times n}$ and $\bbb{H}\in \mathbb{R}^{n^2\times n^2}$, we define $\|\bbb{D}\|_{\bbb{H}}^2 \triangleq \vec(\bbb{D})^T \bbb{H} \vec(\bbb{D})$ and $\bbb{H}\circ \bbb{X} \triangleq \mat( \bbb{H}\vec(\bbb{X})) \in \mathbb{R}^{n\times n}$.


\section{ Coordinate-wise Optimization Algorithm}\label{algo}

This section presents our coordinate-wise optimization algorithm which is guaranteed to converge after a finite amount of iterations to a coordinate-wise minimum point \cite{beck2013sparsity,beck2016}. We denote $\S(\bbb{X})$ and $\Z(\bbb{X})$ as the index of \textit{non-diagonally} non-zero elements and zero elements of $\bbb{X}$, respectively.

First of all, we notice that when the support set $S$ is known, problem (\ref{eq:main}) reduces to the following convex optimization problem:
\beq \label{eq:subproblem}
\min_{\bbb{X}\succ 0}~ f(\bbb{X}),~s.t.~\bbb{X}_{Z} = \bbb{0},~\text{with}~Z\triangleq \{1,2,...,n^2\} \setminus S.
\eeq
Our algorithm iteratively and greedily selects one variable or swaps two variables to identify the support set $S$, and then solves a reduced convex sub-problem in (\ref{eq:subproblem}) to achieve the greatest descent.


\begin{algorithm}[!tb]
\caption{ {\bf CWOA: A Coordinate-wise Optimization Algorithm for Sparse Inverse Covariance Selection.}}
\label{alg:main}
	\begin{algorithmic}
	\STATE Input: Sparsity level $s$.
	\STATE Output: The solution $\bbb{X}^*$.
	\STATE Initialization: Set $\bbb{X}^0=\bbb{O}^{-1}$, where $\bbb{O}$ is a diagonal matrix with $\bbb{O}_{ii}=\bbb{\Sigma}_{ii},~\forall i\in[n]$. Set $k=0$. 
	\WHILE{\TRUE}
\STATE $\setminus \setminus$ Greedy Pursuit Stage
		\WHILE{$   \| \bbb{X}^k \|_{0,\text{off}}   < s$}
			\FOR{every $j \in \Z(\bbb{X}^k)$}
				\STATE
\vspace{-10pt}
\beq \label{eq:stage:1}
\begin{split}
 f_j = \min_{\theta}~f(\bbb{X}^k + \theta \bbb{E}_j),s.t.~\bbb{X}^k + \theta \bbb{E}_j \succ 0
\end{split}
\eeq
\vspace{-10pt}
			\ENDFOR
			\STATE $j_k = \arg\min\{f_j: j \in \Z(\bbb{X}^k)\}$
			\IF{$f_{j_k} < f(\bbb{X}^k)$}
\STATE Solve (\ref{eq:subproblem}) to get $\bbb{X}^{k+1}$ with $S=\S(\bbb{X}^k)\cup j_{k}$.
				\STATE $k = k + 1$
			\ENDIF
		\ENDWHILE
\STATE
\noindent \rule{6cm}{0.4pt}
\STATE $\setminus \setminus$ Swap Coordinates Stage
		\FOR{every $i \in \S(\bbb{X}^k) $}
                  \FOR {every $j \in \Z(\bbb{X}^k)$}
				\STATE
\vspace{-10pt}
\beq \label{eq:stage:2}
\begin{split}
 f_{i,j} = \min_{\theta}~ f(\bbb{X}^k - \bbb{X}^k_{i}\bbb{E}_i +
				\theta \bbb{E}_j),\\
 s.t.~\bbb{X}^k - \bbb{X}^k_{i}\bbb{E}_i +
				\theta \bbb{E}_j \succ 0
\end{split}
\eeq
\vspace{-10pt}
		\ENDFOR
\ENDFOR
		\STATE $(i_k,j_k) = \arg\min\{f_{i,j}: i \in \S(\bbb{X}^k), j \in \Z(\bbb{X}^k)\}$
		\IF{$f_{i_k,j_k} < f(\bbb{X}^k) $}
\STATE Solve (\ref{eq:subproblem}) to get $\bbb{X}^{k+1}$ with $S=(\S(\bbb{X}^k)\setminus i_k)\cup j_k$.
			\STATE $ k = k + 1$
		\ELSE
			\STATE set $\bbb{X}^* \leftarrow \bbb{X}^{k+1}$
			\STATE break
		\ENDIF
	\ENDWHILE

	\end{algorithmic}

\end{algorithm}

We summarize our proposed method in Algorithm \ref{alg:main} and have a few remarks on it below.

$\bullet$ Two-stage algorithm. At each iteration of the algorithm, one or two variables of the solution are updated. At the first greedy pursuit stage, the algorithm greedily picks one coordinate $i\in\Z(\bbb{X}^k)$ that leads to the greatest descent from $\Z(\bbb{X}^k)$. This strategy is also known as forward greedy selection in the literature \cite{tropp2007signal,zhang2011adaptive}. At the second swap coordinates stage, the algorithm enumerates all the possible pairs $(i,j)$ with $i\in\S(\bbb{X}^k)$ and $j\in\Z(\bbb{X}^k)$ that leads to the greatest descent and changes the two coordinates from zero/non-zero to non-zero/zero. At both stages, once the support set has been updated, Algorithm \ref{alg:main} runs a convex subproblem procedure to solve (\ref{eq:subproblem}) over the support set to compute a more `compact' solution.

$\bullet$ One-dimensional sub-problem. The problems in (\ref{eq:stage:1}) and (\ref{eq:stage:2}) reduce to the following optimization problem:
\beq \label{eq:one:dim}
\begin{split}
&&  \min_{\theta}~f(\theta) \triangleq \la  \bbb{\Sigma},\bbb{V}+\theta \bbb{E}_j \ra-\log \det( \bbb{V}+\theta \bbb{E}_j)\\
&& s.t.~\bbb{V}+\theta \bbb{E}_i \succ \bbb{0}~~~~~~~~~~~~~~~~~~~~~
\end{split}
\eeq
\noi with $\bbb{V}=\bbb{X}^k$ for (\ref{eq:stage:1}) and $\bbb{V}=\bbb{X}^k-\bbb{X}_i^k \cdot \bbb{E}_i$ for (\ref{eq:stage:2}). We now discuss how to simplify problem (\ref{eq:one:dim}). We define $\bbb{Y} \triangleq \bbb{V}^{-1}$ and obtain the following equations: $ \det(\bbb{V} + \theta\bbb{E}_j) = \det(\bbb{V} (\bbb{I} + \theta \bbb{Y} \bbb{E}_j))
=\det(\bbb{V})  \det(\bbb{I} + \theta \bbb{U}^T_{cr}\bbb{Y}\bbb{U}_{rc})= \det(\bbb{V})  (1 + 2\bbb{Y}_{rc}\theta -   \bbb{O}_{rc}  \theta^2 ) $, where $\bbb{O}_{rc} \triangleq \bbb{Y}_{rr} \bbb{Y}_{cc}- \bbb{Y}_{rc}^2 >0 $,~$r\triangleq row(j)$,~$c \triangleq col(j)$,~$\bbb{U}_{rc}  \triangleq [\bbb{e}_{r}~\bbb{e}_{c}] \in \mathbb{R}^{n\times 2}$,~$\bbb{U}_{cr} \triangleq [\bbb{e}_{c}~\bbb{e}_{r}] \in \mathbb{R}^{n\times 2}$. Noticing that $-\log\det(\bbb{V} + \theta\bbb{E}_j) = - \log\det(\bbb{V}) - \log(1 + 2\bbb{Y}_{rc}\theta -   \bbb{O}_{rc}  \theta^2 )$ and $ \la \bbb{\Sigma},\bbb{E}_j \ra = 2\bbb{\Sigma}_j = 2\bbb{\Sigma}_{rc}$, we can simplify problem (\ref{eq:one:dim}) to the following one-dimensional convex problem:
\beq
\textstyle \min_{\theta}~ f(\theta) \triangleq 2\theta \bbb{\Sigma}_{rc} - \log \left(1 + 2\bbb{Y}_{rc}\theta -  \bbb{O}_{rc} \theta^2 \right) + C \nn\\
\textstyle s.t.~1 + 2\bbb{Y}_{rc}\theta -   \bbb{O}_{rc} \theta^2 >0,~~~~~~~~~~~~~~~~~\nn
\eeq
\noi where $C = \la \bbb{\Sigma}, \bbb{V}\ra - \log \det(\bbb{V})$ is a constant. Noting that $f(\theta)$ is differentiable, we set the gradient of $f(\theta)$ to zero, we obtain $0=2\bbb{\Sigma}_{rc}+ \frac{2 \bbb{O}_{rc} \theta - 2  \bbb{Y}_{rc}}{1 + 2\bbb{Y}_{rc}\theta -   \bbb{O}_{rc} \theta^2}$. There are two solution to this equation. However, only one of these satisfies the bound constraint $1 + 2\bbb{Y}_{rc}\theta -   \bbb{O}_{rc} \theta^2  >0$. Thus, the optimal solution $\theta^*$ can be computed as:
\begin{equation}
\textstyle
\theta^* = \left \{
\begin{aligned}
{\bbb{Y}_{rc}}/{\bbb{O}_{rc}},~~~~~~~~~~~~~~~~~~~~~~~~~~~~~~~~ \text{if}~ \bbb{\Sigma}_{rc} = 0;\\
 {\bbb{Y}_{rc}}/{\bbb{O}_{rc}} + {1}/{(2\bbb{\Sigma}_{rc})} - ~~~~~~~~~~~~~~~~~~~~~~~~~~~~~~~~~~~~~~~\nn\\
  \sqrt{\bbb{O}^2_{rc}+4\bbb{\Sigma}^2_{rc}\bbb{Y}_{rr}\bbb{Y}_{cc}}/{(2\bbb{O}_{rc}\bbb{\Sigma}_{rc})},~ \text{if}~\bbb{\Sigma}_{rc} \neq 0. \nn
\end{aligned}\nn
\right.
\label{non-diag-minimizer}
\end{equation}

$\bullet$ Fast matrix computation. In our algorithm, we assume that $\bbb{Y} \triangleq \bbb{V}^{-1}$ is available. This can be achieved by using the follow strategy. We keep a record of $\bbb{X}^{-1}$ in every iteration. Once the solution $\bbb{X}$ is changed to $\bbb{T} = \bbb{X} + \varpi \bbb{E}_j$, we quickly estimate $\bbb{T}^{-1}$ using the well-known Sherman-Morrison-Woodbury formula \footnote{  $(\bbb{A}+\bbb{PCQ})^{-1} = \bbb{A}^{-1} - \bbb{A}^{-1}\bbb{P} (\bbb{C}^{-1}+\bbb{QA}^{-1}\bbb{P})^{-1}\bbb{QA}^{-1}$  }. Specifically, we rewrite $\bbb{T}$ as $\bbb{T}=\bbb{X}+ \bbb{U}_{rc} \text{diag}(\varpi \bbb{I}_2) \bbb{U}_{cr}^T$ and apply the Sherman-Morrison-Woodbury formula with $\bbb{A}=\bbb{X},~\bbb{P}=\bbb{U}^{rc},~\bbb{C}=\text{diag}(\varpi \bbb{I}_2)$ and $\bbb{Q}=\bbb{U}_{cr}^T$, leading to the following equation: $\bbb{T}^{-1}=\bbb{X}^{-1}-\bbb{X}^{-1}\bbb{U}^{rc}( (\text{diag}(\varpi \bbb{I}_2))^{-1} + \bbb{U}_{cr}^T \bbb{X}^{-1} \bbb{U}^{rc} )^{-1}\bbb{U}_{cr}^T\bbb{X}^{-1}$, where \text{diag}$(\bbb{z})$ is a diagonal matrix with $\bbb{z}$ as the main diagonal entries and $\bbb{I}_d \in \mathbb{R}^{d\times d}$ is an identity matrix. Finally, we obtain the following results: $\bbb{T}^{-1} = \bbb{X}^{-1} - \frac{\varpi}{1-\delta \varpi^2+2 \bbb{Y}_{rc} \varpi}\cdot \bbb{E}_{rc}\bbb{W}\bbb{E}_{cr}^T$, where $\delta \triangleq \bbb{Y}_{rr} \bbb{Y}_{cc}-\bbb{Y}_{rc}^2$, $\bbb{E}_{rc} \triangleq [\bbb{Y}_{:r} \bbb{Y}_{:c}]\in \mathbb{R}^{n\times 2}$, $\bbb{E}_{cr} \triangleq [\bbb{Y}_{:c} \bbb{Y}_{:r}]\in \mathbb{R}^{n\times 2}$, $\bbb{W} \triangleq {\tiny \begin{pmatrix}
                                              1+\varpi \bbb{Y}_{rc}& - \varpi \bbb{Y}_{cc} \\
                                               -\varpi \bbb{Y}_{rr}& 1+\varpi \bbb{Y}_{rc}\\
                                             \end{pmatrix}}$, and $\bbb{Y}_{:r}\in \mathbb{R}^n$ denotes the $r$-th column of $\bbb{Y}$.

\textbf{Remarks:} (i) Algorithm \ref{alg:main} can be viewed as an improved version of classical greedy pursuit method for solving the sparsity-constrained inverse covariance selection problem. Given the fact that greedy pursuit methods achieve state-of-the-art performance in varieties of non-convex optimization problems (e.g. compressed sensing \cite{tropp2007signal}, kernel learning \cite{KeerthiCD06}, and sensor selection \cite{KrauseG07}), our proposed method is expected to achieve state-of-the-art performance as well. (ii) Algorithm \ref{alg:main} is also closely related to forward-backward greedy method in the literature \cite{zhang2011adaptive}. To obtain the greatest descent, while  forward-backward strategy considers the removal step and adding step sequentially, the swapping strategy (refer to the swap coordinates stage in Algorithm \ref{alg:main}) considers these two steps simultaneously. Thus, the swapping strategy is generally stronger than the forward-backward strategy.


\section{Convex Optimization Over Support Set}\label{re-optimizer}

After the support set has been determined, one need to solve the reduced convex sub-problem as in (\ref{eq:subproblem}), which is equivalent to the following convex composite minimization problem:
\beq \label{eq:general:op}
\begin{split}
\min_{\bbb{X}\succ 0}~F(\bbb{X}) \triangleq f(\bbb{X}) + p(\bbb{X}), ~~~~~~~~\\
\text{with}~p(\bbb{X}) \triangleq I_{\Omega}\left(\bbb{X}\right),~\Omega\triangleq \{\bbb{X} ~|~ \bbb{X}_{Z}=\textbf{0}\},
\end{split}
\eeq
where $Z\triangleq \{1,2,...,n^2\} \setminus S$ and $I_{\Omega}$ is an indicator function of the convex set $\Omega$ with $I_{\Omega}(\bbb{V}) \tiny = \begin{cases}
0, & \bbb{V} \in \Omega\\
\infty, &{\text{otherwise.}}\\
\end{cases}$. In what follows, we present an efficient Newton-Like Optimization Algorithm (NLOA) to tackle this problem. This method has the good merits of greedy descent and fast convergence.


%

Following \cite{tseng2009coordinate,yun2011block,LeeSS14,YuanYZH16}, we develop a quadratic approximation around any solution $\bbb{X}$ for the objective function using second-order Taylor expansion:
\beq
q(\bbb{\Theta,X}) \triangleq f(\bbb{X}) + \langle \bbb{\Theta},g(\bbb{X})\rangle + \frac{1}{2}\vec(\bbb{\Theta})^Th(\bbb{X}) \vec(\bbb{\Theta}),\nn
\eeq
\noi where the first-order and second-order derivatives of the objective function $f(\bbb{X})$ can be expressed as \cite{HsiehSDR14}:
\beq
g(\bbb{X}) = \bbb{\Sigma} - \bbb{X}^{-1},~h(\bbb{X}) =  \bbb{X}^{-1} \otimes \bbb{X}^{-1}.\nn
\eeq

\noi Then, we keep the non-smooth function $g(\bbb{X})$ and build a quadratic approximation for the smooth function $f(\bbb{X})$ by:
  \beq \label{eq:newton:direction}
  d(\bbb{X}^t) \triangleq \arg \min_{\bbb{\Delta}}~q(\bbb{\Delta};{\bbb{X}^t}) + p(\bbb{X}^t+\bbb{\Delta}).
  \eeq

  \noi Once the Newton direction $d(\bbb{X}^t)$ has been computed, one can employ an Arimijo-rule based step size selection to ensure positive definiteness and sufficient descent of the next iterate. We summarize our Newton-like algorithm in Algorithm \ref{alg:newtoncg}. Note that the initial point $\bbb{X}^0$ has to be a feasible solution and the positive definiteness of all the following iterates $\bbb{X}^t$ will be guaranteed by the step size selection procedure (see step \ref{alg:step:linesearch:end} in Algorithm \ref{alg:linearcg}). For notational convenience, we use the shorthand
  \beq
  f^t = f(\bbb{X}^t), ~\bbb{G}^t = g(\bbb{X}^t),~\bbb{H}^t=h(\bbb{X}^t),~\bbb{D}^t=d(\bbb{X}^t)\nn
  \eeq
  \noi to denote the objective value, first-order gradient, hessian matrix and the search direction at the point $\bbb{X}^t$, respectively.

\begin{algorithm}[!h]
\caption{ {\bf NLOA: Newton-Like Optimization Algorithm to Solve (\ref{eq:subproblem}) for Optimization Over Support Set.}}
\begin{algorithmic}[1]
\label{alg:newtoncg}
  \STATE Input: $\bbb{X}^0$ such that $\bbb{X}^0\succ0$ and $\bbb{X}_{Z} =\textbf{0}$.
  \STATE Output: $\bbb{X}^t$
  \STATE Initialize $t=0$
  \FOR {$t=1$ \TO $T_{\text{out}}$}
  \STATE Solve Problem (\ref{eq:newton:direction}) by Algorithm \ref{alg:linearcg} to obtain $d(\bbb{X}^t)$.
  \STATE \label{alg:step:linesearch:begin} Perform step-size search to get $\alpha^t$ such that: 
  \STATE \label{alg:step:linesearch:end}~~~(1) $\bbb{X}^{t+1} = \bbb{X}^t + \alpha^t d(\bbb{X}^t)$ is positive definite and
  \STATE \label{alg:step:linesearch:end2} ~~~(2) there is sufficient decrease in the objective.
  \STATE Increment $t$ by 1
  \ENDFOR
\end{algorithmic}
\label{algo:newton}
\end{algorithm}


\subsection{Computing the Search Direction} \label{subsect:dir}

This subsection focuses on finding the search direction in (\ref{eq:newton:direction}). With the choice of $\bbb{X}^{0} \succ 0$ and $\bbb{X}^{0}_Z=\textbf{0}$, (\ref{eq:newton:direction}) boils down to the following optimization problem:
\beq\label{eq:newton:dir}
\textstyle \min_{\bbb{\Delta}}~\la \bbb{\Delta},  \bbb{G}^t \ra + \frac{1}{2} \vec(\bbb{\Delta})^T \bbb{H}^t \vec(\bbb{\Delta}),~s.t.~\bbb{\Delta}_Z=\textbf{0}.
\eeq
\noi It appears that (\ref{eq:newton:dir}) is very difficult to solve. First, it involves computing and storing an $n^2 \times n^2$ Hessian matrix $\bbb{H}^t$. Second, it is a constrained optimization program with $n\times n$ variables and $|Z|$ equality constraints.

We carefully analyze (\ref{eq:newton:dir}) and consider the following solutions. For the first issue, one can exploit the Kronecker product structure of the Hessian matrix to avoid storing it. Recall that $\left(\bbb{A}\otimes \bbb{B}\right) \vec(\bbb{C}) = \vec(\bbb{BCA}),\forall \bbb{A},\bbb{B},\bbb{C} \in \mathbb{R}^{n\times n}$. Given any vector $\vec(\bbb{D}) \in \mathbb{R}^{n^2 \times 1}$, using the fact that the Hessian matrix can be computed as $\bbb{H}= \bbb{X}^{-1} \otimes \bbb{X}^{-1}$, the Hessian-vector product can be computed efficiently as: $\bbb{H}\vec(\bbb{D}) =(\bbb{X}^{-1} \otimes  \bbb{X}^{-1}) \vec{(\bbb{D})} = \vec( \bbb{X}^{-1} \bbb{D} \bbb{X}^{-1} )$, which only involves matrix-matrix computation. For the second issue, (\ref{eq:newton:dir}) is, in fact, a unconstrained quadratic program with $n^2 - |Z|$ variables. In order to deal with the variables indexed by $Z$, one can explicitly enforce the entries of $Z$ for current solution and its corresponding gradient to \textbf{0}. Therefore, the constraint $\bbb{\Delta}_Z=\textbf{0}$ can always be satisfied. Finally, linear conjugate gradient method can be used to solve (\ref{eq:newton:dir}).


We summarize our modified linear conjugate gradient method for computing the search direction in Algorithm \ref{alg:linearcg}. The algorithm involves a parameter $T_{\text{in}}$ controlling the maximum number of iterations. Empirically, we found that a value of $T_{\text{in}}=5$ usually leads to good overall efficiency.

\begin{algorithm}[!h]
\caption{ {\bf A Modified Linear Conjugate Gradient to Find the Newton Direction $\bbb{D}$ as in (\ref{eq:newton:dir}).}}
	\label{alg:linearcg}
	\begin{algorithmic}
\STATE Input: $\bbb{Y} =(\bbb{X}^t)^{-1}$, and current gradient $\bbb{G}=g(\bbb{X}^t)$, Specify the maximum iteration $T\in \mathbb{N}$
\STATE Output: Newton direction $\bbb{D}\in \mathbb{R}^{n\times n}$
	\STATE $\bbb{D} = \bbb{0}$,~$\bbb{R} = -\bbb{G} - \bbb{YDY}$, $\bbb{R}_Z = \bbb{0}$
	\STATE $\bbb{P = R}$,~$r_{old} = \langle \bbb{R},\bbb{R}\rangle$
	\FOR {$p=1$ \TO $T_{\text{in}}$}
	\STATE $\bbb{B} = \bbb{YPY}$,~$\alpha = \frac{r_{old}}{\bbb{\langle P,B\rangle}}$
	\STATE $\bbb{D} = \bbb{D} + \alpha\bbb{P}$, $\bbb{R = R} - \alpha\bbb{B}$
	\STATE $\bbb{D_Z} = \bbb{0}$, $\bbb{R_Z} = \bbb{0}$,~$r_{new} = \langle \bbb{R},~\bbb{R}\rangle$
	\STATE $\bbb{P = R} + \tfrac{r_{\text{new}}}{r_{\text{old}}}\bbb{P}$,~$r_{old} = r_{new}$
	\ENDFOR
	\end{algorithmic}
\end{algorithm}




%

\subsection{Computing the Step Size}\label{subsect:step}

Once the Newton direction $\bbb{D}$ is computed, we need to find a step size $\alpha\in (0,1]$ in order to ensure the positive definiteness of the next iterated result, i.e. $\bbb{X}+\alpha \bbb{D}$, so that a sufficient decrease of the objective function will be resulted. We use Armijo's rule and try step size $\alpha \in \{\eta^0,\eta^1,...\}$ with a constant decrease rate $0<\eta<1$ until we find the smallest $t\in \mathbb{N}$ with $\alpha=\eta^t$ such that $\bbb{X}+\alpha \bbb{D}$ is (i) positive definite, and (ii) satisfies the following sufficient decrease condition \cite{tseng2009coordinate}:
\beq \label{eq:suff:dec}
\textstyle f(\bbb{X}^t+\alpha^t \bbb{D}^t) \leq f(\bbb{X}^t) + \alpha^t \omega \la \bbb{G}^t ,\bbb{D}^t \ra,\nn
\eeq
\noi where $0<\omega<0.5$. In our experiments, we set $\eta = 0.1$ and $\omega=0.25$.

We verify positive definiteness of the solution when we compute its Cholesky factorization (taking $\frac{1}{3}n^3$ flops). We note that the Cholesky factorization dominates the computational cost in the step-size computations. To reduce the computation cost, we can reuse the Cholesky factor in the previous iteration when evaluating the objective function (that requires the computation of $\log\det(\bbb{X})$) and the gradient (that requires the computation of $\bbb{X}^{-1}$).


\section{Theoretical Analysis} \label{eq:theana}

\subsection{Convergence Analysis of Algorithm 1}

 We present the convergence results for Algorithm \ref{alg:main}, which are analogous to the results in \cite{beck2016}.
\begin{proposition}
Let ${\bbb{X}^k}$ be the sequence generated by algorithm \ref{alg:main}. Algorithm \ref{alg:main} outputs a coordinate-wise minimum point $\bbb{X}^*$ with $f(\bbb{X}^*)\leq f(\bbb{P})$ for every $\bbb{P} \in \N$, where $\N \triangleq \{\bbb{X}~|~\bbb{X}  \succ \bbb{0},~\| \bbb{X}^* - \bbb{X}\|_0 \leq 2\}$.

\end{proposition}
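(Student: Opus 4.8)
The argument follows the blueprint of \cite{beck2016} and splits into two parts: showing that Algorithm~\ref{alg:main} halts after finitely many outer iterations, and showing that its terminal iterate $\bbb{X}^*$ cannot be improved by any admissible perturbation of at most two coordinates. For the first part I would argue that whenever the counter $k$ is incremented the objective strictly decreases. In the greedy‑pursuit stage, $\bbb{X}^k$ and the point $\bbb{X}^k+\theta^*\bbb{E}_{j_k}$ attaining $f_{j_k}<f(\bbb{X}^k)$ are both feasible for the reduced convex problem (\ref{eq:subproblem}) with $S=\S(\bbb{X}^k)\cup j_k$, so its unique optimal solution $\bbb{X}^{k+1}$ obeys $f(\bbb{X}^{k+1})\le f_{j_k}<f(\bbb{X}^k)$; the same reasoning, with $\bbb{X}^k-\bbb{X}^k_{i_k}\bbb{E}_{i_k}+\theta^*\bbb{E}_{j_k}$, applies in the swap stage. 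Since $f$ is \emph{strictly} convex on the positive‑definite cone (its Hessian $\bbb{X}^{-1}\otimes\bbb{X}^{-1}$ is positive definite there), each instance of (\ref{eq:subproblem}) has a single optimal solution, so every post‑update iterate is determined by the support set passed to the solver; there are finitely many such support sets (at most $s$ off‑diagonal coordinates), and a repeat would contradict the strict monotone decrease of $\{f(\bbb{X}^k)\}$. Hence only finitely many updates occur and the algorithm exits through the final branch of the swap stage, producing $\bbb{X}^*$. One also has to rule out an idle cycle of the inner loop: $f_{j_k}=f(\bbb{X}^k)$ would force $\la g(\bbb{X}^k),\bbb{E}_j\ra=0$ for every $j\in\Z(\bbb{X}^k)$, which together with the next paragraph already pins down the iterate and ends that stage.

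For the second part I would first record the structure of $\bbb{X}^*$: it is positive definite, satisfies $\|\bbb{X}^*\|_{0,\text{off}}\le s$, and is the optimal solution of (\ref{eq:subproblem}) over $S=\S(\bbb{X}^*)$ --- produced by the last subproblem solve, or equal to $\bbb{O}^{-1}$, the minimizer of $f$ over diagonal positive‑definite matrices, if no update ever occurred. Since $\bbb{X}^*$ lies in the interior of the PD cone and $f$ is smooth and convex there, first‑order optimality gives $\la g(\bbb{X}^*),\bbb{E}_m\ra=0$ for every diagonal index $m$ and every $m\in\S(\bbb{X}^*)$; equivalently, $g(\bbb{X}^*)=\bbb{\Sigma}-(\bbb{X}^*)^{-1}$ vanishes on $\S(\bbb{X}^*)$ and on the diagonal. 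If $\|\bbb{X}^*\|_{0,\text{off}}<s$, the final greedy‑pursuit pass actually ran and reported no descent, so $\la g(\bbb{X}^*),\bbb{E}_j\ra=0$ for $j\in\Z(\bbb{X}^*)$ as well; combined with the above this forces $g(\bbb{X}^*)=\bbb{0}$, i.e. $\bbb{X}^*=\bbb{\Sigma}^{-1}$ is the global minimizer of $f$ over $\{\bbb{X}\succ\bbb{0}\}$, and $f(\bbb{X}^*)\le f(\bbb{P})$ holds trivially for every $\bbb{P}\in\N$.

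In the remaining case $\|\bbb{X}^*\|_{0,\text{off}}=s$, take any $\bbb{P}\in\N$ (where we may take $\bbb{P}$ feasible, $\|\bbb{P}\|_{0,\text{off}}\le s$, since a coordinate‑wise minimum is compared against feasible points). If $\bbb{P}$ differs from $\bbb{X}^*$ only in positions belonging to $\S(\bbb{X}^*)$ together with the diagonal, then $\bbb{P}$ is feasible for (\ref{eq:subproblem}) with $S=\S(\bbb{X}^*)$ and $f(\bbb{X}^*)\le f(\bbb{P})$ by optimality of $\bbb{X}^*$. Otherwise $\bbb{P}$ turns on some coordinate $j\notin\S(\bbb{X}^*)$, and since $\|\bbb{P}\|_{0,\text{off}}\le s=\|\bbb{X}^*\|_{0,\text{off}}$ while $\|\bbb{X}^*-\bbb{P}\|_0\le 2$, it must simultaneously turn off exactly one coordinate $i\in\S(\bbb{X}^*)$; thus $\bbb{P}=\bbb{X}^*-\bbb{X}^*_i\bbb{E}_i+\theta\bbb{E}_j$ for some $\theta$, and the certificate furnished by the final swap pass (no descent found) gives $f(\bbb{X}^*)\le\min_\theta f(\bbb{X}^*-\bbb{X}^*_i\bbb{E}_i+\theta\bbb{E}_j)\le f(\bbb{P})$. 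This covers all of $\N$.

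The step I expect to be the main obstacle is the finite‑termination argument --- precluding an idle cycle of the inner greedy loop and ensuring no support set, hence no iterate, is visited twice; both rest on strict convexity of $f$ on the PD cone (uniqueness of the reduced minimizer) and the strict monotone decrease of the objective. Two smaller points also need care: that the reduced problems (\ref{eq:subproblem}) actually attain their minima (coercivity of $f$, e.g. when $\bbb{\Sigma}\succ\bbb{0}$) and that forcing the off‑support entries of the iterate and gradient to zero inside Algorithm~\ref{alg:linearcg} genuinely returns the constrained minimizer; and, in the optimality step, the bookkeeping that any admissible two‑coordinate perturbation of the feasible point $\bbb{X}^*$ is either a move within $\S(\bbb{X}^*)\cup\{\text{diagonal}\}$ or exactly one of the swaps tested by the algorithm, keeping track of the symmetry constraint and the distinguished status of the diagonal entries.
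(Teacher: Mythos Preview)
Your argument follows essentially the same approach as the paper's: finite termination from the strict monotone decrease of $\{f(\bbb{X}^k)\}$, followed by a case analysis on how a point $\bbb{P}\in\N$ can differ from $\bbb{X}^*$, with each case dispatched by the appropriate termination certificate (optimality of $\bbb{X}^*$ over its own support set, or the no-descent condition from the final swap pass). The paper organizes the case split slightly differently --- decomposing $\N$ into $\N^0=\{\S(\bbb{P})\subseteq\S(\bbb{X}^*)\}$, $\N^1=\{\S(\bbb{P})=\S(\bbb{X}^*)\cup\{j\}\}$, and $\N^2=\{\S(\bbb{P})=(\S(\bbb{X}^*)\setminus\{i\})\cup\{j\}\}$ rather than splitting on $\|\bbb{X}^*\|_{0,\text{off}}<s$ versus $=s$ --- and it does not spell out the idle-cycle or finite-support-set arguments you raise, but the skeleton is the same.
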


\begin{proof}
 Note that it takes finite iterations for any convex optimization algorithm to produce an optimal solution with a given support set. Combining with the monotonicity of Algorithm \ref{alg:main}, we conclude that the sequence of function values $f(\bbb{X}^k)$ are monotonically decreasing and Algorithm \ref{alg:main} stops after a finite number of iterations. We define:
\beq
&& \textstyle \N^0  = \{ \bbb{P} ~|~\bbb{P}\in\N,~\S(\bbb{P}) \subseteq \S(\bbb{X}^*)\}, \nn\\
&& \textstyle \N^1 = \{\bbb{P} ~|~\bbb{P}\in\N,~\S(\bbb{P}) = \S(\bbb{X}^*) \cup \{ j\}\},\nn\\
&& \textstyle \N^2 = \{\bbb{P} ~|~\bbb{P}\in\N,~ \S(\bbb{P}) = \S(\bbb{X}^*) / \{i\}\cup \{j\} \},\nn
\eeq
\noi for all $i\in\S(\bbb{X}^*),~j \in \Z(\bbb{X}^*)$. Clearly, we have $\N = \N^0 \cup \N^1 \cup \N^2$. Now we assume that point $\bbb{X}^*$ is generated by Algorithm \ref{alg:main}.

For the case $\N^0$, $\bbb{X^*}$ is a global optimal point generated by the convex optimization subproblem on the given support set. Therefore, $f(\bbb{X}^*) \leq f(\bbb{X})$ for any $\bbb{X} \in \N^0$.

For the case $\N^2$, we notice that Algorithm \ref{alg:main} terminates only if after the swap coordinates stage. For any $i \in \S(\bbb{X^*})$ and $j \in \Z(\bbb{X^*})$, we have the following inequality:
\beq
\textstyle f_{i,j} = \min_{\theta}\{ f(\bbb{X}^k - \bbb{X}^k_{i}\mathbf{E}_i+ \theta \mathbf{E}_j)\} \geq f(\bbb{X^*}).
\notag
\eeq
\noi Therefore, we have that $\N^2 = \varnothing$, which implicates that we cannot find any swap from support set and non-support set to achieve descent on the objective value. Thus, $f(\bbb{X}^*) \leq f(\bbb{X})$ for any $\bbb{X} \in \N^2$.

For the case $\N^1$, Algorithm \ref{alg:main} must perform greedy pursuit stage before entering the swap coordinates stage. The greedy stage terminates only if for any
$j \in \Z(\bbb{X^*})$,
\beq
\textstyle f_j = \min_{\theta}\{ f(\bbb{X}^k + \theta \mathbf{E}_j)\} \geq f(\bbb{X}^*).\nn
\eeq
\noi It implies that we have selected the element that leads to greatest descent as a new member of non-zero elements when $\|\bbb{X}\|_0 \leq s$. We conclude that $f(\bbb{X}^*) \leq f(\bbb{X})$, for any $\bbb{X} \in \N^1$.

Therefore, we finish the proof of this lemma.
\end{proof}

\subsection{Convergence Analysis of Algorithm 2}


This subsection provides some convergence analysis for the proposed Newton-like optimization algorithm in Algorithm \ref{algo:newton}. We denote $\{\bbb{X}^t\}_{t=0}^{\infty}$ as the sequence generated by the algorithm and $\bbb{X}^*$ as the global optimal solution set for the convex problem in (\ref{eq:subproblem}). Throughout this subsection, we make the following assumption.

\begin{assumption}
The objective function $f(\bbb{X})$ is strongly convex with the modulus $\sigma$ and gradient Lipschitz continues with constant $L$ for all $\bbb{X}^t$ with $t=0,1,2,...,\infty$.
\end{assumption}



\noi \textbf{Remarks:} This assumption is mild and equivalent to assuming the solution is bounded since it holds that
\beq
 && \sigma \leq \bbb{\lambda}(\bbb{H}^t) \leq L \Leftrightarrow \sigma \leq \bbb{\lambda}({(\bbb{X}^t)}^{-1} \otimes {(\bbb{X}^t)}^{-1}) \leq L \nn\\
&\Leftrightarrow& \sqrt{\sigma} \leq \bbb{\lambda}({(\bbb{X}^t)}^{-1}) \leq \sqrt{L}
\Leftrightarrow 1/\sqrt{L} \leq \bbb{\lambda}(\bbb{X}^t) \leq 1/\sqrt{\sigma},\nn
\eeq
\noi where $\bbb{\lambda}(\bbb{X}) \in \mathbb{R}^n$ as the eigenvalues of $\bbb{X}$ in increasing order with $\bbb{\lambda}_1(\bbb{X})\leq \bbb{\lambda}_2(\bbb{X})\leq,...,\leq\bbb{\lambda}_n(\bbb{X})$.

The following lemma characterizes the optimality of $d(\bbb{X}^t)$. It is nearly identical to Lemma 1 in \cite{TsengY09}. For completeness, we present the proof here.

\begin{lemma} \label{lemma:optimal:d}
It holds that
\beq \label{eq:newton:suff:dec}
\|\bbb{D}^t\|_{\bbb{H}^t}^2  + \la \bbb{G}^t, \bbb{D}^t \ra  \leq 0,~\forall \bbb{D}^t~\text{with}~\bbb{X}^t +\bbb{D}^t \in \Omega.
\eeq

\begin{proof}
Noticing $\bbb{D}^t \triangleq d(\bbb{X}^t)$ is the minimizer of (\ref{eq:newton:direction}), we have:
\beq
q(\bbb{D}^t;{\bbb{X}^t}) + p(\bbb{X}^t+\bbb{D}^t) \leq q(\bbb{Z};{\bbb{X}^t}) + p(\bbb{X}^t+\bbb{Z}),~\forall \bbb{Z}. \nn
\eeq
\noi Letting $\bbb{Z} = \alpha \bbb{D}^t$ where $\alpha$ is any constant with $\alpha\in[0,1]$, we obtain:
\begin{align*}
&~~\la \bbb{G}^t, \bbb{D}^t \ra + \tfrac{1}{2}\|\bbb{D}^t\|^2_{\bbb{H}^t} + p(\bbb{X}^t + \bbb{D}^t)\nn\\
\leq&~~ \la \bbb{G}^t, \alpha \bbb{D}^t \ra + \tfrac{1}{2}\|\alpha \bbb{D}^t\|^2_{\bbb{H}^t} + p(\bbb{X}^t + \alpha \bbb{D}^t ) \nn\\
\leq&~~ \la \bbb{G}^t, \alpha \bbb{D}^t \ra + \tfrac{1}{2}\|\alpha \bbb{D}^t\|^2_{\bbb{H}^t} + \alpha p(\bbb{X}^t + \bbb{D}^t ) + (1-\alpha) p(\bbb{X}^t),\nn
\end{align*}
\noi where the last inequality uses the convexity of $p(\cdot)$. Rearranging terms yields:
\beq
(1-\alpha) (\la \bbb{G}^t, \bbb{D}^t \ra +  p(\bbb{X}^t + \bbb{D}^t ) - p(\bbb{X}^t))     \leq \tfrac{\alpha^2-1}{2}\|\bbb{D}^t\|^2_{\bbb{H}^t}\nn\\
\la \bbb{G}^t, \bbb{D}^t \ra +  p(\bbb{X}^t + \bbb{D}^t ) - p(\bbb{X}^t)    \leq -\tfrac{1+\alpha}{2}\|\bbb{D}^t\|^2_{\bbb{H}^t}.\nn~~~~~
\eeq
\noi Since $\bbb{X}^t\in\Omega,~\bbb{X}^t + \bbb{D}^t \in \Omega$, we have $p(\bbb{X}^t + \bbb{D}^t ) = p(\bbb{X}^t)=0$. Letting $\alpha=1$, we obtain (\ref{eq:newton:suff:dec}).

\end{proof}

\end{lemma}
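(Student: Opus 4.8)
The plan is to lean entirely on the variational characterization of $\bbb{D}^t = d(\bbb{X}^t)$ as the \emph{global} minimizer of the composite quadratic model $\bbb{\Delta}\mapsto q(\bbb{\Delta};\bbb{X}^t) + p(\bbb{X}^t+\bbb{\Delta})$ in (\ref{eq:newton:direction}); this is the classical descent-lemma trick for proximal--Newton / coordinate-gradient methods and, as the paper notes, it mirrors Lemma~1 of \cite{TsengY09}. First I would use optimality of $\bbb{D}^t$ against the single competitor $\bbb{Z} = \alpha\bbb{D}^t$ for an arbitrary $\alpha\in[0,1]$, obtaining $q(\bbb{D}^t;\bbb{X}^t) + p(\bbb{X}^t+\bbb{D}^t) \le q(\alpha\bbb{D}^t;\bbb{X}^t) + p(\bbb{X}^t+\alpha\bbb{D}^t)$. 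Substituting the explicit quadratic form $q(\bbb{\Delta};\bbb{X}^t) = f^t + \la\bbb{G}^t,\bbb{\Delta}\ra + \tfrac12\|\bbb{\Delta}\|^2_{\bbb{H}^t}$ and cancelling the constant $f^t$ turns this into an inequality between $\la\bbb{G}^t,\bbb{D}^t\ra + \tfrac12\|\bbb{D}^t\|^2_{\bbb{H}^t} + p(\bbb{X}^t+\bbb{D}^t)$ and $\alpha\la\bbb{G}^t,\bbb{D}^t\ra + \tfrac{\alpha^2}{2}\|\bbb{D}^t\|^2_{\bbb{H}^t} + p(\bbb{X}^t+\alpha\bbb{D}^t)$.

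Next I would bound the last term on the right. Since $p = I_{\Omega}$ is the indicator of the linear subspace $\Omega = \{\bbb{X}\mid\bbb{X}_Z=\bbb{0}\}$, it is convex, so writing $\bbb{X}^t+\alpha\bbb{D}^t = \alpha(\bbb{X}^t+\bbb{D}^t) + (1-\alpha)\bbb{X}^t$ and applying convexity gives $p(\bbb{X}^t+\alpha\bbb{D}^t) \le \alpha\,p(\bbb{X}^t+\bbb{D}^t) + (1-\alpha)\,p(\bbb{X}^t)$. Plugging this in and grouping the terms carrying the factor $(1-\alpha)$ (the linear and $p$ pieces) and $(1-\alpha^2)$ (the quadratic piece), the inequality rearranges to $(1-\alpha)\bigl(\la\bbb{G}^t,\bbb{D}^t\ra + p(\bbb{X}^t+\bbb{D}^t) - p(\bbb{X}^t)\bigr) \le \tfrac{\alpha^2-1}{2}\|\bbb{D}^t\|^2_{\bbb{H}^t}$. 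Dividing through by $(1-\alpha) > 0$ for $\alpha\in[0,1)$ yields $\la\bbb{G}^t,\bbb{D}^t\ra + p(\bbb{X}^t+\bbb{D}^t) - p(\bbb{X}^t) \le -\tfrac{1+\alpha}{2}\|\bbb{D}^t\|^2_{\bbb{H}^t}$, an estimate valid for every $\alpha\in[0,1)$.

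Finally I would invoke feasibility: the initial iterate of Algorithm~\ref{algo:newton} satisfies $\bbb{X}^0\in\Omega$, and since the direction returned by Algorithm~\ref{alg:linearcg} has $\bbb{D}_Z=\bbb{0}$, the iterate $\bbb{X}^t$ and the point $\bbb{X}^t+\bbb{D}^t$ stay in $\Omega$; hence $p(\bbb{X}^t) = p(\bbb{X}^t+\bbb{D}^t) = 0$. Letting $\alpha\uparrow 1$ (equivalently, using continuity in $\alpha$ of the right-hand side, or simply observing the displayed bound extends to $\alpha=1$) collapses the inequality to $\la\bbb{G}^t,\bbb{D}^t\ra \le -\|\bbb{D}^t\|^2_{\bbb{H}^t}$, which is exactly (\ref{eq:newton:suff:dec}).

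I do not anticipate a real obstacle: the only points needing a little care are (i) checking that $p$ really is convex, which is immediate because $\Omega$ is a subspace, so the Jensen-type upper bound on $p(\bbb{X}^t+\alpha\bbb{D}^t)$ is legitimate; and (ii) the division by $(1-\alpha)$, which is why one keeps $\alpha$ strictly below $1$ throughout and passes to the limit (or appeals to continuity) only at the end rather than setting $\alpha=1$ prematurely.
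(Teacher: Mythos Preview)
Your proposal is correct and follows essentially the same route as the paper's own proof: compare $\bbb{D}^t$ against the competitor $\alpha\bbb{D}^t$, apply convexity of $p$ to bound $p(\bbb{X}^t+\alpha\bbb{D}^t)$, rearrange into the factor $(1-\alpha)$ versus $(1-\alpha^2)$, divide, use feasibility to kill the $p$ terms, and send $\alpha\to 1$. You are in fact slightly more careful than the paper in noting that the division by $(1-\alpha)$ requires $\alpha<1$ and then passing to the limit, whereas the paper simply writes ``letting $\alpha=1$'' after having divided.
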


\begin{theorem}
\label{theorem:conv}
(Global Convergence). We have the following results: (i) There exists a strictly positive constant $\forall t,~\alpha^t \leq \min(1,1/(\sqrt{L} C_1)-\epsilon,C_2)$ such that the positive definiteness and sufficient descent conditions (refer to step \ref{alg:step:linesearch:end}-\ref{alg:step:linesearch:end2} of Algorithm \ref{algo:newton}) are satisfied. Here $\epsilon$ denotes a sufficient small positive constant. $C_1 \triangleq \bbb{\lambda}_n(\bbb{\Sigma})/\sigma + \sigma^{-3/2}$ and $C_2 \triangleq {2\sigma (1-\omega)}/{L}$ are some constants which are independent of the current solution $\bbb{X}^t$. (ii) The sequence $f(\bbb{X}^t)$ is non-increasing and any cluster point of the sequence $\bbb{X}^t$ is the global optimal solution of (\ref{eq:general:op}).


\begin{proof}

\noi (i) First, we focus on the positive definiteness condition. We now bound $\lambda_{n}(\bbb{D}^t)$. By Lemma \ref{lemma:optimal:d}, we have $\forall \bbb{D}^t~\text{with}~\bbb{X}^t +\bbb{D}^t \in \Omega$:
\beq \label{eq:quad:D}
0 &\geq&  \la \bbb{D}^t, \bbb{G}^t \ra + \|\bbb{D}^t\|_{\bbb{H}^t}^2 \nn\\
&\geq& \textstyle -\bbb{\lambda}_n(\bbb{D}^t) \bbb{\lambda}_n(\bbb{G}^t) + \sigma\|\bbb{D}^t\|_{\text{F}}^2 \nn \\
&=& \textstyle -\bbb{\lambda}_n(\bbb{D}^t) \bbb{\lambda}_n(\bbb{\Sigma}  -  (\bbb{X}^t)^{-1}) + \sigma\|\bbb{D}^t\|_{\text{F}}^2 \nn \\
&\geq& \textstyle -\bbb{\lambda}_n(\bbb{D}^t) \cdot (  \bbb{\lambda}_n(\bbb{\Sigma}) + 1/\sqrt{\sigma})  + \sigma (\bbb{\lambda}_n(\bbb{D}^t))^2,~~~
\eeq
\noi where the second step uses the fact that $\bbb{\lambda}(\bbb{H}^t) \geq \sigma$ and the inequality that $\la \bbb{A},\bbb{B} \ra \geq -\bbb{\lambda}_n(\bbb{A})\bbb{\lambda}_n(\bbb{B}),~\forall \bbb{A},~\bbb{B}$; the third step uses the definition of $\bbb{G}^t=\bbb{\Sigma}-(\bbb{X}^t)^{-1}$; the last step uses the inequalities that $\bbb{\lambda}_n(\bbb{\Sigma} - (\bbb{X}^t)^{-1}) \leq \bbb{\lambda}_n(\bbb{\Sigma}) + \bbb{\lambda}_n((\bbb{X}^t)^{-1}) \leq \bbb{\lambda}_n(\bbb{\Sigma}) + 1/\sqrt{\sigma}$. Solving the quadratic inequality in (\ref{eq:quad:D}) gives $\bbb{\lambda}_n(\bbb{D}) \leq (  \bbb{\lambda}_n(\bbb{\Sigma}) + 1/\sqrt{\sigma})/\sigma =   \bbb{\lambda}_n(\bbb{\Sigma})/\sigma + \sigma^{-3/2} \triangleq C_1$. Therefore, we have:
\beq
0 \prec  (1/\sqrt{L}-{C_1 \alpha^t})  \bbb{I} \preceq \bbb{X}^t - \alpha^t \bbb{\lambda}_n(\bbb{D}^t) \bbb{I} \preceq \bbb{X}^t + \alpha^t \bbb{D}^t. \nn
\eeq
\noi where the first steps uses the fact that $\alpha^t \leq  1/(\sqrt{L} C_1)-\epsilon$; the second step uses $\bbb{X}^t \succeq (1/\sqrt{L}) \cdot \bbb{I}$ and $\bbb{\lambda}_n(\bbb{D}) \leq C_1$; the last step uses $\lambda_{n}(\bbb{D}^t) \bbb{I} \succeq -\bbb{D}^t$.

 Second, we focus on the sufficient decrease condition. For any $\alpha \in (0,1]$, we have:
\beq \label{eq:suff:dec:theorem}
&& \textstyle  f(\bbb{X}^t+\alpha^t \bbb{D}^t) - f(\bbb{X}^t)   \nn\\
& \leq& \textstyle \textstyle \alpha \la \bbb{D}^t, \bbb{G}^t \ra +  \frac{(\alpha^t)^2 L}{2}\|\bbb{D}^t\|_{\text{F}}^2 \nn\\
\textstyle  &\leq& \textstyle \alpha^t \left(\la \bbb{D}^t, \bbb{G}^t \ra +  \frac{\alpha^t   L}{2\sigma } \|\bbb{D}\|_{\bbb{H}^t}^2\right)\nn\\
 \textstyle &\leq& \textstyle \alpha^t \left(\la \bbb{D}^t, \bbb{G}^t \ra -  \frac{\alpha^t  L}{2\sigma } \la \bbb{D}^t, \bbb{G}^t \ra\right)   \nn\\
 \textstyle &=& \textstyle  \alpha^t \la \bbb{D}^t, \bbb{G}^t \ra ( 1 -  \frac{\alpha^t   L}{2\sigma})  \leq \textstyle \alpha^t \la \bbb{D}^t, \bbb{G}^t \ra \cdot \omega,
\eeq
\noi where the first step uses the $L$-Lipschitz continuity of the gradient of $f(\bbb{X})$ that: $\forall \bbb{X},\bbb{Y}\in \Omega,~f(\bbb{Y}) \leq f(\bbb{X})+ \la g(\bbb{X}), \bbb{Y}-\bbb{X}\ra + \frac{L}{2} \|\bbb{X}-\bbb{Y}\|_{\text{F}}^2$; the second step uses the lower bound of the Hessian matrix that $\sigma\|\bbb{D}^t\|_{\text{F}}^2 \leq \|\bbb{D}^t\|_{\bbb{H}^t}^2$; the third step uses (\ref{eq:newton:suff:dec}) that $\|\bbb{D}^t\|_{\bbb{H}^t}^2\leq - \la \bbb{D}^t,\bbb{G}^t\ra$; the last step uses the choice that $\alpha^t \leq {2\sigma (1-\omega)}/{L} \triangleq  C_2$.

Combining the positive definiteness condition, sufficient decrease condition and the fact that $\alpha \in (0,1]$, we finish the proof of the first part of this lemma.

(ii) From (\ref{eq:suff:dec:theorem}) and (\ref{eq:newton:suff:dec}), we have:
\begin{align} \label{eq:conv}
\forall t,~&~f(\bbb{X}^{t+1})  - f(\bbb{X}^{t})\nn\\
\leq& ~\alpha^t  \omega  \la \bbb{D}^t, \bbb{G}^t \ra \leq  - \alpha^t \omega \|\bbb{D}^t\|_{\bbb{H}^t}^2 \nn \\
\leq& ~- \sigma \alpha^t \omega  \|\bbb{D}^t\|_{\text{F}}^2 = - \nu \|\bbb{D}^t\|_{\text{F}}^2,~\text{with} ~\nu \triangleq \sigma \alpha^t \omega >0.
\end{align}
\noi Therefore, the sequence $f(\bbb{X}^t)$ is non-increasing. Summing the inequality in (\ref{eq:conv}) over $i=0,1,...,t-1$ and using the fact that $f(\bbb{X}^*)\leq f(\bbb{X}^{t})$, we have:
\beq
&&f(\bbb{X}^{t}) - f(\bbb{X}^0) \leq - \nu \textstyle\sum_{i=0}^{t-1} \|\bbb{D}^{i}\|_{\text{F}}^2 \nn\\ &\Rightarrow& f(\bbb{X}^{*}) - f(\bbb{X}^0) \leq - \nu \textstyle\sum_{i=0}^{t-1} \|\bbb{D}^{i}\|_{\text{F}}^2\nn\\
 &\Rightarrow& (f(\bbb{X}^0) -f(\bbb{X}^{*})) /  (t\nu)  \geq \min_{i=0,1,...,t-1} \|\bbb{D}^{i}\|_{\text{F}}^2.\nn
\eeq
\noi As $t\rightarrow \infty$, we have $\bbb{D}^t \rightarrow 0$. We further derive the following results: $\bbb{D}^t=\bbb{0}\Rightarrow (\nabla q(\bbb{\bbb{D}}^t))_S = \bbb{0} \Rightarrow  (\bbb{H}^t \circ\bbb{\bbb{D}}^t + \bbb{G}^t )_S = \bbb{0} \Rightarrow  \bbb{G}^t_S = - (\bbb{H}^t \circ \bbb{\Delta})_S = \bbb{0}$. Based on the fact that $\bbb{X}^t \succ 0$, $\bbb{X}^t_Z=\bbb{0}$, and $\bbb{G}^t_S =0$, we conclude that $\bbb{X}^t$ is the global optimal solution for the convex optimization problem. Therefore, any cluster point of the sequence $\bbb{X}^t$ is the global optimal solution.

\end{proof}
\end{theorem}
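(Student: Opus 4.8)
The plan is to prove the two claims separately, each built on Lemma \ref{lemma:optimal:d} together with Assumption 1 (equivalently, the spectral bounds $1/\sqrt{L}\le\bbb{\lambda}(\bbb{X}^t)\le 1/\sqrt{\sigma}$ recorded in the remark). For part (i) I would verify the two step-size requirements — positive definiteness of $\bbb{X}^t+\alpha^t\bbb{D}^t$ and the Armijo sufficient-decrease inequality — by exhibiting an explicit threshold on $\alpha^t$ for each. To handle positive definiteness, I would first obtain a uniform upper bound on the top eigenvalue $\bbb{\lambda}_n(\bbb{D}^t)$ of the Newton direction: starting from $\|\bbb{D}^t\|_{\bbb{H}^t}^2+\la\bbb{G}^t,\bbb{D}^t\ra\le 0$, lower-bound the quadratic term by $\sigma\|\bbb{D}^t\|_{\text{F}}^2\ge\sigma(\bbb{\lambda}_n(\bbb{D}^t))^2$ via strong convexity, lower-bound the linear term by $-\bbb{\lambda}_n(\bbb{D}^t)\bbb{\lambda}_n(\bbb{G}^t)$, and bound $\bbb{\lambda}_n(\bbb{G}^t)=\bbb{\lambda}_n(\bbb{\Sigma}-(\bbb{X}^t)^{-1})\le\bbb{\lambda}_n(\bbb{\Sigma})+1/\sqrt{\sigma}$. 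Solving the resulting scalar quadratic inequality gives $\bbb{\lambda}_n(\bbb{D}^t)\le C_1$; then, since $\bbb{X}^t\succeq(1/\sqrt{L})\bbb{I}$, choosing $\alpha^t\le 1/(\sqrt{L}C_1)-\epsilon$ forces $\bbb{X}^t+\alpha^t\bbb{D}^t\succ 0$.

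Still within part (i), for the sufficient-decrease inequality I would invoke the descent lemma from $L$-smoothness, $f(\bbb{X}^t+\alpha^t\bbb{D}^t)-f(\bbb{X}^t)\le\alpha^t\la\bbb{D}^t,\bbb{G}^t\ra+\tfrac{(\alpha^t)^2 L}{2}\|\bbb{D}^t\|_{\text{F}}^2$, then substitute $\|\bbb{D}^t\|_{\text{F}}^2\le\tfrac{1}{\sigma}\|\bbb{D}^t\|_{\bbb{H}^t}^2$ and use Lemma \ref{lemma:optimal:d} again in the form $\|\bbb{D}^t\|_{\bbb{H}^t}^2\le-\la\bbb{D}^t,\bbb{G}^t\ra$. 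The bound collapses to $\alpha^t\la\bbb{D}^t,\bbb{G}^t\ra(1-\tfrac{\alpha^t L}{2\sigma})$, which is at most $\omega\alpha^t\la\bbb{D}^t,\bbb{G}^t\ra$ once $\alpha^t\le 2\sigma(1-\omega)/L=C_2$. Hence $\alpha^t\le\min(1,1/(\sqrt{L}C_1)-\epsilon,C_2)$ satisfies both conditions, and all three quantities are strictly positive and independent of $\bbb{X}^t$.

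For part (ii), monotonicity of $f(\bbb{X}^t)$ is immediate from the sufficient-decrease condition. To obtain optimality of cluster points I would chain the inequalities of part (i) into $f(\bbb{X}^{t+1})-f(\bbb{X}^t)\le-\nu\|\bbb{D}^t\|_{\text{F}}^2$ with a fixed $\nu=\sigma\alpha^t\omega>0$, sum over $t$, and use $f(\bbb{X}^*)\le f(\bbb{X}^t)$ to conclude $\|\bbb{D}^t\|_{\text{F}}\to 0$. Finally I would translate $\bbb{D}^t=\bbb{0}$ into the first-order optimality condition of the quadratic subproblem restricted to the support, $(\bbb{H}^t\circ\bbb{D}^t+\bbb{G}^t)_S=\bbb{0}$, which forces $\bbb{G}^t_S=\bbb{0}$; combined with $\bbb{X}^t\succ 0$ and $\bbb{X}^t_Z=\bbb{0}$ this is exactly the KKT system for the convex problem (\ref{eq:general:op}), so every cluster point is a global minimizer.

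The main obstacle I anticipate is the eigenvalue bound on the search direction: passing cleanly from the matrix inequality of Lemma \ref{lemma:optimal:d} to a solution-independent scalar quadratic inequality in $\bbb{\lambda}_n(\bbb{D}^t)$ requires care about the direction of the inner-product estimate $\la\bbb{D}^t,\bbb{G}^t\ra\ge-\bbb{\lambda}_n(\bbb{D}^t)\bbb{\lambda}_n(\bbb{G}^t)$ and about making the curvature term $\sigma\|\bbb{D}^t\|_{\text{F}}^2$ dominate. Once the spectral bounds on $\bbb{X}^t$ are in hand, the rest is bookkeeping with the constants $C_1$ and $C_2$.
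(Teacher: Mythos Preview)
Your proposal is correct and follows essentially the same route as the paper's proof: the same eigenvalue bound $\bbb{\lambda}_n(\bbb{D}^t)\le C_1$ derived from Lemma~\ref{lemma:optimal:d} for positive definiteness, the same descent-lemma/strong-convexity chain for the Armijo condition giving the threshold $C_2$, and the same telescoping-plus-KKT argument for part (ii). The delicate step you single out---the inner-product estimate $\la\bbb{D}^t,\bbb{G}^t\ra\ge-\bbb{\lambda}_n(\bbb{D}^t)\bbb{\lambda}_n(\bbb{G}^t)$---is precisely the inequality the paper invokes at that point as well.
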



\textbf{Remarks:} Due to the strongly convexity and gradient Lipschitz continuity of the objective function, there always exists a strictly positive step size $\alpha^t$ such that both sufficient decrease condition and positive definite condition can be satisfied for all $t$. This is very crucial for the global convergence of Algorithm \ref{algo:newton}.

We now prove the global linear convergence rate of Algorithm \ref{algo:newton}. The following lemma characterizes the relation between $d(\bbb{X}^t)$ and $\bbb{X}^*$ for any $\bbb{X}^t$.

\begin{lemma} \label{eq:error:bound}
If $\bbb{X}^t$ is not the global optimal solution of (\ref{eq:general:op}), there exists a constant $\eta \in (0,\infty)$ such that $\|\bbb{X}^t - \bbb{X}^*\|_{\text{F}} \leq \eta \|d(\bbb{X}^t)\|_{\text{F}}$.
\begin{proof}

First, we prove that $d(\bbb{X}^t)\neq \bbb{0}$. This can be achieved by contradiction. Assuming that $d(\bbb{X}^t)=\bbb{0}$, we obtain: $\bbb{X}^{t+1} = \bbb{X}^t + \alpha^t d(\bbb{X}^t)$, which implies that $\bbb{X}^{t+1}=\bbb{X}^t$ is the stationary point. Since (\ref{eq:general:op}) is a strongly convex optimization problem, we have $\bbb{X}^t=\bbb{X}^*$, which contradicts with the condition that $\bbb{X}^t$ is not the optimal solution. Combining with the boundedness of $\bbb{X}^t$ and $\bbb{X}^*$, we conclude that there exists a sufficiently large constant $\eta \in (0,\infty)$ such that $\|\bbb{X}^t - \bbb{X}^*\|_{\text{F}} \leq \eta \|d(\bbb{X}^t)\|_{\text{F}}$.

\end{proof}
\end{lemma}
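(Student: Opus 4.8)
The plan is to bypass a soft ``boundedness and continuity'' argument and instead extract the \emph{explicit} constant $\eta=L/\sigma$ directly from two first-order optimality conditions: the one characterizing $\bbb{X}^*$ and the one characterizing the direction $\bbb{D}^t=d(\bbb{X}^t)$. Recall that $\Omega$ is the linear subspace $\{\bbb{X}:\bbb{X}_Z=\bbb{0}\}$. Since, by Assumption 1, $\bbb{X}^*$ lies in the interior of the positive-definite cone, it is an unconstrained minimizer of $f$ along $\Omega$, so its gradient is orthogonal to $\Omega$: $(g(\bbb{X}^*))_S=\bbb{0}$ with $\bbb{X}^*_Z=\bbb{0}$. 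Likewise, since $\bbb{X}^t\in\Omega$, the direction $\bbb{D}^t$ minimizes $\la\bbb{G}^t,\bbb{\Delta}\ra+\tfrac{1}{2}\|\bbb{\Delta}\|_{\bbb{H}^t}^2$ over the subspace $\{\bbb{\Delta}:\bbb{\Delta}_Z=\bbb{0}\}$, which gives $(\bbb{G}^t+\bbb{H}^t\circ\bbb{D}^t)_S=\bbb{0}$ and $\bbb{D}^t_Z=\bbb{0}$ --- i.e.\ $(\bbb{G}^t)_S=-(\bbb{H}^t\circ\bbb{D}^t)_S$, exactly the identity already used in the proof of Theorem \ref{theorem:conv}.

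I would then feed these into the strong-monotonicity inequality. The ``matrix interval'' $\{\bbb{X}:\tfrac{1}{\sqrt L}\bbb{I}\preceq\bbb{X}\preceq\tfrac{1}{\sqrt\sigma}\bbb{I}\}$ is convex and, by Assumption 1 and its remark, contains $\bbb{X}^t$ and $\bbb{X}^*$, hence the whole segment between them; on that segment $f$ is $\sigma$-strongly convex, so $\la g(\bbb{X}^t)-g(\bbb{X}^*),\,\bbb{X}^t-\bbb{X}^*\ra\geq\sigma\|\bbb{X}^t-\bbb{X}^*\|_{\text{F}}^2$. Because $\bbb{X}^t-\bbb{X}^*\in\Omega$, the coordinates indexed by $Z$ drop out of this inner product, so using $(g(\bbb{X}^*))_S=\bbb{0}$ followed by $(\bbb{G}^t)_S=-(\bbb{H}^t\circ\bbb{D}^t)_S$ its left-hand side equals $\la\bbb{G}^t,\bbb{X}^t-\bbb{X}^*\ra=-\la\bbb{H}^t\circ\bbb{D}^t,\,\bbb{X}^t-\bbb{X}^*\ra$. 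Bounding this by Cauchy--Schwarz together with $\|\bbb{H}^t\circ\bbb{D}^t\|_{\text{F}}=\|\bbb{H}^t\vec(\bbb{D}^t)\|_2\leq\bbb{\lambda}_n(\bbb{H}^t)\|\bbb{D}^t\|_{\text{F}}\leq L\|\bbb{D}^t\|_{\text{F}}$ gives $\sigma\|\bbb{X}^t-\bbb{X}^*\|_{\text{F}}^2\leq L\|\bbb{D}^t\|_{\text{F}}\,\|\bbb{X}^t-\bbb{X}^*\|_{\text{F}}$. Since $\bbb{X}^t$ is not optimal, $\|\bbb{X}^t-\bbb{X}^*\|_{\text{F}}>0$, and dividing through yields $\|\bbb{X}^t-\bbb{X}^*\|_{\text{F}}\leq(L/\sigma)\|\bbb{D}^t\|_{\text{F}}$, which also forces $\bbb{D}^t\neq\bbb{0}$.

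Most of this is routine (two stationarity conditions plus one Cauchy--Schwarz estimate); the only delicate points are the subspace bookkeeping --- the stationarity conditions constrain only the $S$-block while $\bbb{X}^t-\bbb{X}^*$ lives entirely in the $S$-block, so every inner product collapses to its $S$-part --- and the justification that $\bbb{X}^*$ is interior to $\{\bbb{X}\succ\bbb{0}\}$ so that $(g(\bbb{X}^*))_S=\bbb{0}$ is valid. The latter follows since the iterates are uniformly bounded away from singular by Assumption 1 and the subproblem, being strongly convex, has the unique minimizer $\bbb{X}^*$, which therefore inherits the spectral bounds in the limit. The authors' route --- prove $d(\bbb{X}^t)\neq\bbb{0}$ by contradiction and then invoke boundedness of $\bbb{X}^t,\bbb{X}^*$ --- also works, but leaves the \emph{uniformity} of $\eta$ implicit, whereas the computation above produces $\eta=L/\sigma$ with no extra work.
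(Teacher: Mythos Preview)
Your proof is correct and takes a genuinely different route from the paper. The paper argues in two steps: (i) if $d(\bbb{X}^t)=\bbb{0}$ then $\bbb{X}^t$ would already be stationary, hence optimal by strong convexity, a contradiction; (ii) since $\|\bbb{X}^t-\bbb{X}^*\|_{\text{F}}$ is finite and $\|d(\bbb{X}^t)\|_{\text{F}}>0$, a finite ratio $\eta$ exists. That argument, taken literally, only produces a constant depending on $t$; the uniformity in $t$ required by Theorem~2 is left implicit. Your approach instead couples the two stationarity conditions $(g(\bbb{X}^*))_S=\bbb{0}$ and $(\bbb{G}^t+\bbb{H}^t\circ\bbb{D}^t)_S=\bbb{0}$ through the strong-monotonicity inequality on the segment $[\bbb{X}^t,\bbb{X}^*]$, and after one Cauchy--Schwarz step lands on the explicit, iteration-independent bound $\eta=L/\sigma$. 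The subspace bookkeeping (only the $S$-block survives in every inner product) is exactly what makes the two optimality conditions combine cleanly, and the fact that $\bbb{X}^*$ inherits the spectral box $[1/\sqrt{L},1/\sqrt{\sigma}]$ as a limit of the iterates justifies applying Assumption~1 on the whole segment. The payoff is a quantitative and uniform constant rather than an existential one, which is precisely what the downstream linear-rate argument needs; the paper's version is shorter but weaker.
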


\begin{theorem}

(Global Linear Convergence Rate). The sequence $\bbb{X}^t$ converges to the global optimal solution linearly.

\begin{proof}

\noi We assume that there exists a point $\bar{\bbb{X}}$ lying on the segment joining $\bbb{X}^{t+1}$ with $\bbb{X}^*$. Using the fact that $\bbb{X}^{t+1}\in\Omega,~\bbb{X}^{*}\in\Omega$ and the Mean Value Theorem, we derive the following results:
\beq\label{eq:linear:conv}
&&F(\bbb{X}^{t+1}) - F(\bbb{X}^*)=f(\bbb{X}^{t+1}) - f(\bbb{X}^*)\nn\\
&=& \la g(\bar{\bbb{X}}) , \bbb{X}^{t+1}-\bbb{X}^*\ra  \nn\\
&=&  \la \bbb{G}^t +  \bbb{H}^t \circ \bbb{D}^t    ,\bbb{X}^{t+1}-\bbb{X}^* \ra \nn\\
&&+ \la g(\bar{\bbb{X}}) - \bbb{G}^t -   \bbb{H}^t\circ \bbb{D}^t , \bbb{X}^{t+1}-\bbb{X}^*\ra.
\eeq

(i) We now consider to bound the first term in (\ref{eq:linear:conv}).
Since $\bbb{D}^t$ is the optimal solution of (\ref{eq:newton:direction}), we have
\beq \label{eq:linear:conv:1}
\bbb{0} \in \bbb{G}^t +  \bbb{H}^t \circ \bbb{D}^t  + \partial p(\bbb{X}^t+\bbb{D}^t).
\eeq
\noi Due to the convexity of $p(\cdot)$, we obtain: $\la \bbb{X}-\bbb{Z},~ \partial p(\bbb{Z})\ra \leq p(\bbb{X})-p(\bbb{Z})$. Letting $\bbb{X}=\bbb{X}^*$ and $\bbb{Z}=\bbb{X}^t+\bbb{D}^t$, we have:
\beq \label{eq:linear:conv:2}
\la \bbb{X}^*-\bbb{X}^t-\bbb{D}^t,~ \partial p(\bbb{X}^t+\bbb{D}^t)\ra \leq p(\bbb{X}^*)-p(\bbb{X}^t+\bbb{D}^t).
\eeq

\noi Combing (\ref{eq:linear:conv:1}) and (\ref{eq:linear:conv:2}), we have: $\la \bbb{X}^*-\bbb{X}^t-\bbb{D}^t,~ -\bbb{G}^t - \circ\bbb{H}^t \circ \bbb{D}^t \ra \leq p(\bbb{X}^*)-p(\bbb{X}^t+\bbb{D}^t)$. Noticing $\bbb{X}^*\in \Omega$ and $\bbb{X}^t+\bbb{D}^t\in \Omega$, we have $p(\bbb{X}^*)=p(\bbb{X}^t+\bbb{D}^t)=0$. We reach the following inequality:
\beq
\la \bbb{X}^t+\bbb{D}^t-\bbb{X}^* ,~ \bbb{G}^t +  \bbb{H}^t \circ \bbb{D}^t \ra \leq 0.\nn
\eeq

\noi Therefore, the first term in (\ref{eq:linear:conv}) is bounded by:
\beq \label{eq:linear:conv:0}
&&\la \bbb{G}^t + \bbb{H}^t \circ \bbb{D}^t  ,\bbb{X}^{t+1}-\bbb{X}^* \ra \nn\\
&=&   \la \bbb{G}^t + \bbb{H}^t \circ \bbb{D}^t ,(\alpha^t-1) \bbb{D}^t +\bbb{X}^t+\bbb{D}^t-\bbb{X}^* \ra\nn\\
&\leq &   \la \bbb{G}^t +\bbb{H}^t \circ \bbb{D}^t , (\alpha^t-1)\bbb{D}^t \ra \nn \\
&\leq &(\alpha^t-1)   \la \bbb{G}^t,\bbb{D}^t \ra +0\nn\\
&\leq & \tfrac{1-\alpha^t}{\alpha^t \omega} (f(\bbb{X}^{t}) - f(\bbb{X}^{t+1})).
\eeq

(ii) We now consider to bound the second term in (\ref{eq:linear:conv}). We derive the following results:
\begin{align} \label{eq:linear:conv:00}
&~\la g(\bar{\bbb{X}}) - \bbb{G}^t -  \bbb{H}^t \circ \bbb{D}^t , \bbb{X}^{t+1}-\bbb{X}^*\ra \nn\\
 \leq&~ \|\bbb{X}^{t+1}-\bbb{X}^*\|_{\text{F}} \| (\|g(\bar{\bbb{X}}) - \bbb{G}^t\|_{\text{F}} + \|\bbb{H}^t \circ \bbb{D}^t\|_{\text{F}})    \nn\\
 \leq&~ \|\bbb{X}^{t+1}-\bbb{X}^*\|_{\text{F}} (L\|\bar{\bbb{X}} - \bbb{X}^t\|_{\text{F}}+\|\bbb{H}^t \circ \bbb{D}^t\|_{\text{F}})    \nn\\
 \leq&~ (\eta+\alpha^t) \|\bbb{D}^t\|_{\text{F}} (L\|\bar{\bbb{X}} - \bbb{X}^t\|_{\text{F}}+\|\bbb{H}^t \circ \bbb{D}^t\|_{\text{F}})     \nn\\
\leq & ~L (\eta+\alpha^t) \|\bbb{D}^t\|_{\text{F}}(\|\bar{\bbb{X}} - \bbb{X}^t\|_{\text{F}}+\|\bbb{D}^t\|_{\text{F}})     \nn\\
\leq & ~L  (\eta+\alpha^t)(\| \bar{\bbb{X}} - \bbb{X}^{*}\|_{\text{F}}+\| \bbb{X}^{*} - \bbb{X}^t\|_{\text{F}}+\|\bbb{D}^t\|_{\text{F}})   \|\bbb{D}^t\|_{\text{F}}  \nn\\
\leq & ~L (\eta+\alpha^t)(\|\bbb{X}^{t+1}-\bbb{X}^* \|_{\text{F}}+ \eta \|\bbb{D}^t\|_{\text{F}}+\|\bbb{D}^t\|_{\text{F}})   \|\bbb{D}^t\|_{\text{F}}   \nn\\
\leq & ~L (\eta+\alpha^t)((\eta+\alpha^t)\|\bbb{D}^t\|_{\text{F}} + \eta \|\bbb{D}^t\|_{\text{F}} +\|\bbb{D}^t\|_{\text{F}})  \|\bbb{D}^t\|_{\text{F}}   \nn\\
= &~ L(\eta+\alpha^t)(2\eta+\alpha^t+1) \|\bbb{D}^t\|^2_{\text{F}}   \nn\\
\leq &~ ({L(\eta+\alpha^t)(2\eta+\alpha^t+1)}/{\nu})  (f(\bbb{X}^t) - f(\bbb{X}^{t+1})) \nn\\
\leq &~ ({L(\eta+\alpha^t)(2\eta+\alpha^t+1)}/{\sigma \alpha^t \omega})  (f(\bbb{X}^t) - f(\bbb{X}^{t+1})),
\end{align}
\noi where the first step uses Cauchy-Schwarz inequality that $\forall \bbb{A},~\bbb{B},~\la \bbb{A},\bbb{B}\ra\leq \|\bbb{A}\|_{\text{F}}\|\bbb{B}\|_{\text{F}}$ and the triangle inequality that $\forall \bbb{A},~\bbb{B},~\|\bbb{A}+\bbb{B}\|_{\text{F}}\leq \|\bbb{A}\|_{\text{F}}+\|\bbb{B}\|_{\text{F}}$; the second step uses the gradient Lipschitz continouity of $f(\cdot)$ that $\|g(\bar{\bbb{X}}) - g(\bbb{X}^t) \|_{\text{F}}\leq L\|\bar{\bbb{X}} - \bbb{X}^t\|_{\text{F}}$; the third step uses Lemma \ref{eq:error:bound} that $\|\bbb{X}^*-\bbb{X}^t\|_{\text{F}}\leq \eta \|\bbb{D}^t\|_{\text{F}}$ and $\|\bbb{X}^{t+1}-\bbb{X}^*\|_{\text{F}} = \|\bbb{X}^{t}+\alpha^t\bbb{D}^t-\bbb{X}^*\|_{\text{F}} \leq \|\bbb{X}^{t}-\bbb{X}^*\|_{\text{F}} + \alpha^t\|\bbb{D}^t\|_{\text{F}} \leq (\eta+\alpha^t) \|\bbb{D}^t\|_{\text{F}}$; the fourth step uses the inequality that $\|\bbb{H}^t \circ \bbb{D}^t\|_{\text{F}} \leq L \|\bbb{D}^t\|_{\text{F}}$; the fifth step uses the triangle inequality that $\|\bar{\bbb{X}} - \bbb{X}^t\|_{\text{F}} = \|\bar{\bbb{X}} - \bbb{X}^* + \bbb{X}^* - \bbb{X}^t\|_{\text{F}}\leq \| \bar{\bbb{X}} - \bbb{X}^{*}\|_{\text{F}}+\| \bbb{X}^{*} - \bbb{X}^t\|_{\text{F}}$; the sixth step uses the fact that $\|\bar{\bbb{X}} - \bbb{X}^t\|_{\text{F}}  \leq \|\bbb{X}^{t+1}-\bbb{X}^* \|_{\text{F}}$ since $\bar{\bbb{X}}$ is a point lying on the segment joining $\bbb{X}^{t+1}$ and $\bbb{X}^{*}$; the ninth step uses (\ref{eq:conv}); the last step uses the definition of $\mu$.

We define $C^t\triangleq \tfrac{1-\alpha^t}{\alpha^t \omega} +\frac{L(\eta+\alpha^t)(2\eta+\alpha^t+1)}{\sigma \alpha^t \omega}$. Clearly, $C^t \leq  \tfrac{1}{\alpha^t \omega} +\frac{2L(\eta+1)^2/\sigma}{ \alpha^t \omega}  \leq \frac{1+2L(\eta+1)^2/\sigma}{ \min(\alpha^1,\alpha^2,...,\alpha^{\infty}) \omega} \triangleq C$. Combining (\ref{eq:linear:conv:0}), (\ref{eq:linear:conv:00}) and (\ref{eq:linear:conv}), we have the following results: $f(\bbb{X}^{t+1}) - f(\bbb{X}^*) \leq \textstyle C ( f(\bbb{X}^t) - f(\bbb{X}^{t+1}) )=  C( f(\bbb{X}^t)  - f(\bbb{X}^*)) - C ( f(\bbb{X}^{t+1}) - f(\bbb{X}^*))$. Finally, we obtain:
\beq
\textstyle \frac{f(\bbb{X}^{t+1}) - f(\bbb{X}^*) }{ f(\bbb{X}^t) -f(\bbb{X}^*)} \leq \frac{C}{C+1}.\nn
\eeq
Therefore, $f(\bbb{X}^t)$ converges to $f(\bbb{X}^*)$ at least Q-linearly. In addition, from (\ref{eq:conv}), we have: $\|\bbb{X}^{t+1} - \bbb{X}^t\|_{\text{F}}^2  \leq (f(\bbb{X}^t) - f(\bbb{X}^{t+1}))/\nu$. Since $f(\bbb{X}^{t+1}) - f(\bbb{X}^*)$ converges to 0 at least R-linearly, this implies that $\|\bbb{X}^{t+1} - \bbb{X}^t\|_{\text{F}}^2$ converges at least R-linearly as well. We thus complete the proof of this lemma.

\end{proof}
\end{theorem}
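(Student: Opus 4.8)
The plan is to reduce everything to a single ``one-step'' inequality on the function-value gaps $a_t \triangleq f(\bbb{X}^t)-f(\bbb{X}^*)$: if I can show $a_{t+1}\le C(a_t-a_{t+1})$ for a constant $C>0$ independent of $t$, then rearranging gives $a_{t+1}/a_t\le C/(C+1)<1$, i.e. $f(\bbb{X}^t)$ converges to $f(\bbb{X}^*)$ Q-linearly; R-linear convergence of $\{\bbb{X}^t\}$ then follows because $\|\bbb{X}^{t+1}-\bbb{X}^t\|_{\text{F}}^2=(\alpha^t)^2\|\bbb{D}^t\|_{\text{F}}^2\le (f(\bbb{X}^t)-f(\bbb{X}^{t+1}))/\nu$ (this is (\ref{eq:conv}) from Theorem \ref{theorem:conv}, with $\nu=\sigma\alpha^t\omega$), so $\|\bbb{X}^{t+1}-\bbb{X}^t\|_{\text{F}}$ decays geometrically and $\{\bbb{X}^t\}$ is Cauchy with geometric tail, hence converges R-linearly to the (unique, by strong convexity) optimal solution. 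To get the one-step estimate I would apply the Mean Value Theorem along the segment joining $\bbb{X}^*$ and $\bbb{X}^{t+1}$ -- both points are positive definite and lie in the affine set $\Omega$, where $p\equiv0$, so $F=f$ along the whole segment -- obtaining $f(\bbb{X}^{t+1})-f(\bbb{X}^*)=\la g(\bar{\bbb{X}}),\bbb{X}^{t+1}-\bbb{X}^*\ra$ for some $\bar{\bbb{X}}$ on it, and then add and subtract the model gradient $\bbb{G}^t+\bbb{H}^t\circ\bbb{D}^t$, splitting the gap into a ``model'' inner product and an ``error'' inner product.

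For the model term $\la\bbb{G}^t+\bbb{H}^t\circ\bbb{D}^t,\bbb{X}^{t+1}-\bbb{X}^*\ra$, I would use the first-order optimality condition of the Newton subproblem (\ref{eq:newton:direction}), $\bbb{0}\in\bbb{G}^t+\bbb{H}^t\circ\bbb{D}^t+\partial p(\bbb{X}^t+\bbb{D}^t)$, together with the subgradient inequality for the convex $p$ at $\bbb{X}^*$ versus $\bbb{X}^t+\bbb{D}^t$; since $p$ vanishes on $\Omega$ and both arguments are feasible, this yields $\la\bbb{X}^t+\bbb{D}^t-\bbb{X}^*,\bbb{G}^t+\bbb{H}^t\circ\bbb{D}^t\ra\le0$. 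Writing $\bbb{X}^{t+1}-\bbb{X}^*=(\alpha^t-1)\bbb{D}^t+(\bbb{X}^t+\bbb{D}^t-\bbb{X}^*)$, dropping the nonpositive term $(\alpha^t-1)\|\bbb{D}^t\|_{\bbb{H}^t}^2$, and using $\la\bbb{G}^t,\bbb{D}^t\ra\le0$ (a consequence of (\ref{eq:newton:suff:dec})), the model term is at most $(\alpha^t-1)\la\bbb{G}^t,\bbb{D}^t\ra=(1-\alpha^t)(-\la\bbb{G}^t,\bbb{D}^t\ra)$, which the Armijo sufficient-decrease inequality $-\alpha^t\omega\la\bbb{G}^t,\bbb{D}^t\ra\le f(\bbb{X}^t)-f(\bbb{X}^{t+1})$ bounds by $\tfrac{1-\alpha^t}{\alpha^t\omega}(f(\bbb{X}^t)-f(\bbb{X}^{t+1}))$.

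For the error term $\la g(\bar{\bbb{X}})-\bbb{G}^t-\bbb{H}^t\circ\bbb{D}^t,\bbb{X}^{t+1}-\bbb{X}^*\ra$, I would apply Cauchy--Schwarz and then: $L$-Lipschitzness of $g$ to replace $\|g(\bar{\bbb{X}})-\bbb{G}^t\|_{\text{F}}$ by $L\|\bar{\bbb{X}}-\bbb{X}^t\|_{\text{F}}$; the Kronecker bound $\|\bbb{H}^t\circ\bbb{D}^t\|_{\text{F}}\le L\|\bbb{D}^t\|_{\text{F}}$; the error bound of Lemma \ref{eq:error:bound}, $\|\bbb{X}^t-\bbb{X}^*\|_{\text{F}}\le\eta\|\bbb{D}^t\|_{\text{F}}$, which also gives $\|\bbb{X}^{t+1}-\bbb{X}^*\|_{\text{F}}\le(\eta+\alpha^t)\|\bbb{D}^t\|_{\text{F}}$; and the fact that $\bar{\bbb{X}}$ lies between $\bbb{X}^*$ and $\bbb{X}^{t+1}$, so that $\|\bar{\bbb{X}}-\bbb{X}^t\|_{\text{F}}\le\|\bbb{X}^*-\bbb{X}^t\|_{\text{F}}+\|\bbb{X}^{t+1}-\bbb{X}^*\|_{\text{F}}$. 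Chaining these collapses the error term to $L(\eta+\alpha^t)(2\eta+\alpha^t+1)\|\bbb{D}^t\|_{\text{F}}^2$, and (\ref{eq:conv}) once more turns $\|\bbb{D}^t\|_{\text{F}}^2$ into a multiple of the per-step decrease $f(\bbb{X}^t)-f(\bbb{X}^{t+1})$. Adding the two bounds gives $a_{t+1}\le C^t(a_t-a_{t+1})$ with $C^t=\tfrac{1-\alpha^t}{\alpha^t\omega}+\tfrac{L(\eta+\alpha^t)(2\eta+\alpha^t+1)}{\sigma\alpha^t\omega}$, and a uniform lower bound $\alpha^t\ge\underline{\alpha}>0$ replaces $C^t$ by a fixed $C$, completing the argument.

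The main obstacle is securing the two uniform-in-$t$ ingredients. The step-size lower bound $\alpha^t\ge\underline{\alpha}>0$ is exactly what Theorem \ref{theorem:conv}(i) delivers (from strong convexity and gradient Lipschitzness), so it is a matter of quoting that result. The genuinely delicate point is the error-bound constant $\eta$ of Lemma \ref{eq:error:bound}: one must justify that a single finite $\eta$ works at every iterate. The clean route is to observe that Assumption 1 confines all $\bbb{X}^t$ to the compact set $\{\bbb{X}:1/\sqrt{L}\,\bbb{I}\preceq\bbb{X}\preceq 1/\sqrt{\sigma}\,\bbb{I},~\bbb{X}_Z=\bbb{0}\}$, that $\bbb{X}\mapsto d(\bbb{X})$ is continuous with $d(\bbb{X})=\bbb{0}$ iff $\bbb{X}=\bbb{X}^*$, and that strong convexity of the quadratic model yields a linear lower bound $\|d(\bbb{X})\|_{\text{F}}\ge c\,\|\bbb{X}-\bbb{X}^*\|_{\text{F}}$ near $\bbb{X}^*$; together these make $\sup_{\bbb{X}\ne\bbb{X}^*}\|\bbb{X}-\bbb{X}^*\|_{\text{F}}/\|d(\bbb{X})\|_{\text{F}}$ finite over the set. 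Everything else -- the mean value step, the subgradient manipulations, and the triangle-inequality bookkeeping -- is routine once $\eta$ and $\underline{\alpha}$ are in hand.
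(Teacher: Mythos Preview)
Your proposal is correct and follows essentially the same route as the paper: the same mean-value decomposition of $f(\bbb{X}^{t+1})-f(\bbb{X}^*)$ into a ``model'' and an ``error'' term, the same optimality/subgradient argument for the former, the same Cauchy--Schwarz/Lipschitz/error-bound chain for the latter, and the same final constant $C^t=\tfrac{1-\alpha^t}{\alpha^t\omega}+\tfrac{L(\eta+\alpha^t)(2\eta+\alpha^t+1)}{\sigma\alpha^t\omega}$. Your closing paragraph on securing a uniform $\eta$ via compactness and continuity of $\bbb{X}\mapsto d(\bbb{X})$ is in fact more careful than the paper's own Lemma~\ref{eq:error:bound}, which asserts the existence of such an $\eta$ without explicitly arguing uniformity in $t$.
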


\textbf{Remarks:} Global linear convergence rate for convex composite optimization has been extensively studied in Ref. \cite{tseng2009coordinate}. This work extends their analysis to deal with our specific matrix optimization problem which involves an additional positive definite constraint $\bbb{X}\succ \bbb{0}$.


%
%
%
%

\begin{theorem}

(Local Quadratic Convergence Rate). A full Newton step size with $\alpha^t=1$ will be selected when $\bbb{X}^t$ is close enough to global optimal solution that $\|\bbb{D}^t\|_{\text{F}}\leq \min(1/\sqrt{L}-\epsilon,0.81/\sigma)$. Here $\epsilon$ denotes a sufficient small positive constant. In addition, when $\|\bbb{X}^t-\bbb{X}^*\|_{\text{F}}\leq 1/\sigma$, the sequence $\{\bbb{X}^t\}$ converges to the global optimal solution quadratically.

\begin{proof}
(i) First, we consider the positive definiteness condition (refer to step \ref{alg:step:linesearch:end} in Algorithm \ref{algo:newton}). We derive the following results:
\beq
\bbb{0}& \prec& (\tfrac{1}{\sqrt{L}} -  (\tfrac{1}{\sqrt{L}}-\epsilon))\bbb{I}\nn\\
&\preceq& \bbb{X}^t - \|\bbb{D}^t\|_{\text{F}}\cdot\bbb{I} \preceq \bbb{X}^t - \lambda_n(\bbb{D}^t)\bbb{I} \preceq \bbb{X}^t + \bbb{D}^t,\nn
\eeq
\noi where the first step uses $\tfrac{1}{\sqrt{L}} -  (\tfrac{1}{\sqrt{L}}-\epsilon)>0$; the second step uses $\bbb{X}^t \succeq \frac{1}{\sqrt{L}}\bbb{I}$ and $\|\bbb{D}^t\|_{\text{F}}\leq 1/\sqrt{L}-\epsilon$; the third step uses the inequality $\bbb{\lambda}_n(\bbb{D}^t)\leq\|\bbb{D}^t\|_{\text{F}}$; the last step uses $-\lambda_n(\bbb{D}^t)\bbb{I} \preceq \bbb{D}^t$ and $\alpha^t=1$. Therefore, the positive definite condition is satisfied for $\alpha^t=1$ when
\beq \label{eq:newton:step:1}
\|\bbb{D}^t\|_{\text{F}}\leq 1/\sqrt{L}-\epsilon.
\eeq
Second, we consider the sufficient decrease condition (refer to step \ref{alg:step:linesearch:end2} in Algorithm \ref{algo:newton}). Since $f(\bbb{X})$ is a standard self-concordant function, it holds that (refer to Theorem 4.1.8 in \cite{Nesterov03}):
\beq 
\label{eq:con:pro2} f(\bbb{Y}) - f(\bbb{X}) - \la g(\bbb{X}) ,\bbb{Y}-\bbb{X} \ra \leq \varphi(\|\bbb{Y}-\bbb{X}\|_{h(\bbb{X})})\nn
\eeq
\noi with $\varphi(t) \triangleq -t-\ln(1-t)$. Applying this inequality with $\bbb{Y}=\bbb{X}^{t+1},~\bbb{X}=\bbb{X}^t$ and using the update rule that $\bbb{X}^{t+1}=\bbb{X}^t+\alpha^t \bbb{D}^t$, we have the following inequalities:
\begin{align*}
&~f(\bbb{X}^{t+1}) \nn\\
 \leq & ~f(\bbb{X}^t)  + \alpha^t \la {\bbb{G}}^t , \bbb{D}^t \ra + \varphi( \alpha^t \|\bbb{D}^t\|_{{H}^t})\nn\\
 \leq & ~f(\bbb{X}^t)  + \alpha^t \la {\bbb{G}}^t , \bbb{D}^t \ra +  \tfrac{(\alpha^t)^2}{2}  \|\bbb{D}^t\|_{{\bbb{H}}^t}^2 + (\alpha^t)^3 \|\bbb{D}^t\|_{{\bbb{H}}^t}^3 \nn \\
\leq&~f(\bbb{X}^t) + \la \bbb{G}^t , \bbb{D}^t \ra + \tfrac{1}{2}\|\bbb{D}^t\|_{\bbb{H}^t}^2+ \|\bbb{D}^t\|_{\bbb{H}^t}^3  \nn\\
\leq & ~f (\bbb{X}^t) + \la \bbb{G}^t , \bbb{D}^t \ra -  \tfrac{1}{2} \la \bbb{G}^t , \bbb{D}^t \ra +  (-\la \bbb{G}^t , \bbb{D}^t \ra) ^{3/2}  \nn\\
=& ~f (\bbb{X}^t)  + \la \bbb{G}^t , \bbb{D}^t \ra  ( \tfrac{1}{2} - \sqrt{-\la  \bbb{G}^t , \bbb{D}^t \ra} )  \nn\\
\leq &~ f (\bbb{X}^t) +  \omega \la \bbb{G}^t , \bbb{D}^t \ra \tfrac{1}{\omega} ,
\end{align*}
\noi where the second step uses the fact that $-z-\ln(1-z)\leq \frac{1}{2}z^2+z^3$ for $0\leq z\leq 0.81$ (see Section 9.6 in \cite{Boyd04}) and the inequality that $z\triangleq \alpha^t \|\bbb{D}^t\|_{{\bbb{H}}^t} \leq 0.81$, where the latter is true since
\beq \label{eq:newton:step:2}
\| \bbb{D}^t \|_{\text{F}} \leq {0.81}/{\sigma} \Rightarrow \|\bbb{D}^t\|_{{\bbb{H}}^t} \leq 0.81
\eeq
\noi and $\alpha^t\leq 1$;  the third step uses the choice $\alpha^t=1$; the fourth step uses (\ref{eq:newton:suff:dec}); the last step uses the inequality that $\tfrac{1}{2} - \sqrt{-\la  \bbb{G}^t , ~\bbb{D}^t \ra} \leq  \tfrac{1}{\omega}$, which is clearly holds since $\omega<1/2$.

Combining (\ref{eq:newton:step:1}) and (\ref{eq:newton:step:2}), we conclude that the full Newton step size will be achieved when $\|\bbb{D}^t\|_{\text{F}}\leq \min(1/\sqrt{L}-\epsilon,0.81/\sigma)$.

(ii) We now prove the second part of this theorem. Recall that when $f(\cdot)$ is a standard self-concordant function, it holds that (refer to Lemma 1 in \cite{Nesterov12}):
\beq
\label{eq:con:pro1} \|g(\bbb{Y})-g(\bbb{X})-h(\bbb{X})(\bbb{Y}-\bbb{X})\|_{h(\bbb{X})} \leq \tfrac{r^2}{1- r}
\eeq

\noi for all $r  \triangleq \|\bbb{X}-\bbb{Y}\|_{h(\bbb{X})} < 1$. We define the generalized proximal operator as:
\beq
\prox_p^{\bbb{N}}(\bbb{X}) \triangleq  \arg \min_{\bbb{Y}}~\tfrac{1}{2}\|\bbb{Y}-\bbb{X}\|_{\bbb{N}}^2 + p(\bbb{Y}). \nn
\eeq
\noi Thus, $\bbb{D}^t$ can be represented as:
\beq
\bbb{D}^t &=& \arg \min_{\bbb{\Delta}}~ \la \bbb{\Delta}, \bbb{G}^t \ra + \tfrac{1}{2}\|\bbb{\Delta}\|_{\bbb{H}^t}^2 + p(\bbb{X}^t +\bbb{\Delta} ) \nn\\
&=&\arg \min_{\bbb{\Delta}}~ \tfrac{1}{2}\|\bbb{\Delta} + (\bbb{H}^t)^{-1}\circ\bbb{G}^t \|_{\bbb{H}^t}^2 + p(\bbb{X}^t +\bbb{\Delta}) \nn\\
& = & \prox_p^{\bbb{H}^t}(\bbb{X}^t - (\bbb{H}^t)^{-1} \circ \bbb{G}^t ) - \bbb{X}^t.\nn
\eeq
\noi We derive the following inequalities:
\begin{align} \label{eq:updaterulaX}
&~\|\bbb{X}^{t+1}-\bbb{X}^*\|_{{\bbb{H}}^t} \nn\\
=&~ \|\bbb{X}^t + \bbb{D}^t - \bbb{X}^*\|_{{\bbb{H}}^t}\nn\\
=&~ \|\prox_p^{\bbb{H}^t}(\bbb{X}^t-(\bbb{H}^t)^{-1}\circ \bbb{G^t}) - \bbb{X}^*\|_{{\bbb{H}}^t}\nn\\
=&~ \| \prox_p^{{\bbb{H}}^t}(\bbb{X}^t- (\bbb{H}^t)^{-1} \circ \bbb{G}^t) - \nn\\
&~~~\prox_p^{{\bbb{H}}^t}(\bbb{X}^*- (\bbb{H}^t)^{-1} \circ g(\bbb{X}^*))\|_{{\bbb{H}}^t} \nn \\
\leq&~ \|\bbb{X}^t - \bbb{X}^* + ({\bbb{H}}^t)^{-1} \circ (\bbb{G}^*-\bbb{G}^t) \|_{{\bbb{H}}^t}\nn\\
=&~ \| ({\bbb{H}}^t)^{-1} \circ ({\bbb{H}}^t \circ \left( \bbb{X}^t - \bbb{X}^* +  ({\bbb{H}}^t)^{-1} (\bbb{G}^*-\bbb{G}^t) \right)) \|_{{\bbb{H}}^t}\nn\\
=&~ \| ({\bbb{H}}^t)^{-1} \circ ( {\bbb{H}}^t \circ \left( \bbb{X}^t - \bbb{X}^*\right) +  (\bbb{G}^*-\bbb{G}^t)) \|_{{\bbb{H}}^t}\nn\\
\leq&~ \| ({\bbb{H}}^t)^{-1} \circ \bbb{I} \|_{\bbb{H}^t} \cdot \|( {\bbb{H}}^t \circ \left( \bbb{X}^t - \bbb{X}^*\right) +  (\bbb{G}^*-\bbb{G}^t)) \|_{{\bbb{H}}^t}\nn\\
\leq&~ \tfrac{1}{\sigma} \cdot \| {\bbb{H}}^t  \left( \bbb{X}^t - \bbb{X}^* + ({\bbb{H}}^t)^{-1} (\bbb{G}^*-\bbb{G}^t) \right) \|_{{\bbb{H}}^t}\nn\\
=&~  \tfrac{1}{\sigma} \| {\bbb{H}}^t (\bbb{X}^t - \bbb{X}^*) -\bbb{G}^t + \bbb{G}^* \|_{{\bbb{H}}^t}\nn\\
\leq&~   \frac{ \tfrac{1}{\sigma} \|\bbb{X}^t - \bbb{X}^*\|_{{\bbb{H}}^t}^2 }{     1 - \|\bbb{X}^t - \bbb{X}^*\|_{{\bbb{H}}^t}  },\nn
\end{align}
\noi where the third step uses the fact that $\prox_p^{{\bbb{H}}^t}(\bbb{X}^*- (\bbb{H}^t)^{-1} \circ g(\bbb{X}^*))\|_{{\bbb{H}}^t}=\bbb{X}^*$; the fourth step uses the fact that the generalized proximal mappings are firmly non-expansive in the generalized vector norm; the seventh step uses the Cauchy-Schwarz inequality; the eighth step uses the fact that $\|({\bbb{H}}^t)^{-1} \circ \bbb{I} \|_{{\bbb{H}}^t} \leq \frac{1}{\sigma}$ with $\bbb{I}$ being an identity matrix of dimension $n$; the last step uses (\ref{eq:con:pro1}).

\noi In particular, when $\|\bbb{X}-\bbb{Y}\|_{h(\bbb{X})} < 1$~($\Leftrightarrow \|\bbb{X}-\bbb{Y}\|_{\text{F}} < 1/\sigma$), we have:
\beq
&&\|\bbb{X}^{t+1} - \bbb{X}^*\|_{{\bbb{H}}^t} \leq \tfrac{1}{\sigma}\|\bbb{X}^t - \bbb{X}^*\|_{{\bbb{H}}^t}^2\nn \\
\Rightarrow &&\|\bbb{X}^{t+1} - \bbb{X}^*\|_{\text{F}} \leq \tfrac{L}{\sigma^2}\|\bbb{X}^t - \bbb{X}^*\|_{\text{F}}^2.\nn
\eeq

\noi In other words, NLOA converges to the global optimal solution $\bbb{X}^*$ with asymptotic quadratic convergence rate.
\end{proof}
\end{theorem}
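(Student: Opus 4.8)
The plan is to establish two things in turn: (i) that the unit step $\alpha^t=1$ passes both acceptance tests of Algorithm~\ref{algo:newton} once $\bbb{D}^t$ is small in Frobenius norm, and (ii) that, once full steps are taken, the iterates contract quadratically. Everything rests on the fact that $f(\bbb{X})=\la\bbb{\Sigma},\bbb{X}\ra-\log\det(\bbb{X})$ is a standard self-concordant function (the $-\log\det$ term is self-concordant and adding a linear term does not change this), together with the uniform spectral bounds $(1/\sqrt{L})\bbb{I}\preceq\bbb{X}^t\preceq(1/\sqrt{\sigma})\bbb{I}$ and $\sigma\bbb{I}\preceq\bbb{H}^t\preceq L\bbb{I}$ implied by Assumption~1, and on Lemma~\ref{lemma:optimal:d}.

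For part (i), the positive-definiteness test is the easy half: since $\bbb{X}^t\succeq(1/\sqrt{L})\bbb{I}$ and $-\bbb{D}^t\preceq\bbb{\lambda}_n(\bbb{D}^t)\bbb{I}\preceq\|\bbb{D}^t\|_{\text{F}}\bbb{I}$, we get $\bbb{X}^t+\bbb{D}^t\succeq(1/\sqrt{L}-\|\bbb{D}^t\|_{\text{F}})\bbb{I}\succ\bbb{0}$ whenever $\|\bbb{D}^t\|_{\text{F}}\le 1/\sqrt{L}-\epsilon$. For the sufficient-decrease test I would apply Nesterov's self-concordant bound (Theorem~4.1.8 in \cite{Nesterov03}), $f(\bbb{Y})-f(\bbb{X})-\la g(\bbb{X}),\bbb{Y}-\bbb{X}\ra\le\varphi(\|\bbb{Y}-\bbb{X}\|_{h(\bbb{X})})$ with $\varphi(z)=-z-\ln(1-z)$, at $\bbb{X}=\bbb{X}^t$ and $\bbb{Y}=\bbb{X}^t+\bbb{D}^t$, then replace $\varphi(z)$ by its majorant $\tfrac12 z^2+z^3$, valid for $z\le 0.81$ (Section~9.6 of \cite{Boyd04}); smallness of $\|\bbb{D}^t\|_{\text{F}}$ guarantees $z=\|\bbb{D}^t\|_{\bbb{H}^t}\le 0.81$ through the upper eigenvalue bound on $\bbb{H}^t$. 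Plugging in $\|\bbb{D}^t\|_{\bbb{H}^t}^2\le-\la\bbb{G}^t,\bbb{D}^t\ra$ from Lemma~\ref{lemma:optimal:d} collapses the right-hand side to $f(\bbb{X}^t)+\la\bbb{G}^t,\bbb{D}^t\ra\bigl(\tfrac12-\sqrt{-\la\bbb{G}^t,\bbb{D}^t\ra}\bigr)$, and since $-\la\bbb{G}^t,\bbb{D}^t\ra$ is also small, that factor exceeds $\omega$, which is exactly the Armijo condition at $\alpha^t=1$.

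For part (ii), the key device is to read the Newton direction as a generalized proximal step. Completing the square in (\ref{eq:newton:direction}) gives $\bbb{D}^t=\prox_p^{\bbb{H}^t}\bigl(\bbb{X}^t-(\bbb{H}^t)^{-1}\circ\bbb{G}^t\bigr)-\bbb{X}^t$, while the optimality of $\bbb{X}^*$ for (\ref{eq:general:op}) yields the fixed-point identity $\bbb{X}^*=\prox_p^{\bbb{H}^t}\bigl(\bbb{X}^*-(\bbb{H}^t)^{-1}\circ g(\bbb{X}^*)\bigr)$. Subtracting and using that the generalized proximal map is firmly non-expansive in the $\bbb{H}^t$-weighted norm, I would bound $\|\bbb{X}^{t+1}-\bbb{X}^*\|_{\bbb{H}^t}\le\bigl\|\bbb{X}^t-\bbb{X}^*+(\bbb{H}^t)^{-1}\circ(g(\bbb{X}^*)-\bbb{G}^t)\bigr\|_{\bbb{H}^t}$; pulling out $(\bbb{H}^t)^{-1}$, using Cauchy--Schwarz in this norm together with $\|(\bbb{H}^t)^{-1}\circ\bbb{I}\|_{\bbb{H}^t}\le 1/\sigma$, and finally invoking the self-concordant residual estimate $\|g(\bbb{X}^*)-\bbb{G}^t-\bbb{H}^t\circ(\bbb{X}^*-\bbb{X}^t)\|_{\bbb{H}^t}\le r^2/(1-r)$ with $r=\|\bbb{X}^t-\bbb{X}^*\|_{\bbb{H}^t}$ (Lemma~1 in \cite{Nesterov12}), one obtains $\|\bbb{X}^{t+1}-\bbb{X}^*\|_{\bbb{H}^t}\le\tfrac{1}{\sigma}\|\bbb{X}^t-\bbb{X}^*\|_{\bbb{H}^t}^2/(1-\|\bbb{X}^t-\bbb{X}^*\|_{\bbb{H}^t})$. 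Translating back to the Frobenius norm through $\sigma\bbb{I}\preceq\bbb{H}^t\preceq L\bbb{I}$, and using that $\|\bbb{X}^t-\bbb{X}^*\|_{\text{F}}\le 1/\sigma$ forces $r<1$, gives $\|\bbb{X}^{t+1}-\bbb{X}^*\|_{\text{F}}\le(L/\sigma^2)\|\bbb{X}^t-\bbb{X}^*\|_{\text{F}}^2$, i.e.\ asymptotic quadratic convergence.

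The part I expect to be delicate is (ii): the whole contraction argument lives inside the \emph{iteration-dependent}, non-Euclidean norm $\|\cdot\|_{\bbb{H}^t}$, so the generalized-prox reformulation, the firm non-expansiveness in that norm, and Nesterov's residual bound all have to be combined in a way that is consistent with the changing weight, and the back-and-forth conversions with $\|\cdot\|_{\text{F}}$ must be carried out without destroying the quadratic order (this is where $\sigma\bbb{I}\preceq\bbb{H}^t\preceq L\bbb{I}$ and $\|(\bbb{H}^t)^{-1}\circ\bbb{I}\|_{\bbb{H}^t}\le 1/\sigma$ are essential). Part (i) is comparatively routine once self-concordance is invoked, the only care being to keep $\|\bbb{D}^t\|_{\bbb{H}^t}$ within the range where the quadratic-plus-cubic majorant of $\varphi$ and the estimate on $-\la\bbb{G}^t,\bbb{D}^t\ra$ are both valid.
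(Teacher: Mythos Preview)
Your proposal is correct and follows essentially the same route as the paper's proof: part~(i) uses the eigenvalue lower bound on $\bbb{X}^t$ for positive definiteness and Nesterov's self-concordant upper bound together with the cubic majorant of $\varphi$ and Lemma~\ref{lemma:optimal:d} for sufficient decrease, while part~(ii) recasts $\bbb{D}^t$ as a generalized proximal step, uses the fixed-point identity for $\bbb{X}^*$, firm non-expansiveness in the $\bbb{H}^t$-norm, the bound $\|(\bbb{H}^t)^{-1}\circ\bbb{I}\|_{\bbb{H}^t}\le 1/\sigma$, and the self-concordant residual estimate $r^2/(1-r)$, exactly as the paper does. Your observation that one needs the factor $\tfrac{1}{2}-\sqrt{-\la\bbb{G}^t,\bbb{D}^t\ra}\ge\omega$ (rather than the paper's stated $\le 1/\omega$) is in fact the correct direction, and it follows since for this linear-constraint subproblem one actually has the equality $-\la\bbb{G}^t,\bbb{D}^t\ra=\|\bbb{D}^t\|_{\bbb{H}^t}^2$, which is small when $\|\bbb{D}^t\|_{\text{F}}$ is.
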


\textbf{Remarks:} We are also aware of the work of \cite{HsiehSDR14}, which shows the quadratic convergence rate for their Newton-like method. However, their work focus on a different convex $\ell_1$ norm regularized sparse inverse covariance selection problem and their results are not applicable to our problem. In addition, their work is based on the assumption that the objective is Hessian Lipschitz continuous, while ours is based on the self-concordant analysis \cite{Nesterov03,Boyd04} of the objective function.

\begin{figure*} [!t] 
\captionsetup{singlelinecheck = on, format= hang, justification=justified, font=footnotesize, labelsep=space}
\centering
      \begin{subfigure}{0.24\textwidth}\includegraphics[width=1\textwidth,height=\hone]{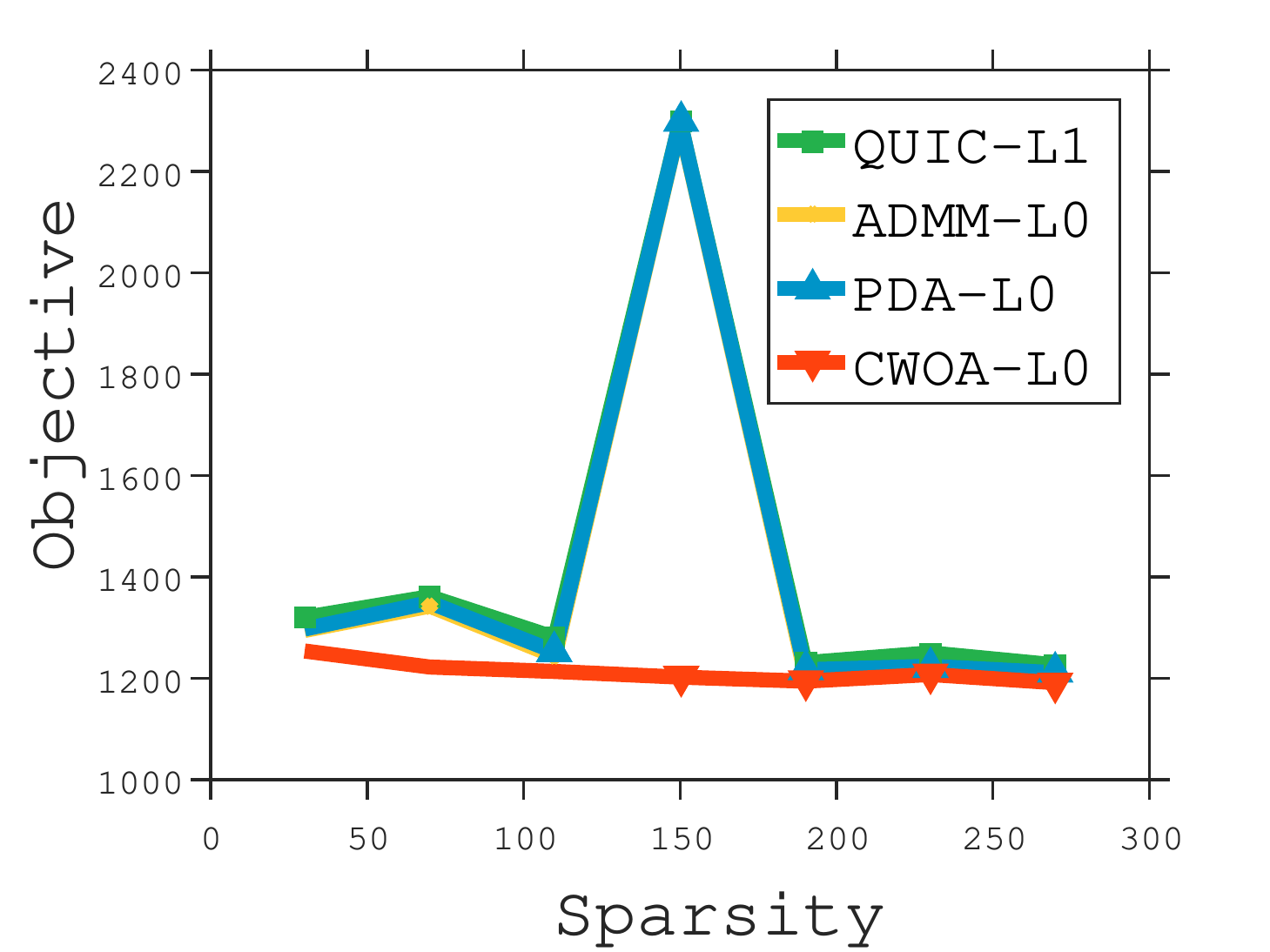}\vspace{-6pt}\caption{\scriptsize Gaussian-Random-500}
      \end{subfigure}
      \begin{subfigure}{0.24\textwidth}\includegraphics[width=1\textwidth,height=\hone]{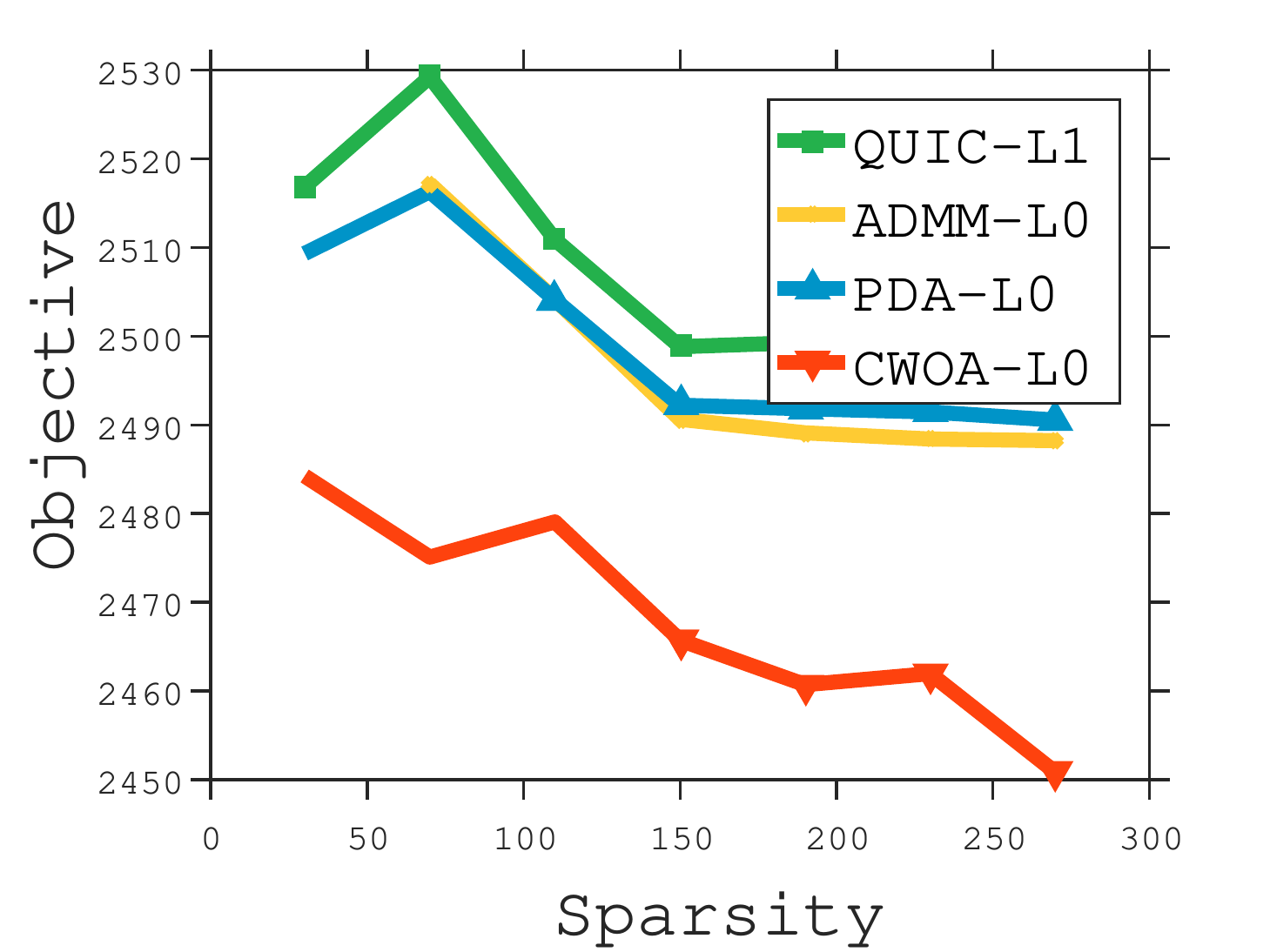}\vspace{-6pt}\caption{\scriptsize Gaussian-Random-1000}
      \end{subfigure}
      \begin{subfigure}{0.24\textwidth}\includegraphics[width=1\textwidth,height=\hone]{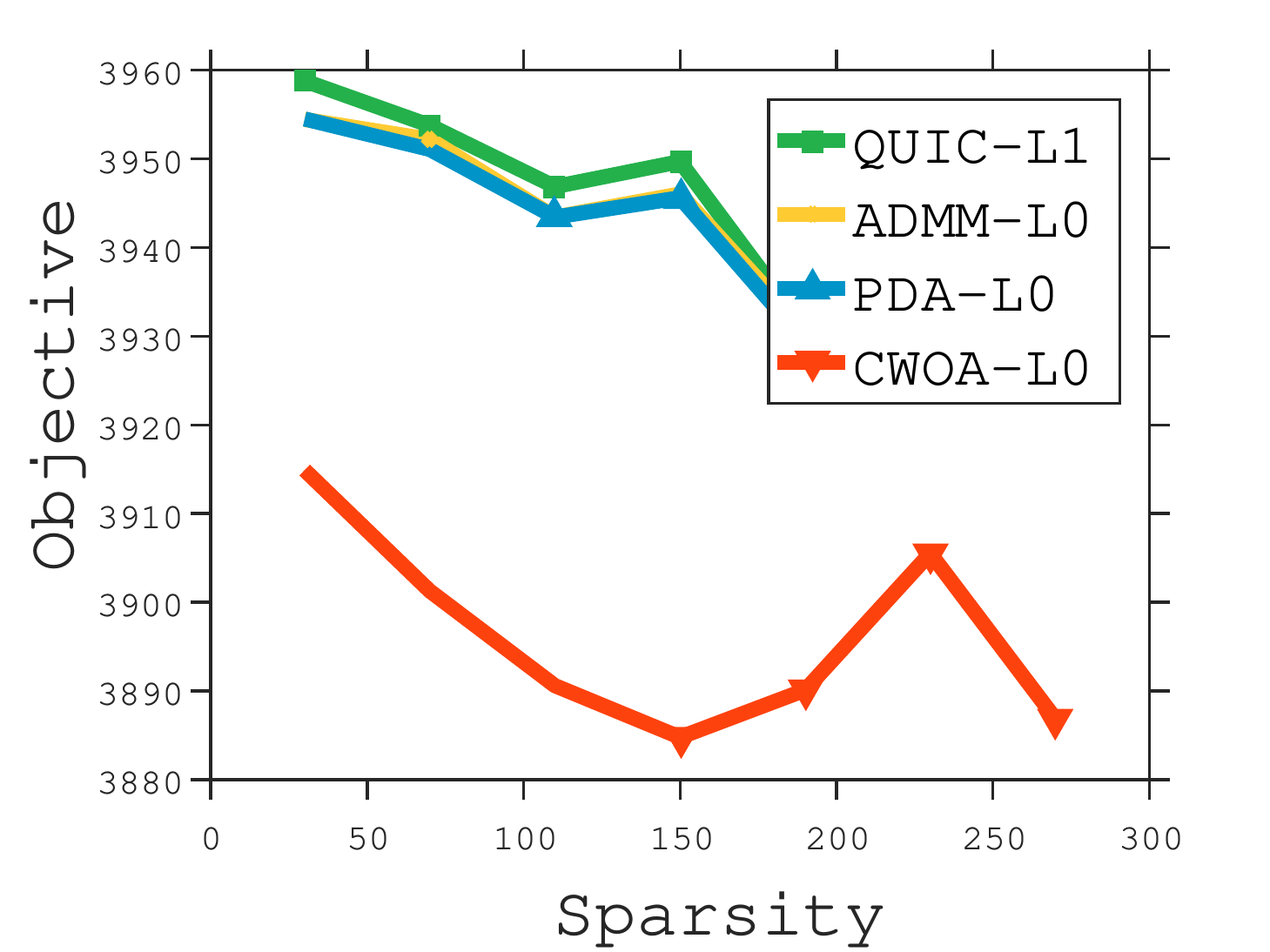}\vspace{-6pt}\caption{\scriptsize Gaussian-Random-1500}
      \end{subfigure}
      \begin{subfigure}{0.24\textwidth}\includegraphics[width=1\textwidth,height=\hone]{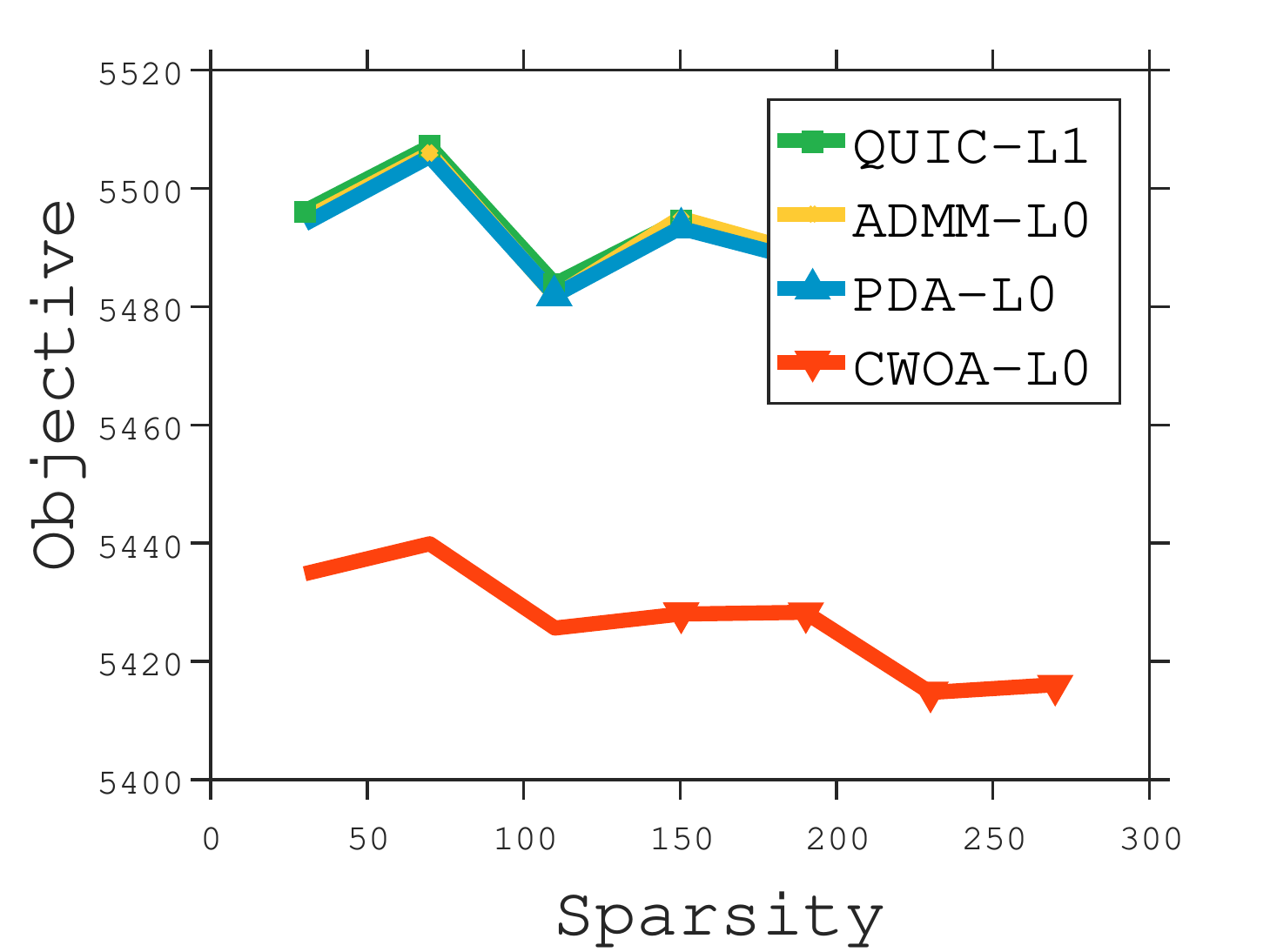}\vspace{-6pt}\caption{\scriptsize Gaussian-Random-2000}
      \end{subfigure}


      \begin{subfigure}{0.24\textwidth}\includegraphics[width=1\textwidth,height=\hone]{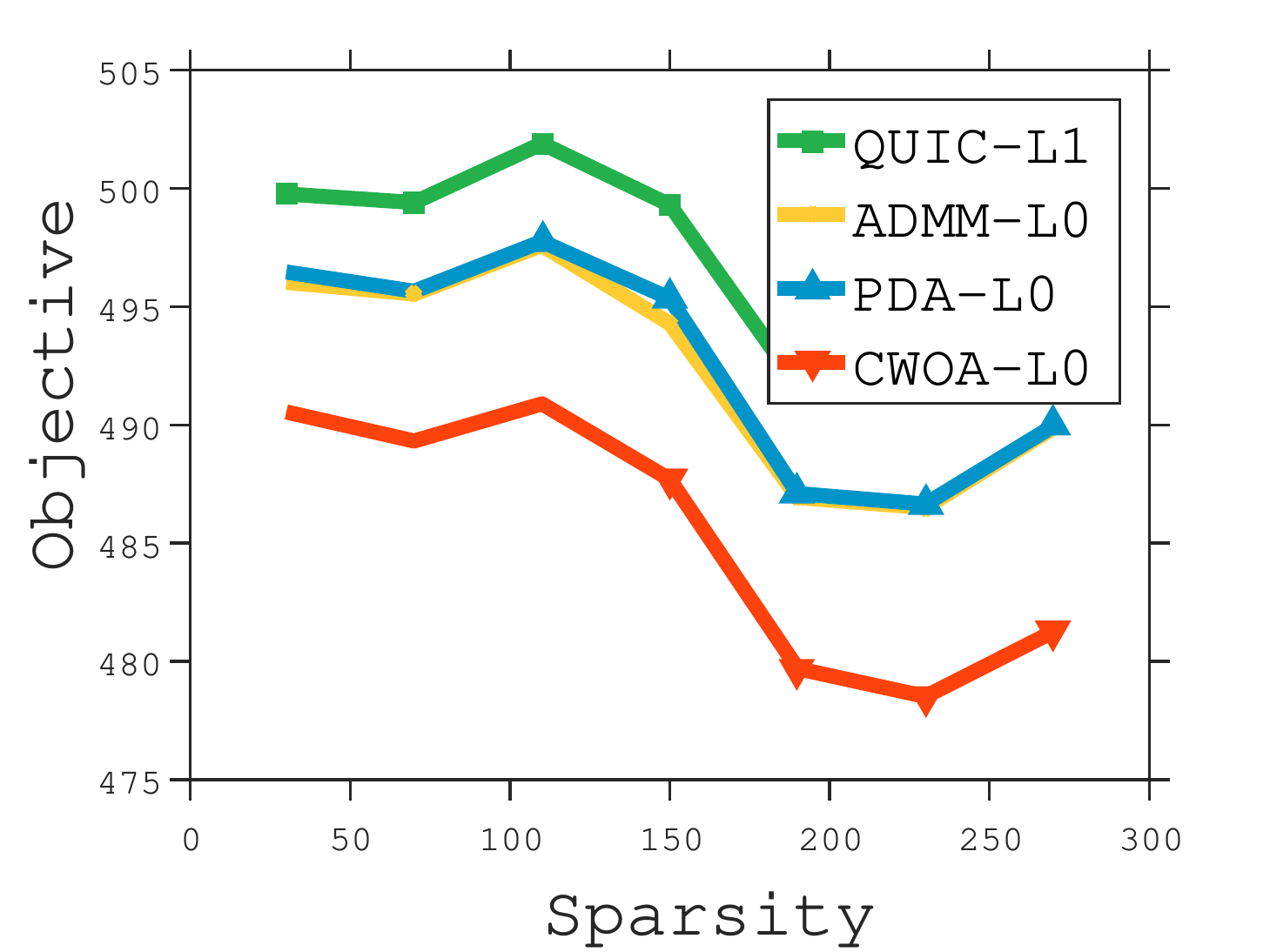}\vspace{-6pt}\caption{\scriptsize Sparse-Structured-500}
      \end{subfigure}
      \begin{subfigure}{0.24\textwidth}\includegraphics[width=1\textwidth,height=\hone]{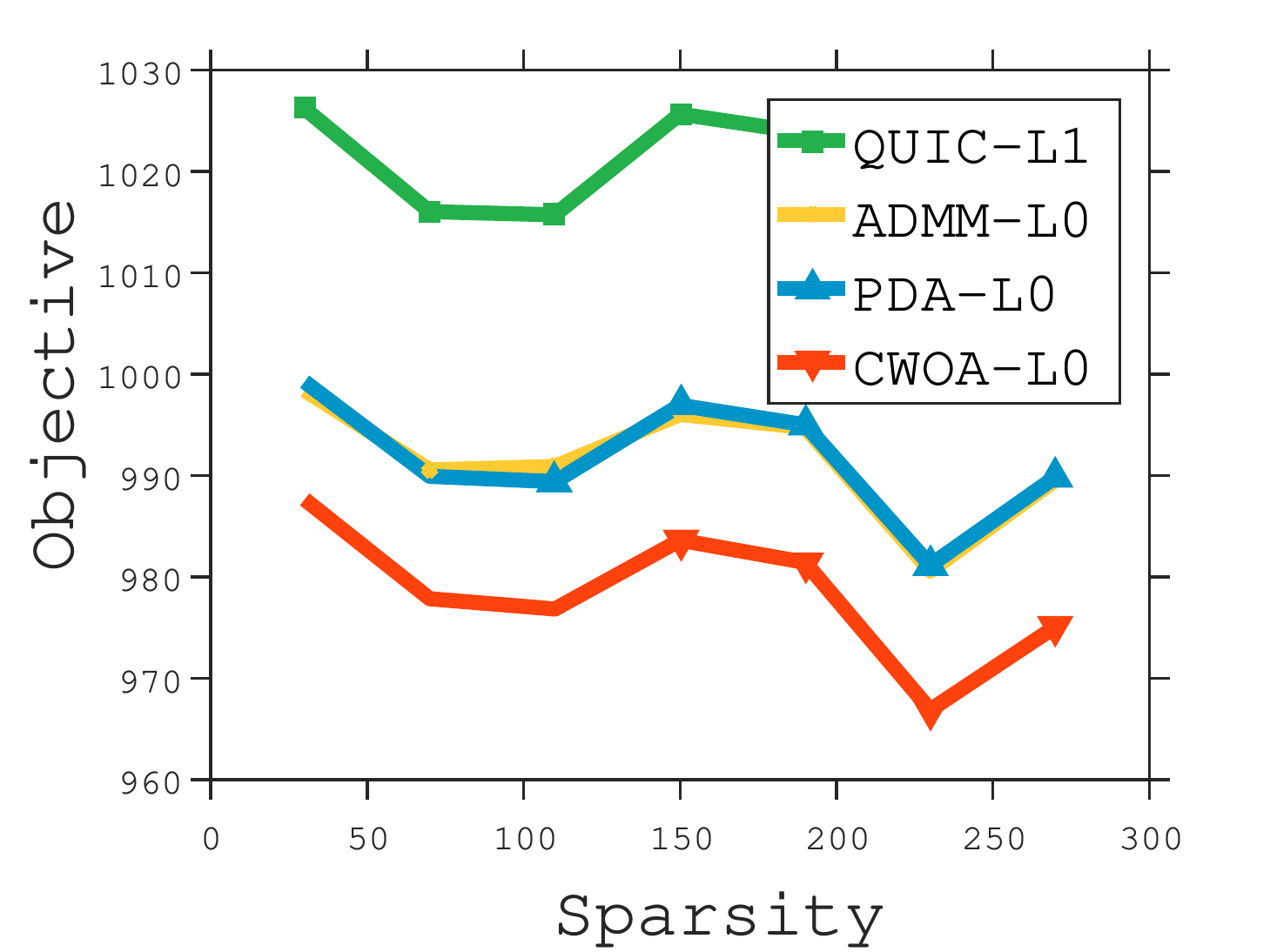}\vspace{-6pt}\caption{\scriptsize Sparse-Structured-1000}
      \end{subfigure}
      \begin{subfigure}{0.24\textwidth}\includegraphics[width=1\textwidth,height=\hone]{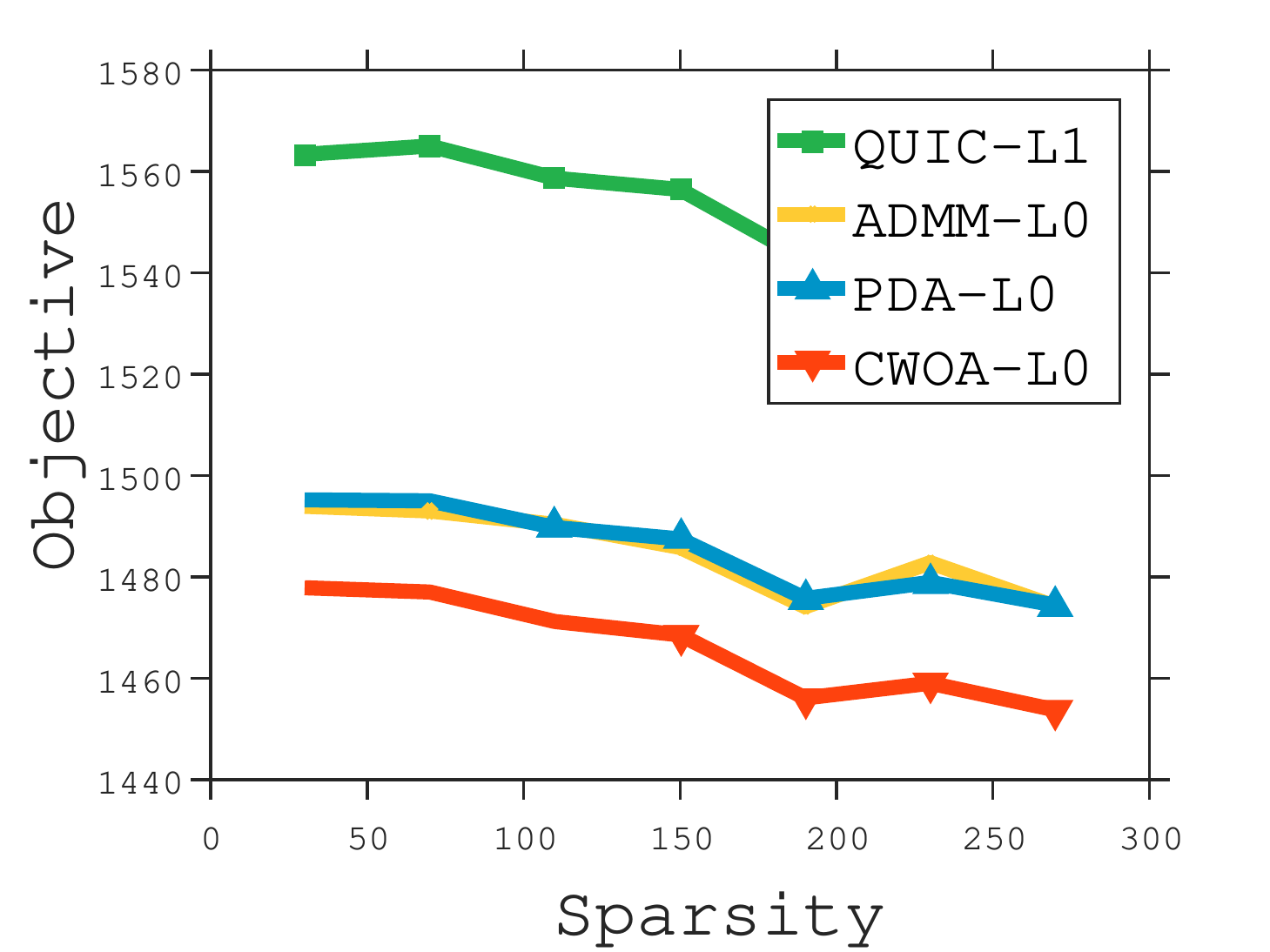}\vspace{-6pt}\caption{\scriptsize Sparse-Structured-1500}
      \end{subfigure}
      \begin{subfigure}{0.24\textwidth}\includegraphics[width=1\textwidth,height=\hone]{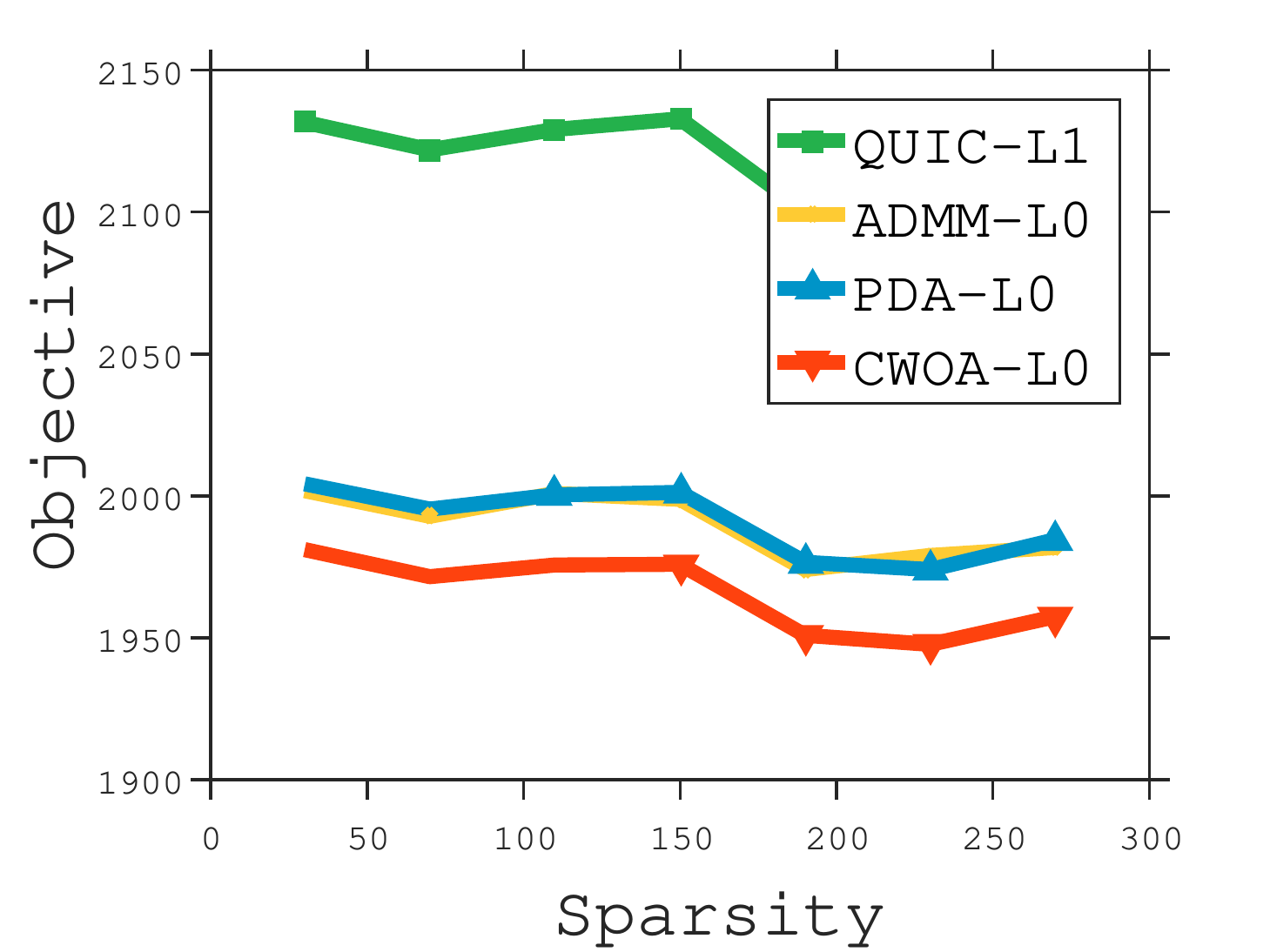}\vspace{-6pt}\caption{\scriptsize Sparse-Structured-2000}
      \end{subfigure}


      \begin{subfigure}{0.24\textwidth}\includegraphics[width=1\textwidth,height=\hone]{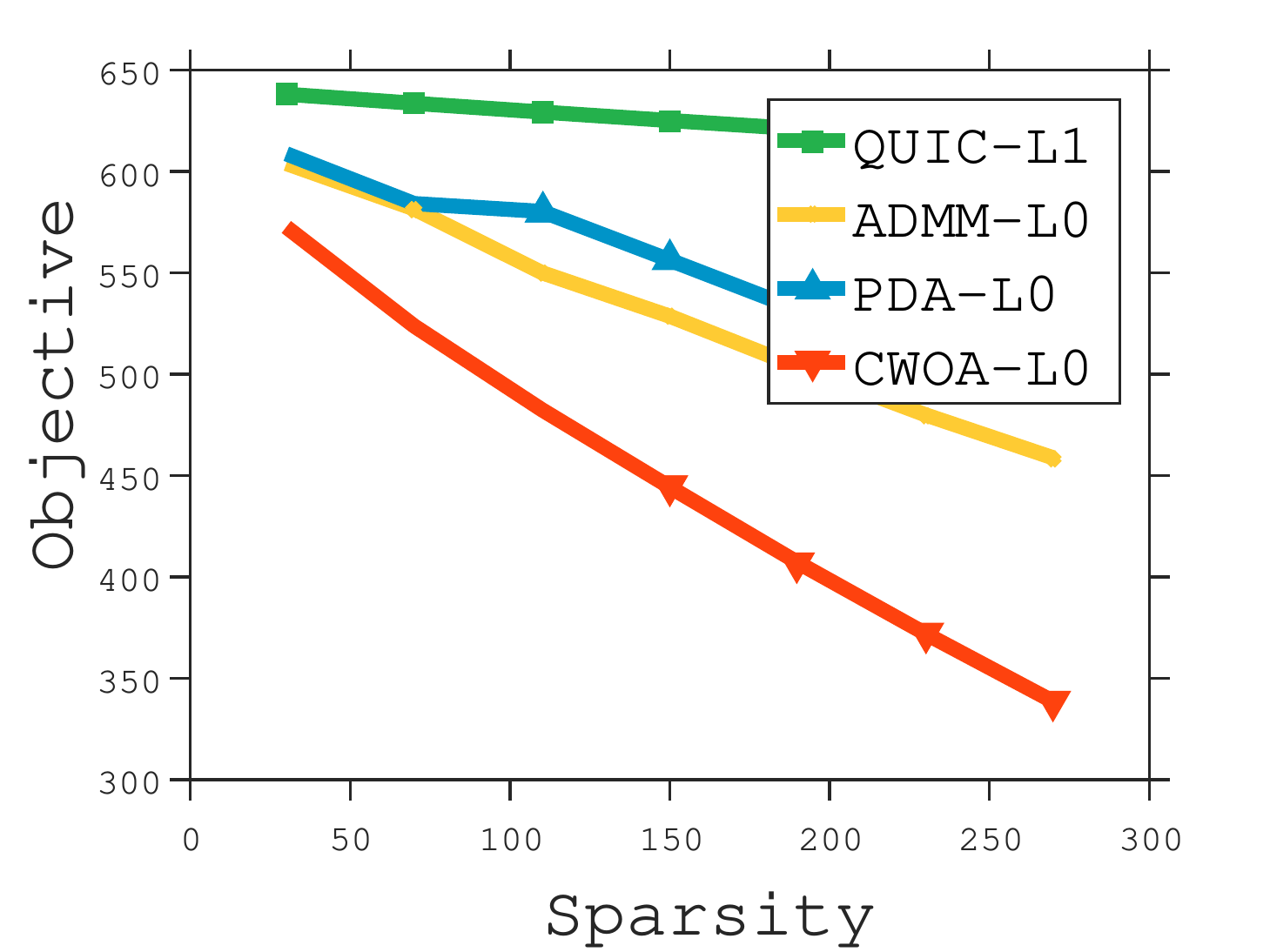}\vspace{-6pt}\caption{\scriptsize Real-World-isolet}
      \end{subfigure}
      \begin{subfigure}{0.24\textwidth}\includegraphics[width=1\textwidth,height=\hone]{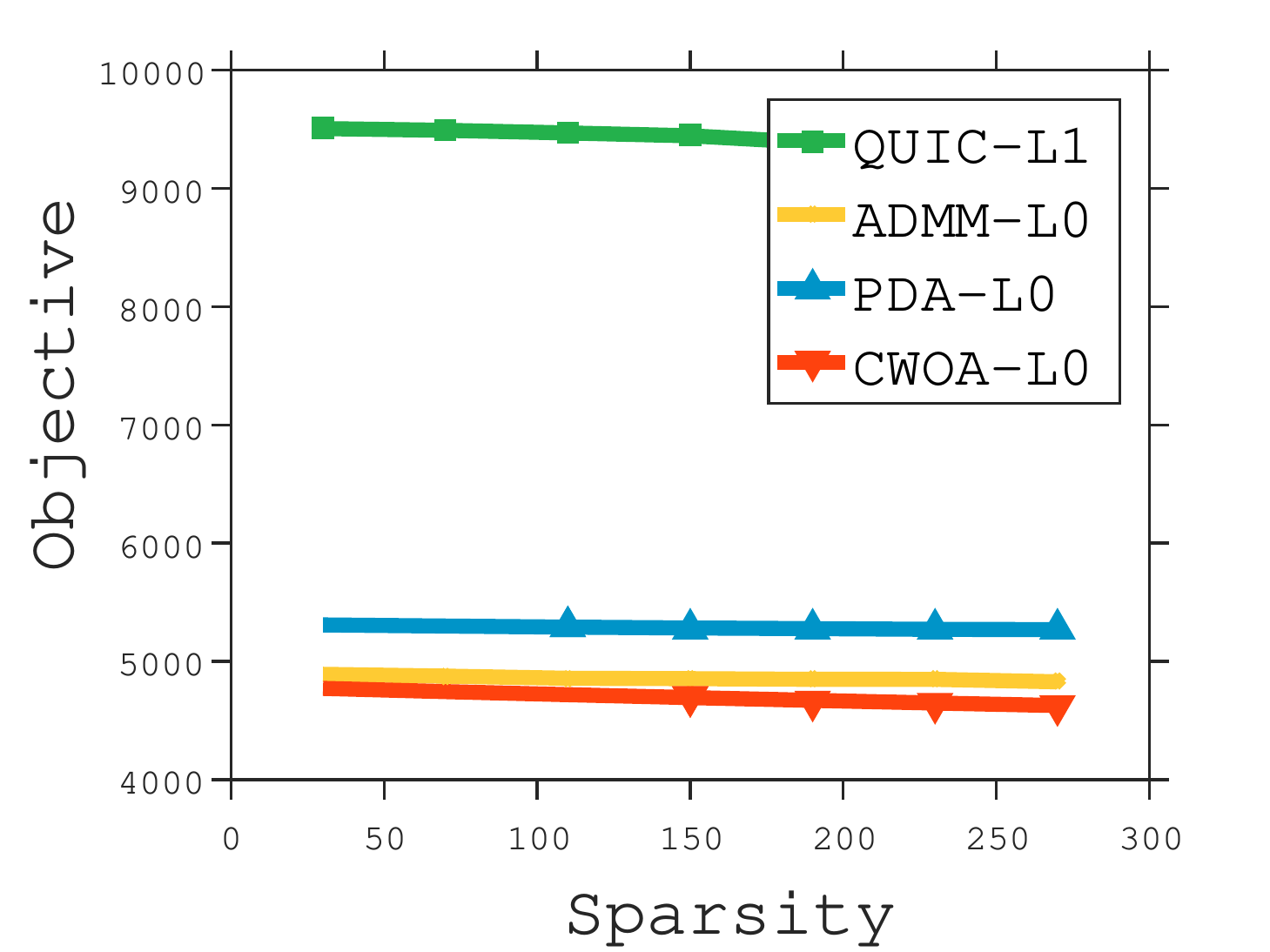}\vspace{-6pt}\caption{\scriptsize Real-World-mnist}
      \end{subfigure}
      \begin{subfigure}{0.24\textwidth}\includegraphics[width=1\textwidth,height=\hone]{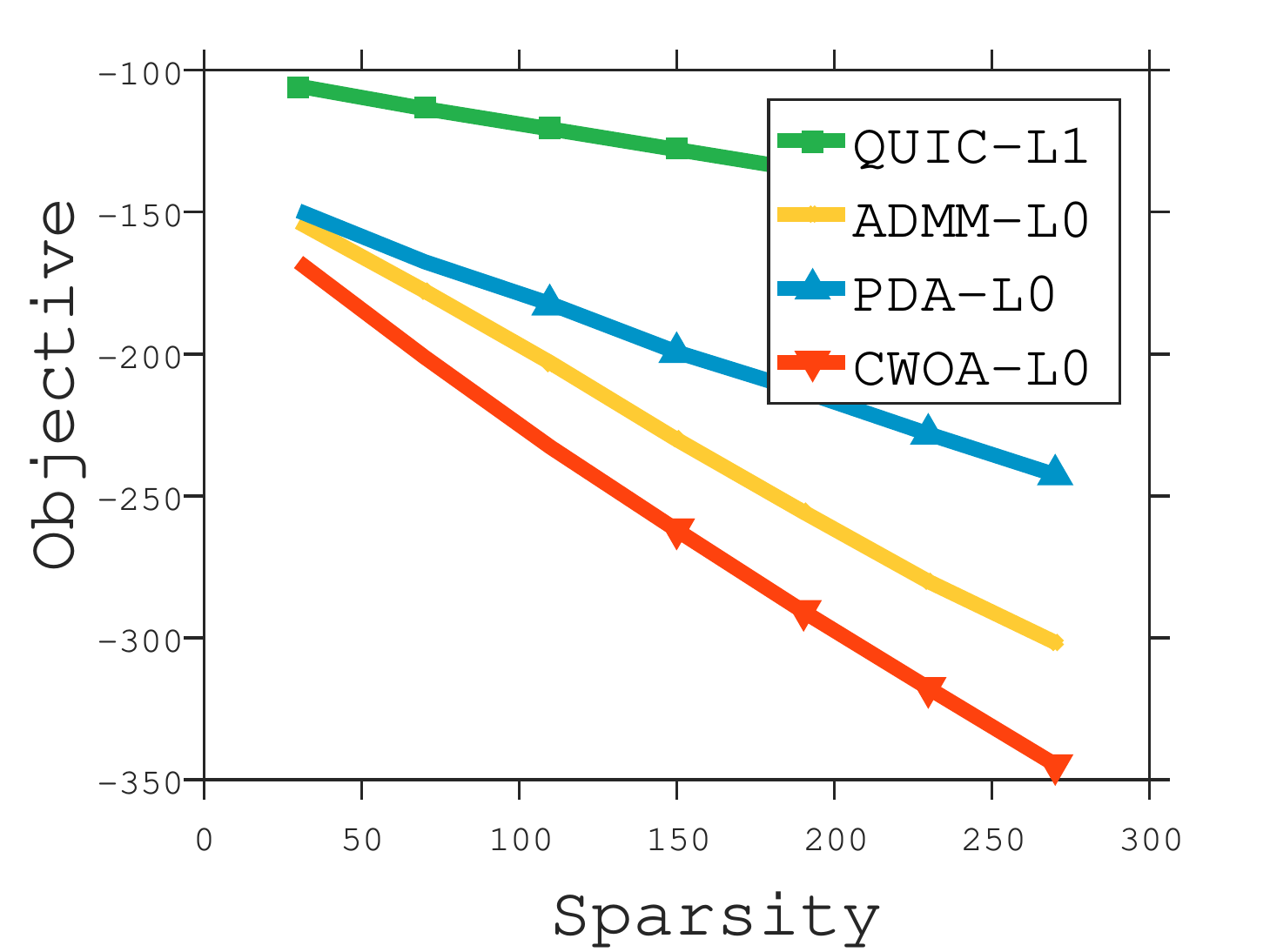}\vspace{-6pt}\caption{\scriptsize Real-World-usps}
      \end{subfigure}
      \begin{subfigure}{0.24\textwidth}\includegraphics[width=1\textwidth,height=\hone]{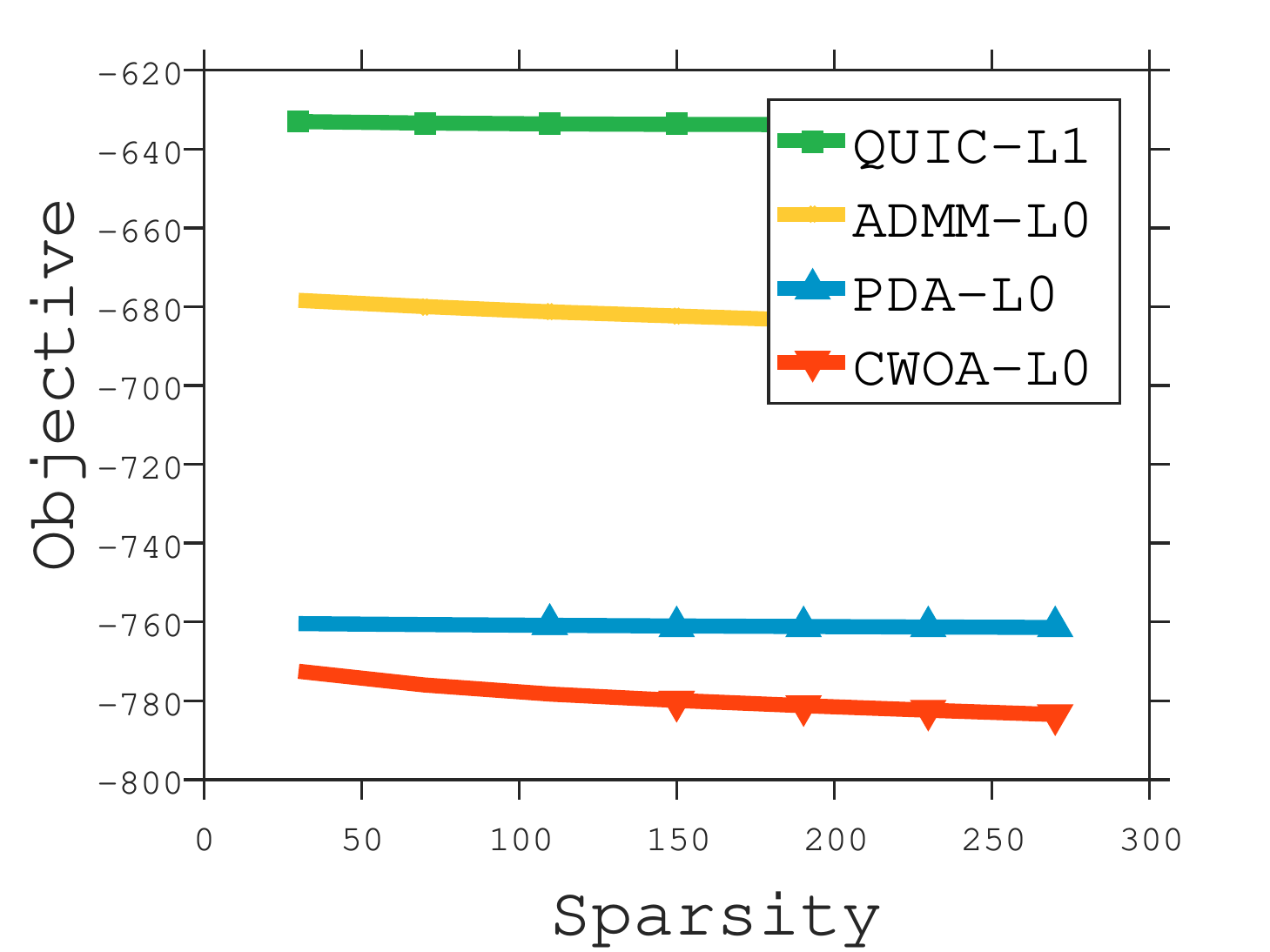}\vspace{-6pt}\caption{\scriptsize Real-World-w1a}
      \end{subfigure}
\vspace{-6pt}
\caption{Comparison of objective values for different methods.}
\label{fig:exp:fobj}

\captionsetup{singlelinecheck = on, format= hang, justification=justified, font=footnotesize, labelsep=space}
\centering
      \begin{subfigure}{0.24\textwidth}\includegraphics[width=1\textwidth,height=\htwo]{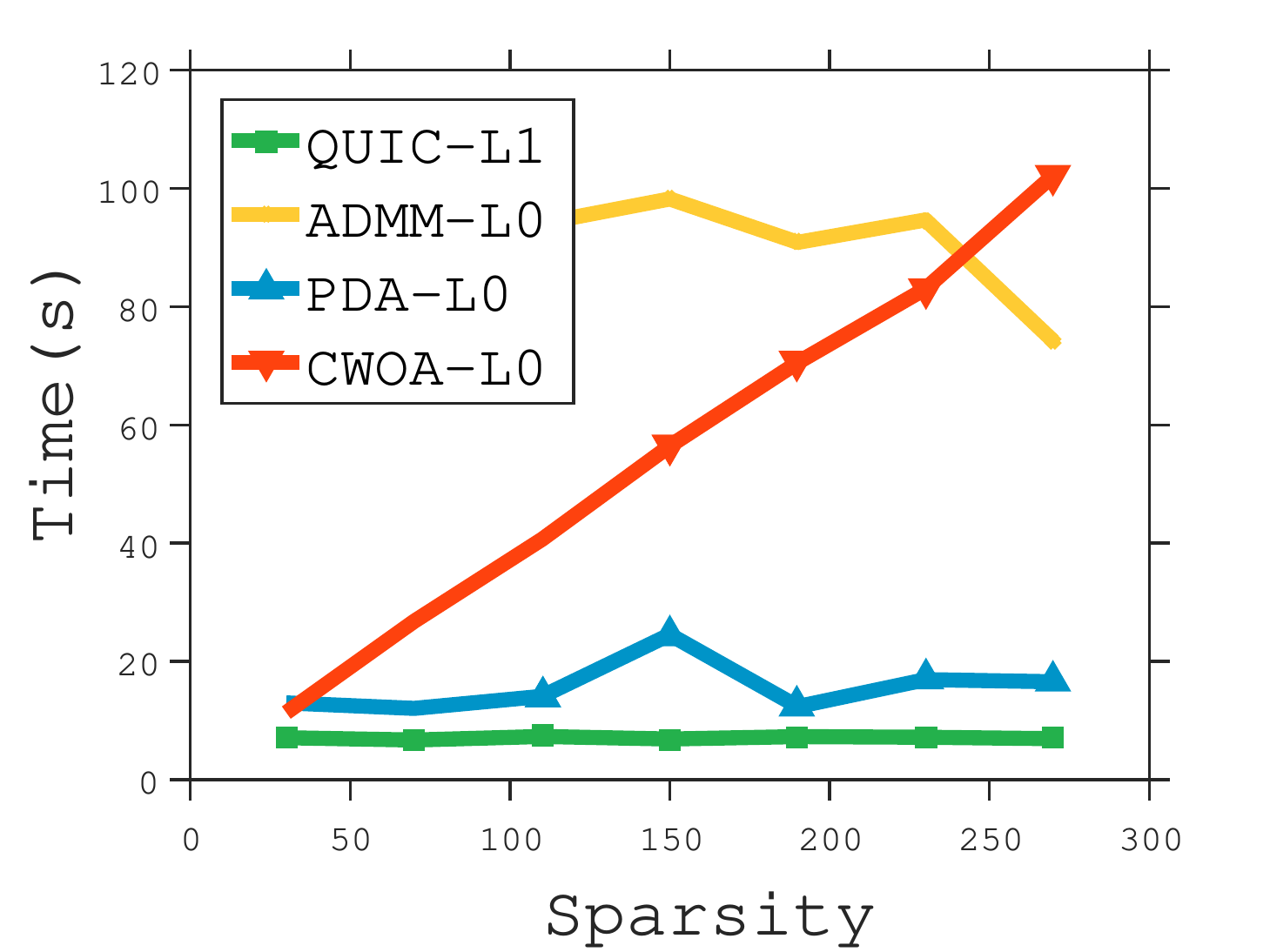}\vspace{-6pt}\caption{\scriptsize Gaussian-Random-500}
      \end{subfigure}
      \begin{subfigure}{0.24\textwidth}\includegraphics[width=1\textwidth,height=\htwo]{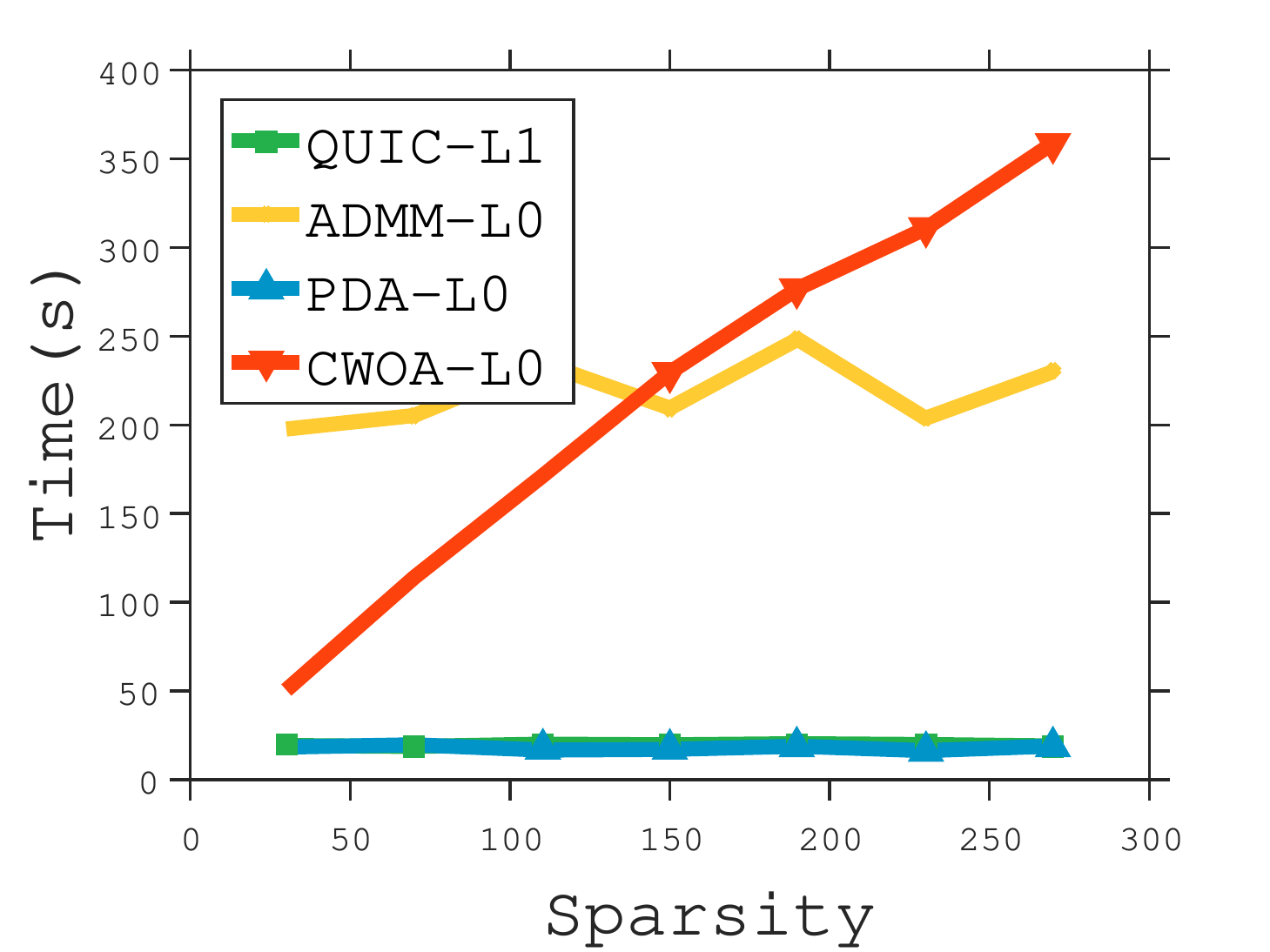}\vspace{-6pt}\caption{\scriptsize Gaussian-Random-1000}
      \end{subfigure}
      \begin{subfigure}{0.24\textwidth}\includegraphics[width=1\textwidth,height=\htwo]{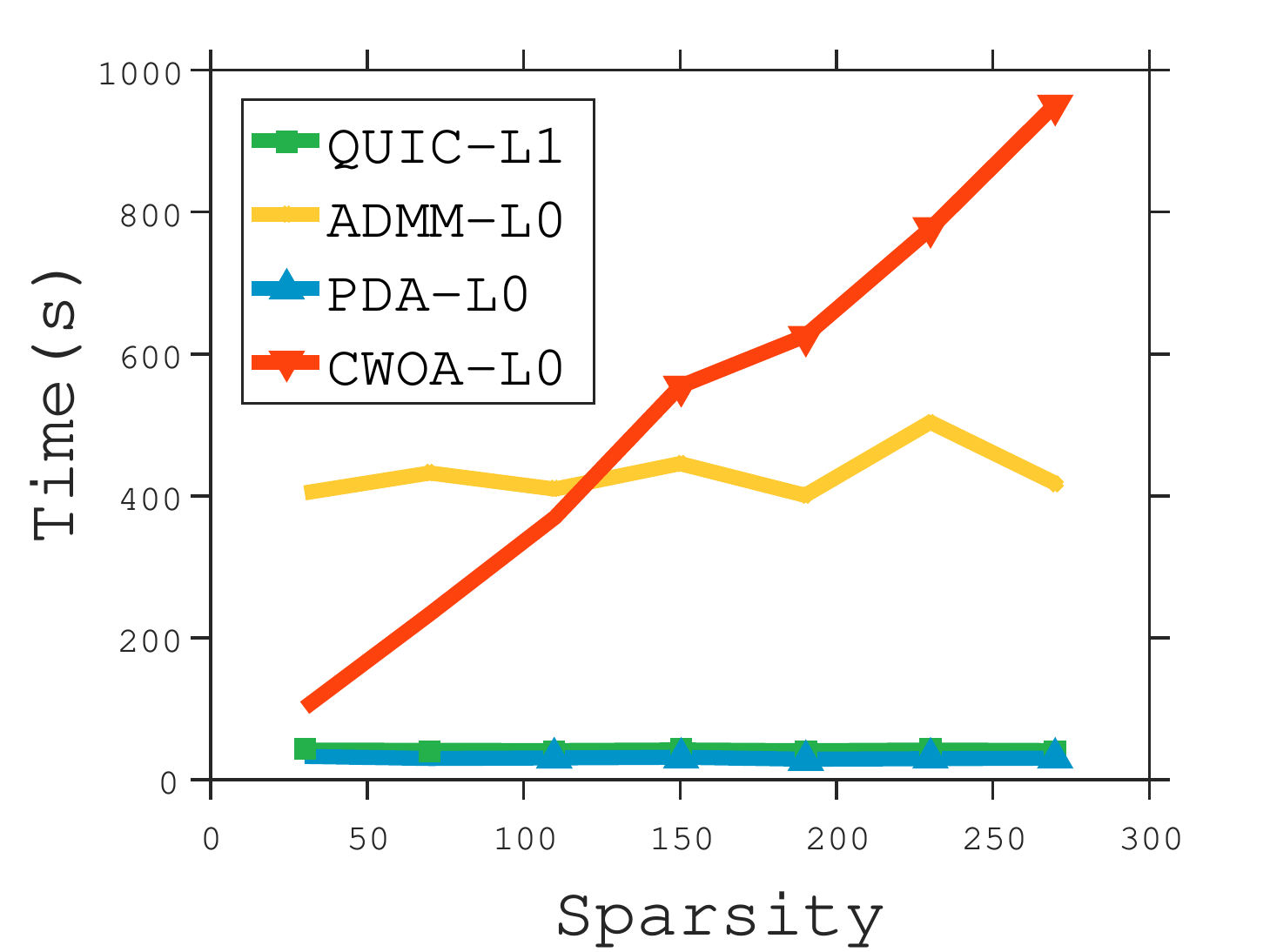}\vspace{-6pt}\caption{\scriptsize Gaussian-Random-1500}
      \end{subfigure}
      \begin{subfigure}{0.24\textwidth}\includegraphics[width=1\textwidth,height=\htwo]{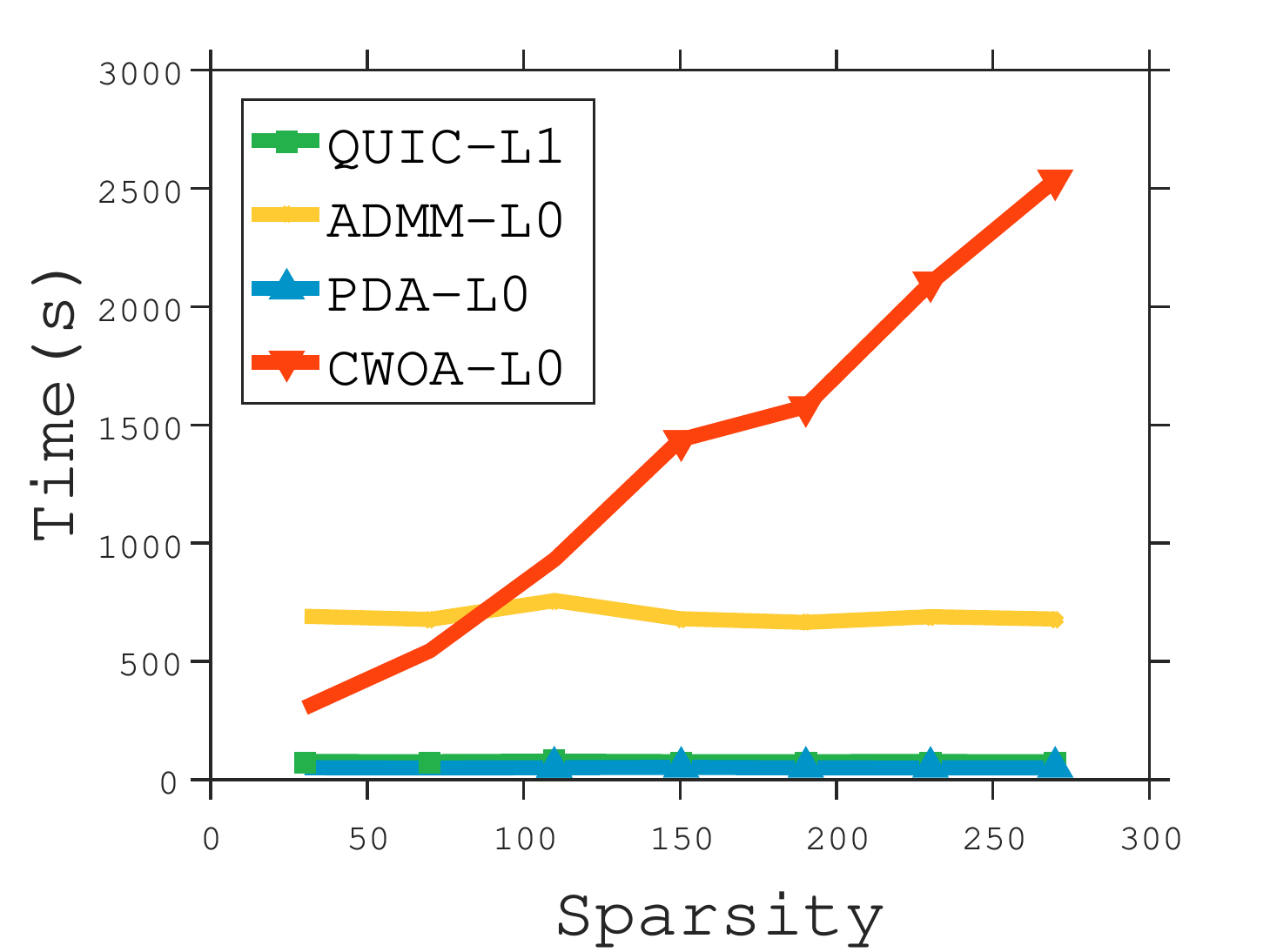}\vspace{-6pt}\caption{\scriptsize Gaussian-Random-2000}
      \end{subfigure}


      \begin{subfigure}{0.24\textwidth}\includegraphics[width=1\textwidth,height=\htwo]{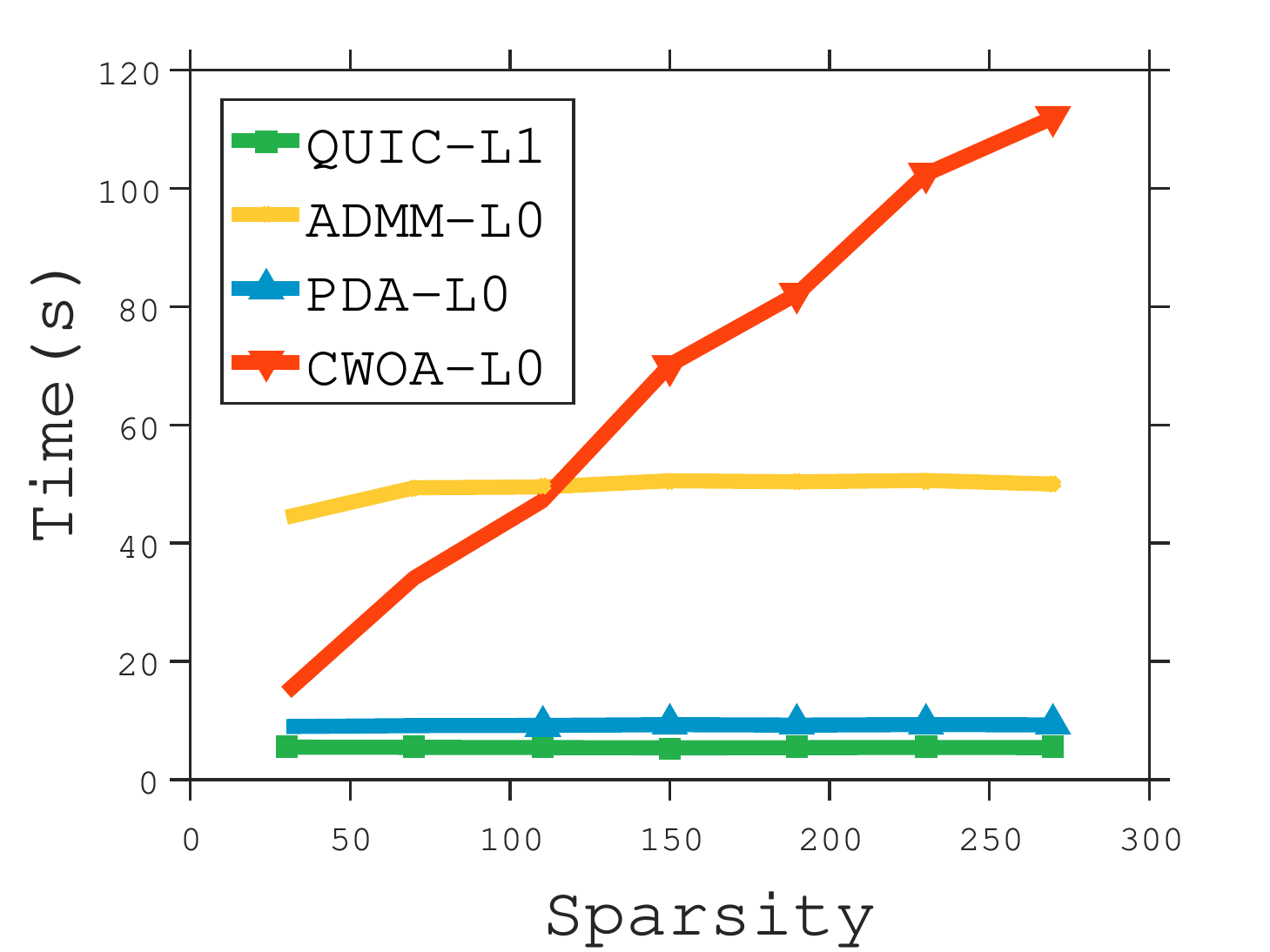}\vspace{-6pt}\caption{\scriptsize Sparse-Structured-500}
      \end{subfigure}
      \begin{subfigure}{0.24\textwidth}\includegraphics[width=1\textwidth,height=\htwo]{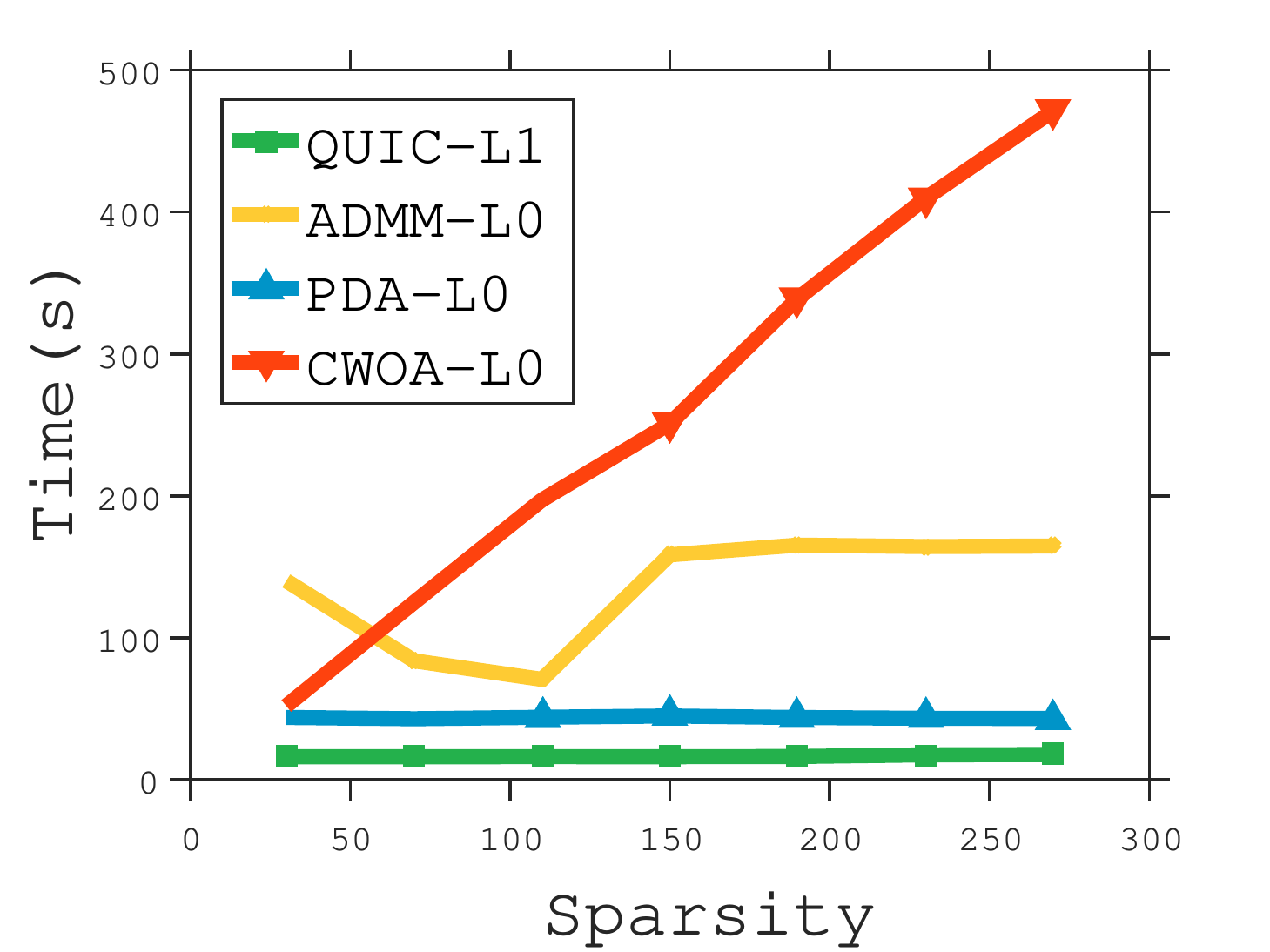}\vspace{-6pt}\caption{\scriptsize Sparse-Structured-1000}
      \end{subfigure}
      \begin{subfigure}{0.24\textwidth}\includegraphics[width=1\textwidth,height=\htwo]{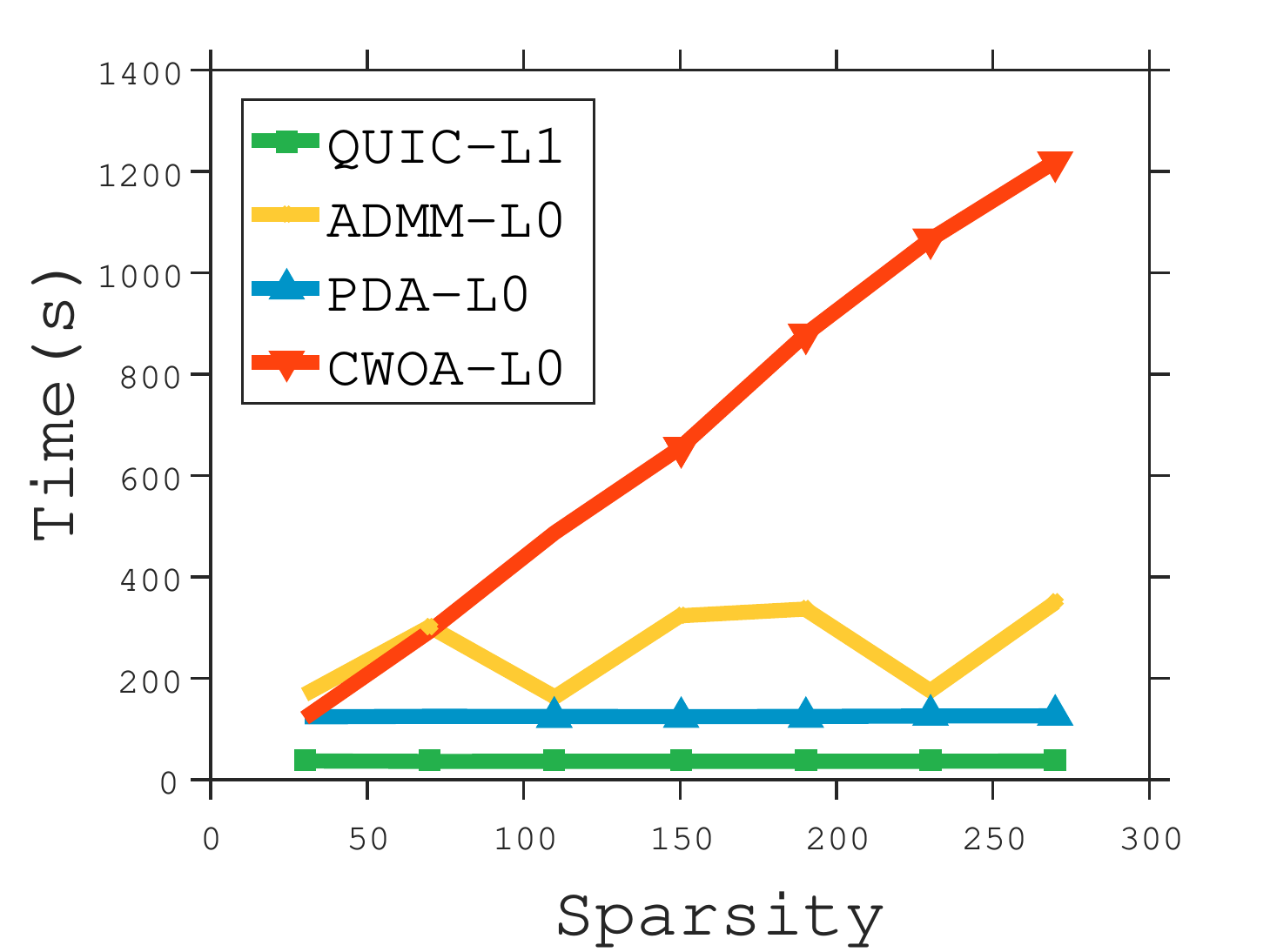}\vspace{-6pt}\caption{\scriptsize Sparse-Structured-1500}
      \end{subfigure}
     \begin{subfigure}{0.24\textwidth}\includegraphics[width=1\textwidth,height=\htwo]{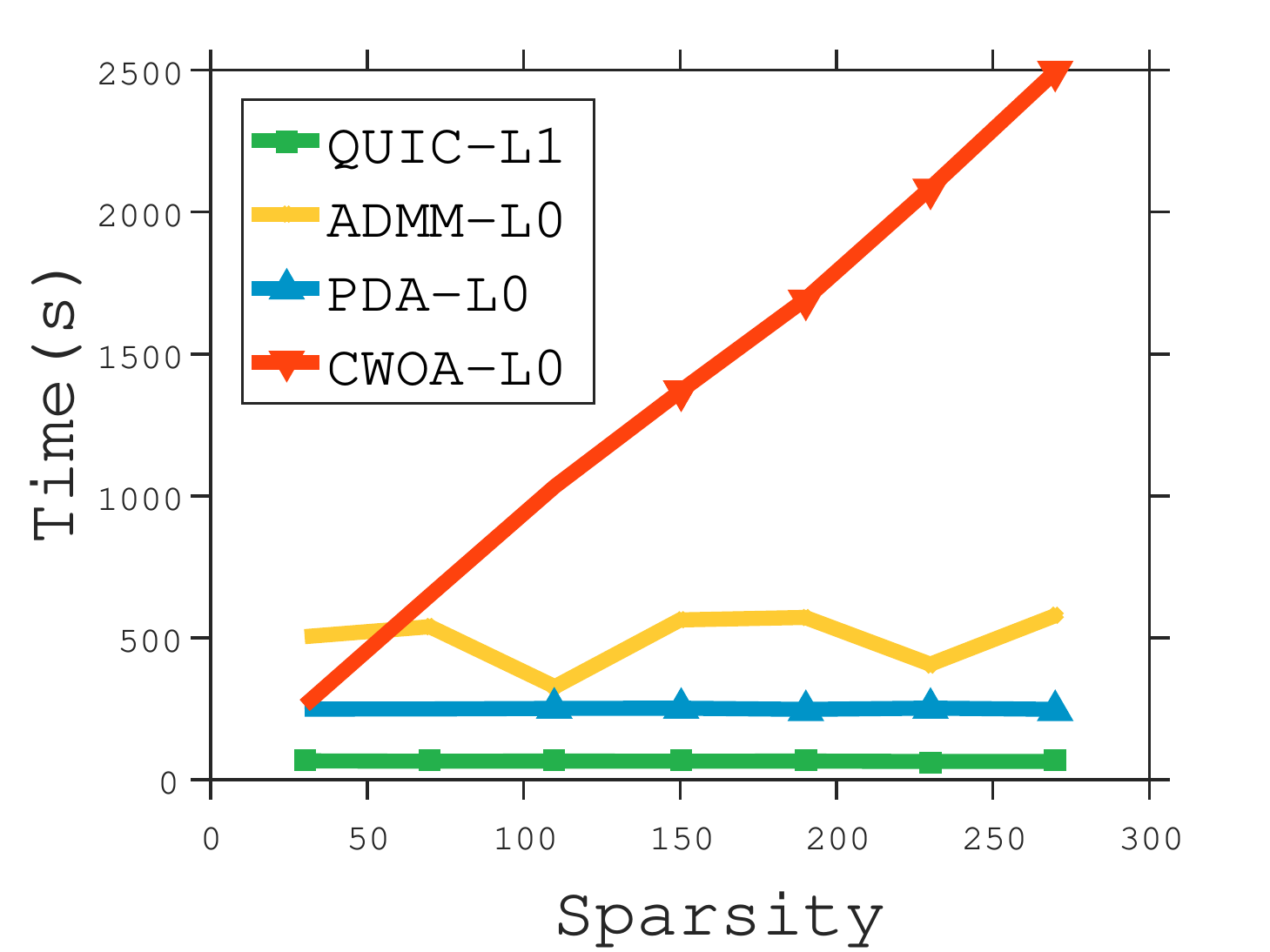}\vspace{-6pt}\caption{\scriptsize Sparse-Structured-2000}
      \end{subfigure}


      \begin{subfigure}{0.24\textwidth}\includegraphics[width=1\textwidth,height=\htwo]{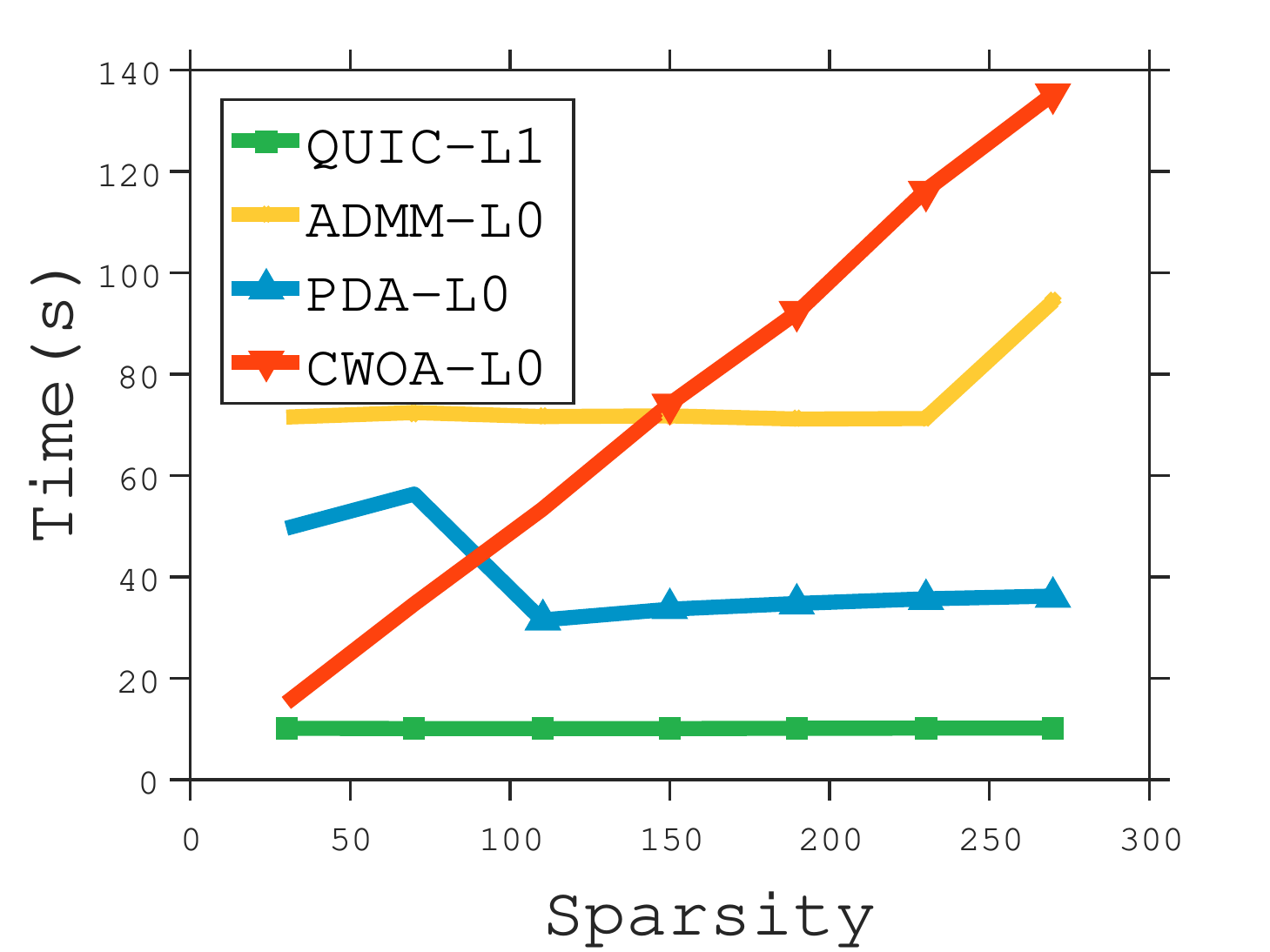}\vspace{-6pt}\caption{\scriptsize Real-World-isolet}
      \end{subfigure}
      \begin{subfigure}{0.24\textwidth}\includegraphics[width=1\textwidth,height=\htwo]{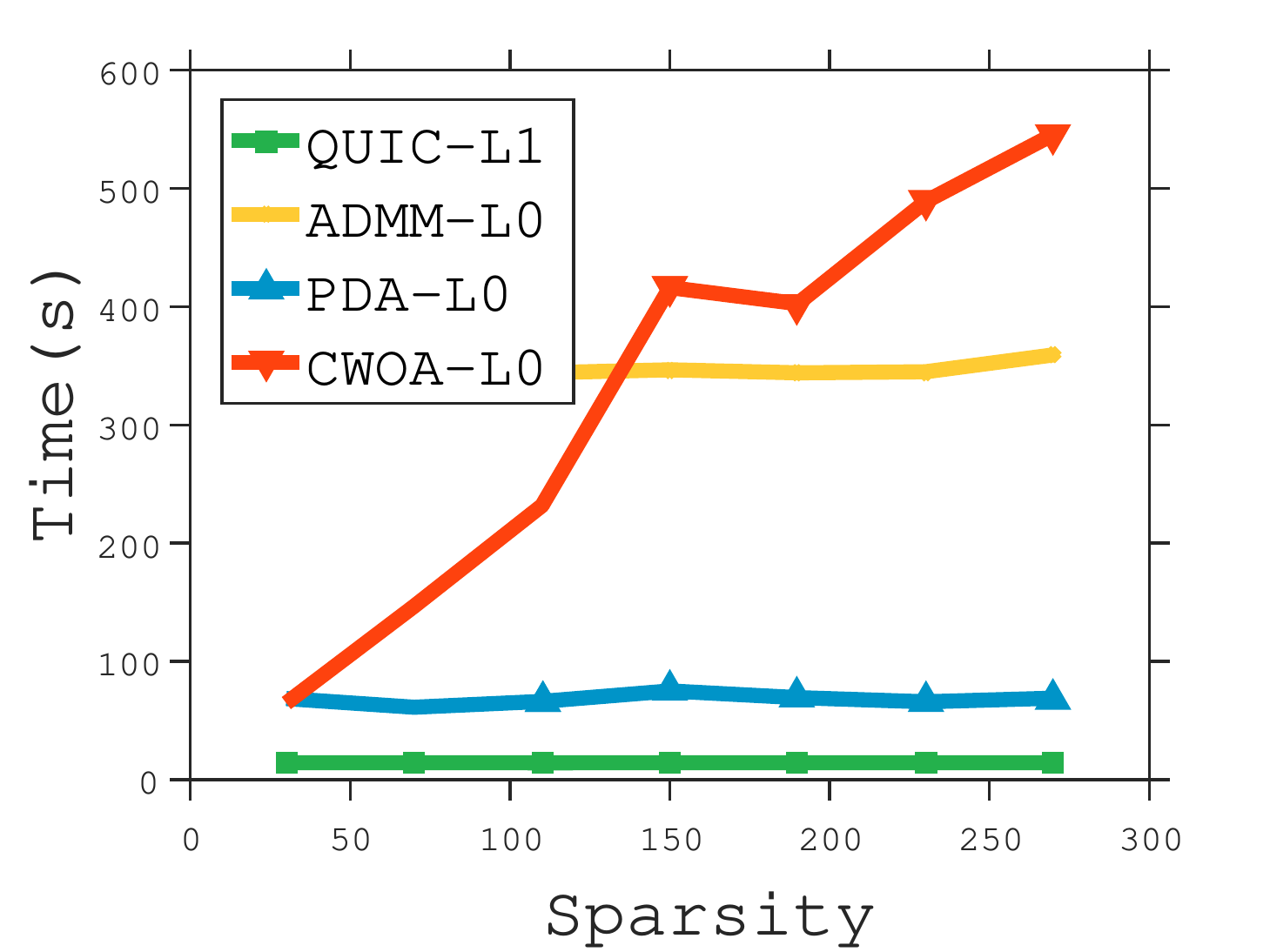}\vspace{-6pt}\caption{\scriptsize Real-World-mnist}
      \end{subfigure}
      \begin{subfigure}{0.24\textwidth}\includegraphics[width=1\textwidth,height=\htwo]{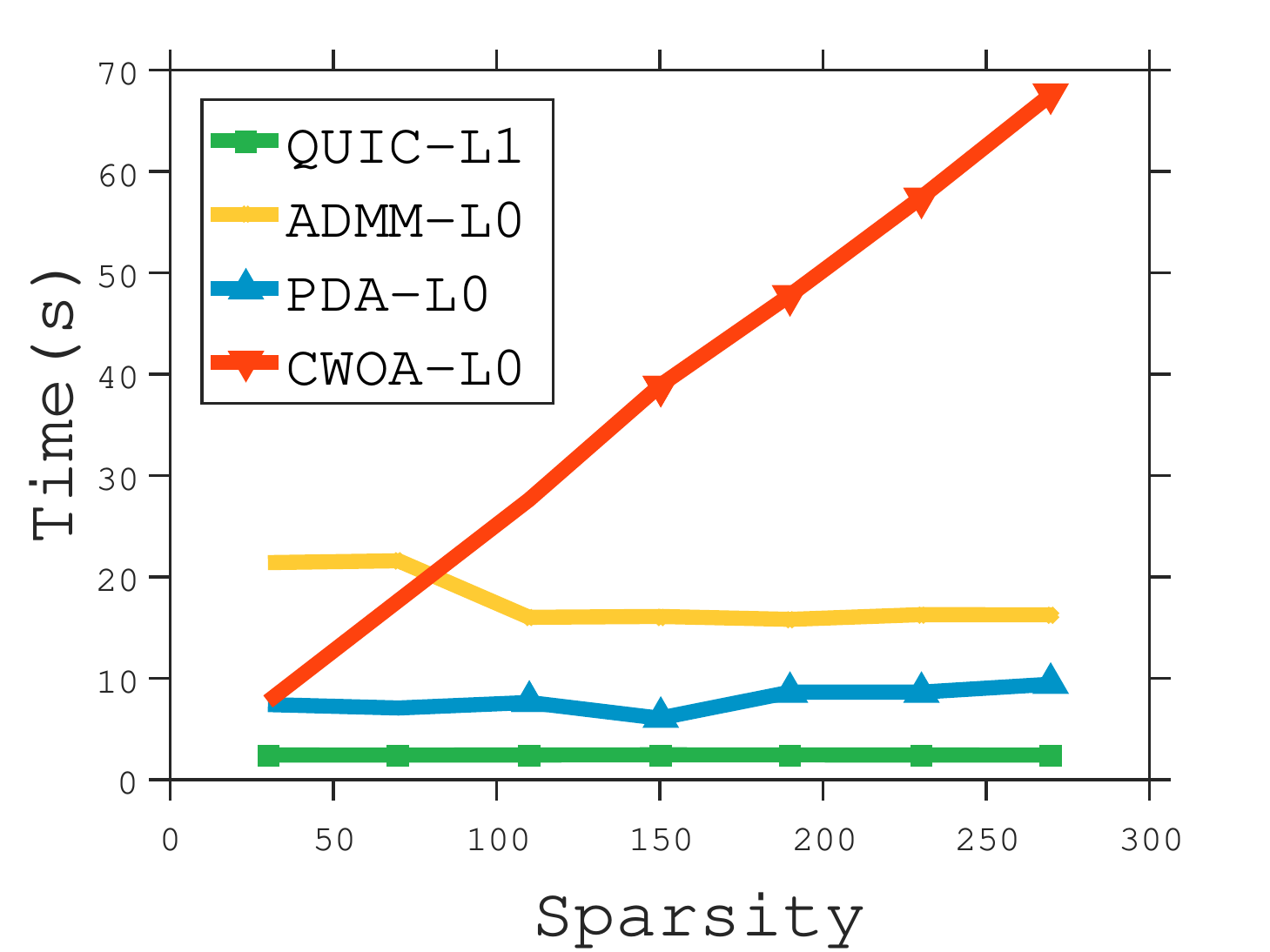}\vspace{-6pt}\caption{\scriptsize Real-World-usps}
      \end{subfigure}
      \begin{subfigure}{0.24\textwidth}\includegraphics[width=1\textwidth,height=\htwo]{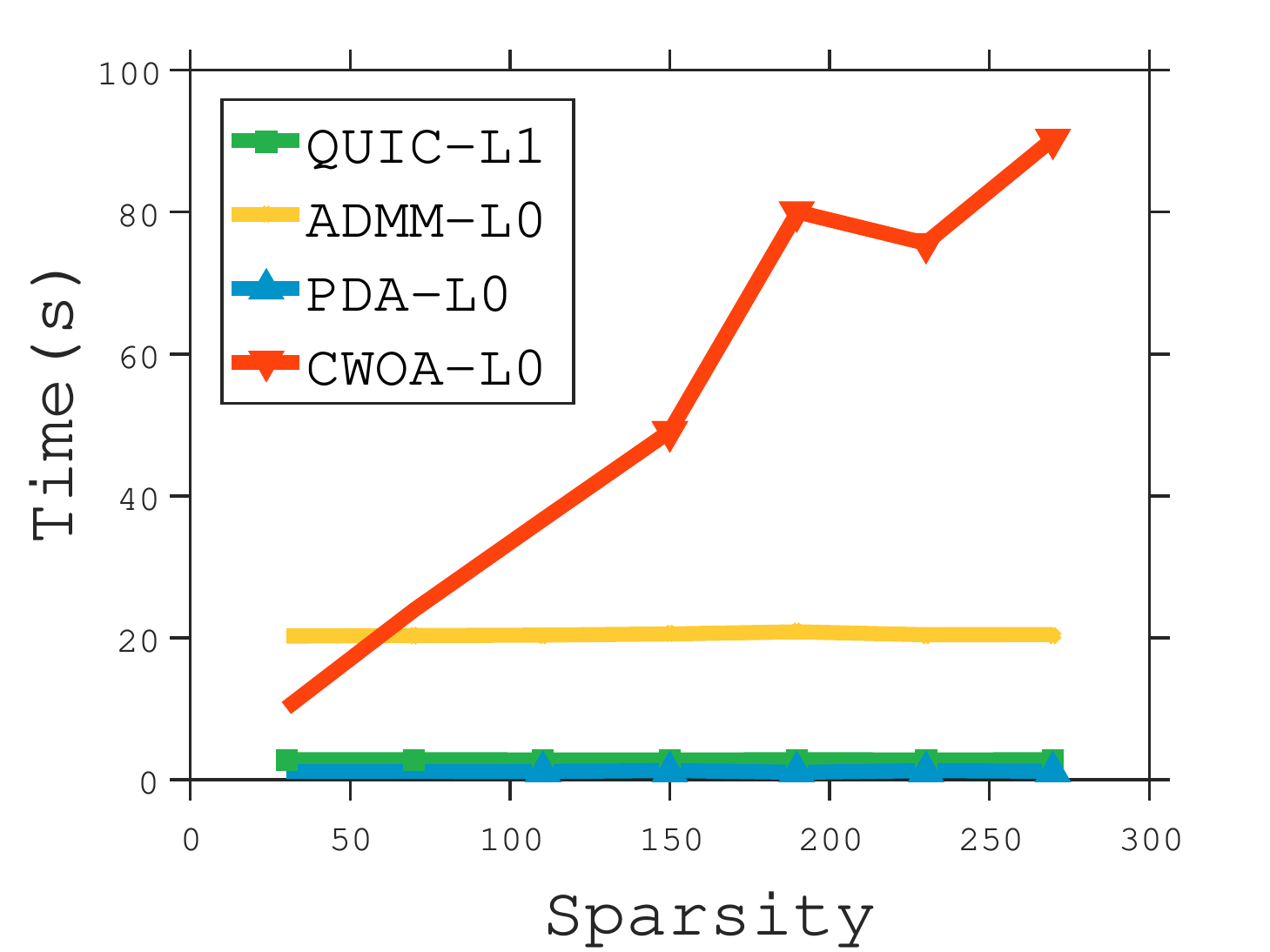}\vspace{-6pt}\caption{\scriptsize Real-World-w1a}
      \end{subfigure}
\vspace{-6pt}
\caption{Comparison of computational time for different methods.}
\label{fig:exp:time}
\end{figure*}

\begin{figure*} [!t]
\captionsetup{singlelinecheck = on, format= hang, justification=justified, font=footnotesize, labelsep=space}
\centering
      \begin{subfigure}{0.24\textwidth}\includegraphics[width=1\textwidth,height=\htwo]{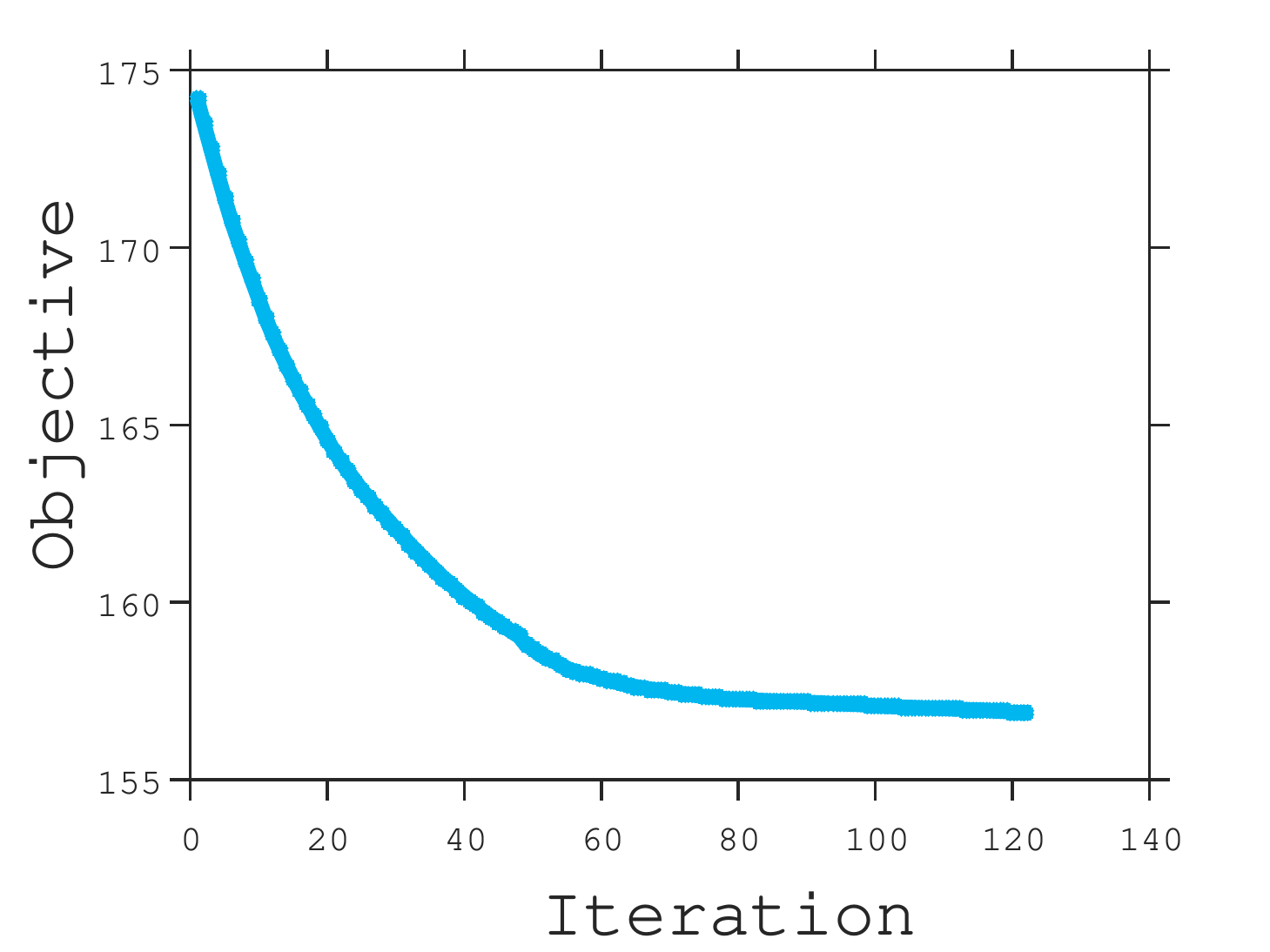}\vspace{-6pt}\caption{\scriptsize Gaussian-Random-100}
      \label{unknown-500}
      \end{subfigure}5
      \begin{subfigure}{0.24\textwidth}\includegraphics[width=1\textwidth,height=\htwo]{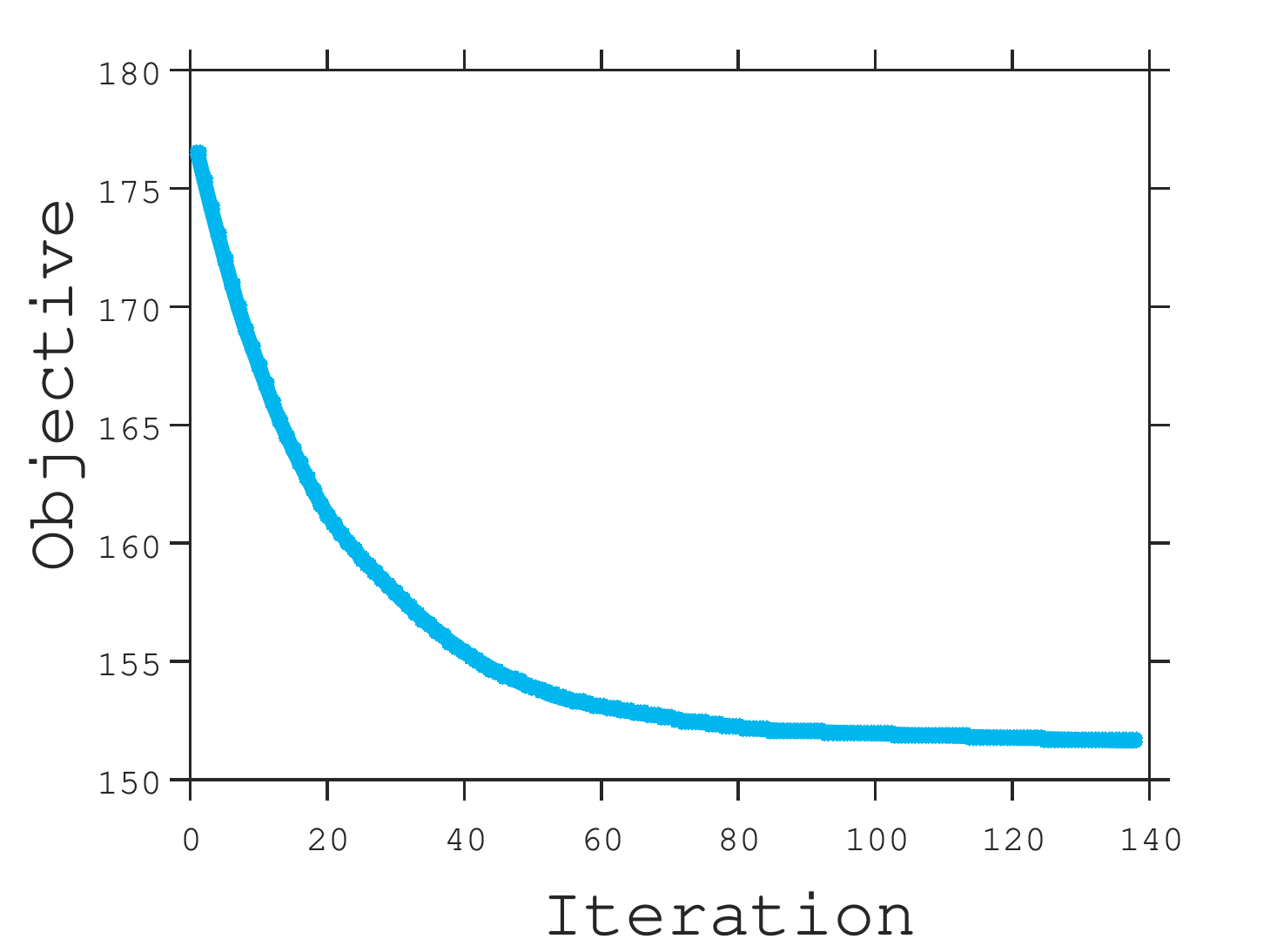}\vspace{-6pt}\caption{\scriptsize Gaussian-Random-100}
      \end{subfigure}
      \begin{subfigure}{0.24\textwidth}\includegraphics[width=1\textwidth,height=\htwo]{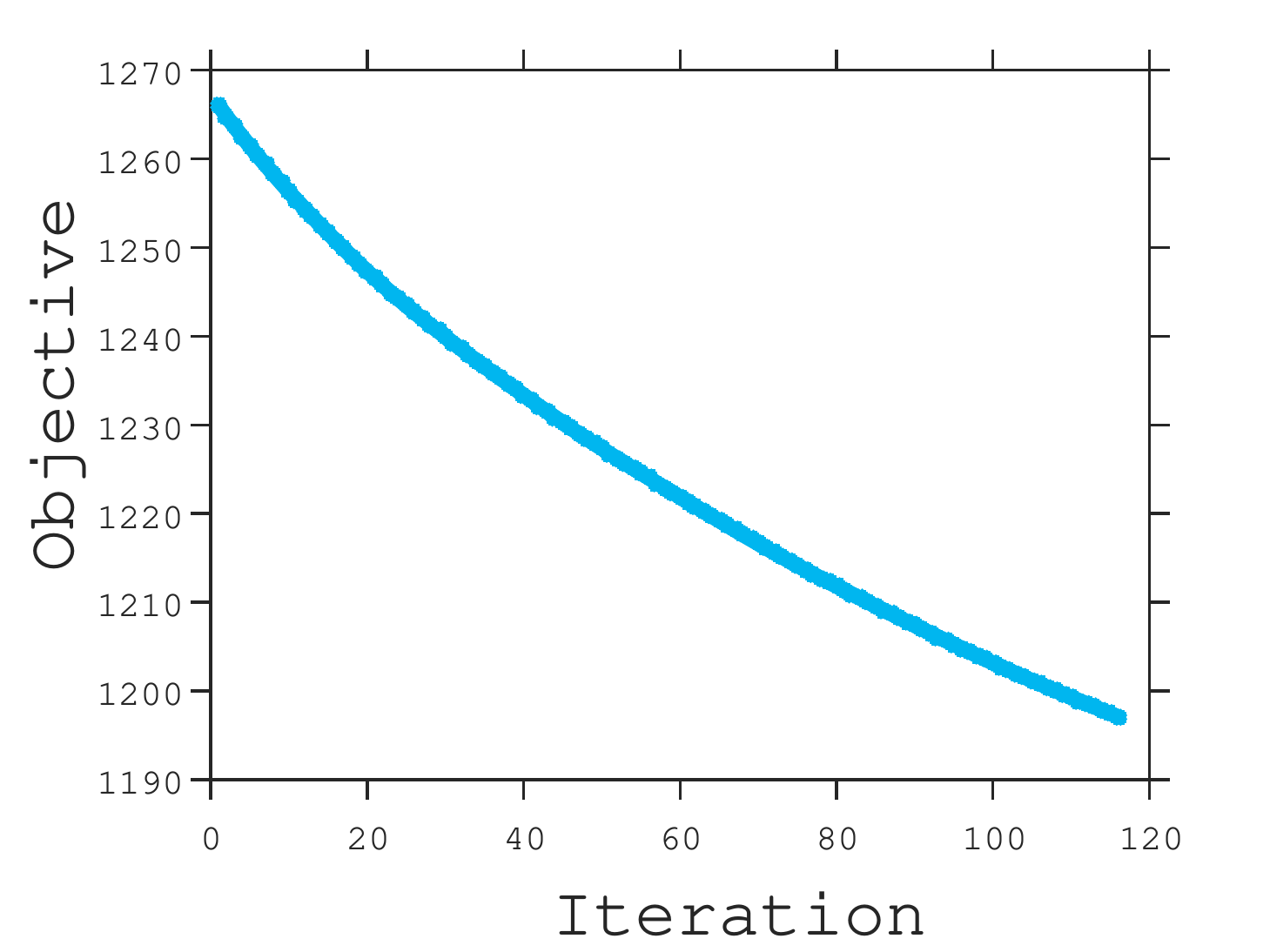}\vspace{-6pt}\caption{\scriptsize Gaussian-Random-500}
      \end{subfigure}
      \begin{subfigure}{0.24\textwidth}\includegraphics[width=1\textwidth,height=\htwo]{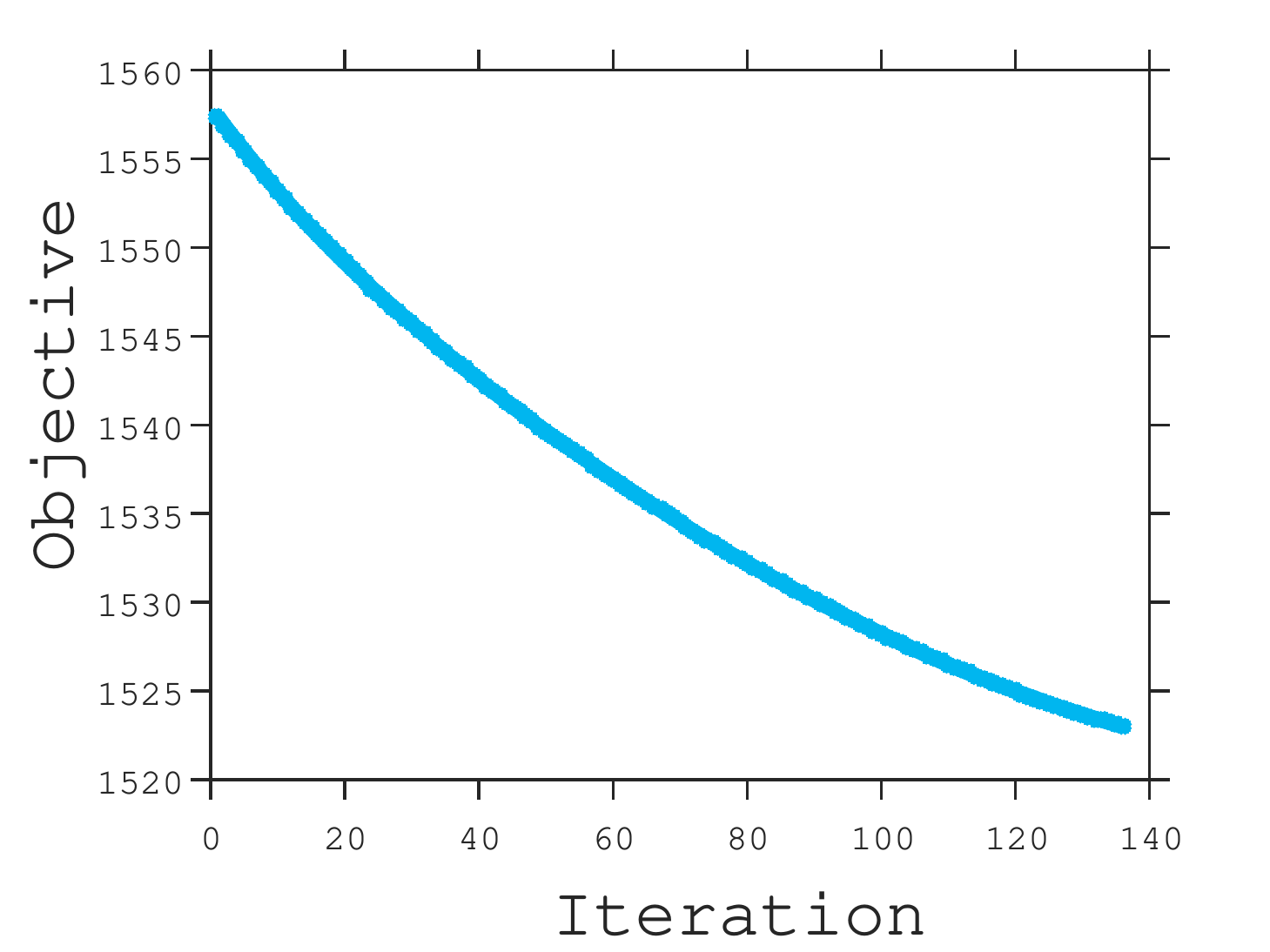}\vspace{-6pt}\caption{\scriptsize Gaussian-Random-500}
      \end{subfigure}
\caption{Convergence curve for Algorithm \ref{alg:main} on different data sets. $s$ is set to 230 for (a) and (c), and $s$ is set to 270 for (b) and (d).}
\label{fig:exp:converge:1}

\captionsetup{singlelinecheck = on, format= hang, justification=justified, font=footnotesize, labelsep=space}
\centering
      \begin{subfigure}{0.24\textwidth}\includegraphics[width=1\textwidth,height=\htwo]{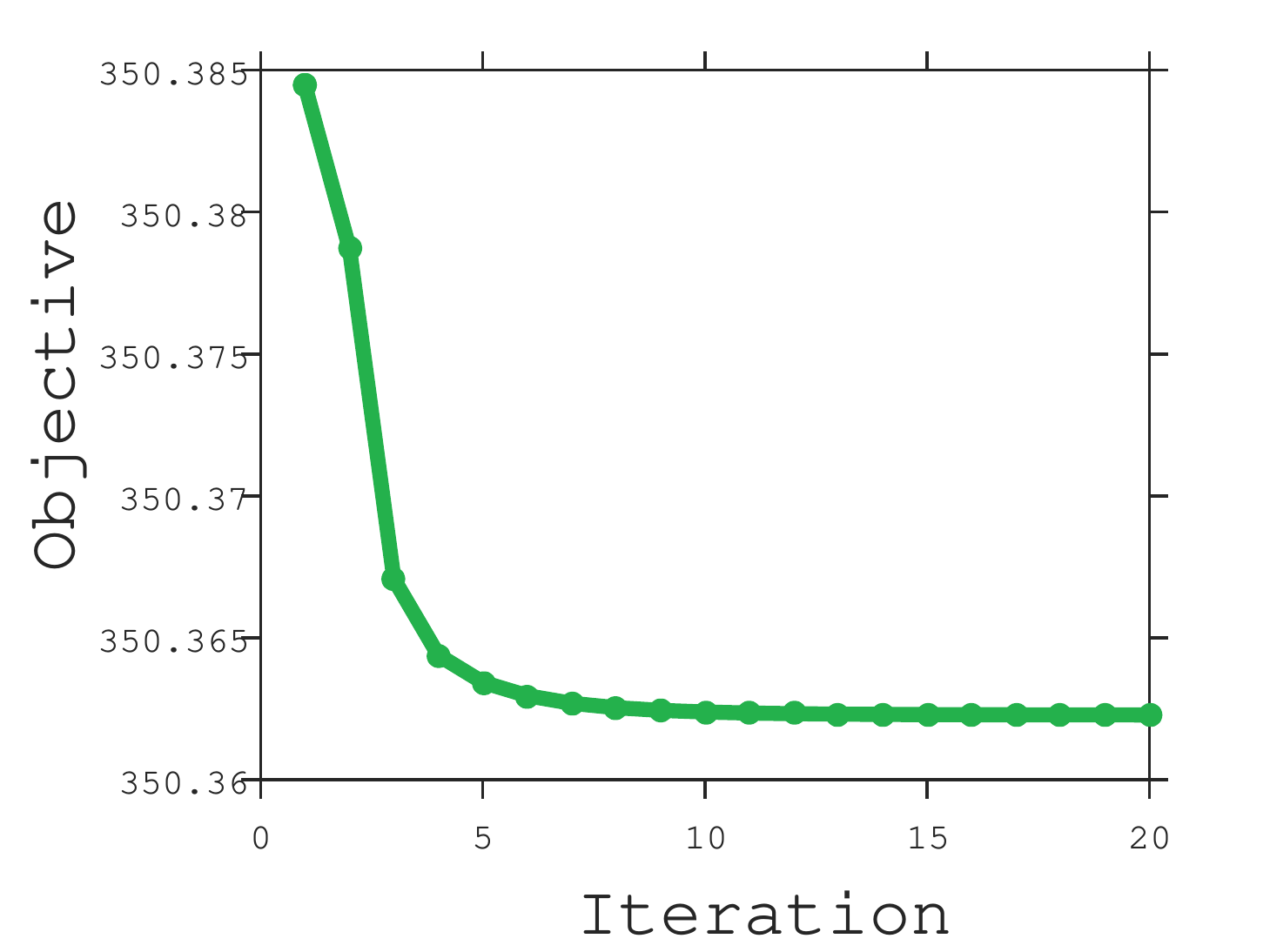}\vspace{-6pt}\caption{\scriptsize $k=20$}
      \label{unknown-500}
      \end{subfigure}
      \begin{subfigure}{0.24\textwidth}\includegraphics[width=1\textwidth,height=\htwo]{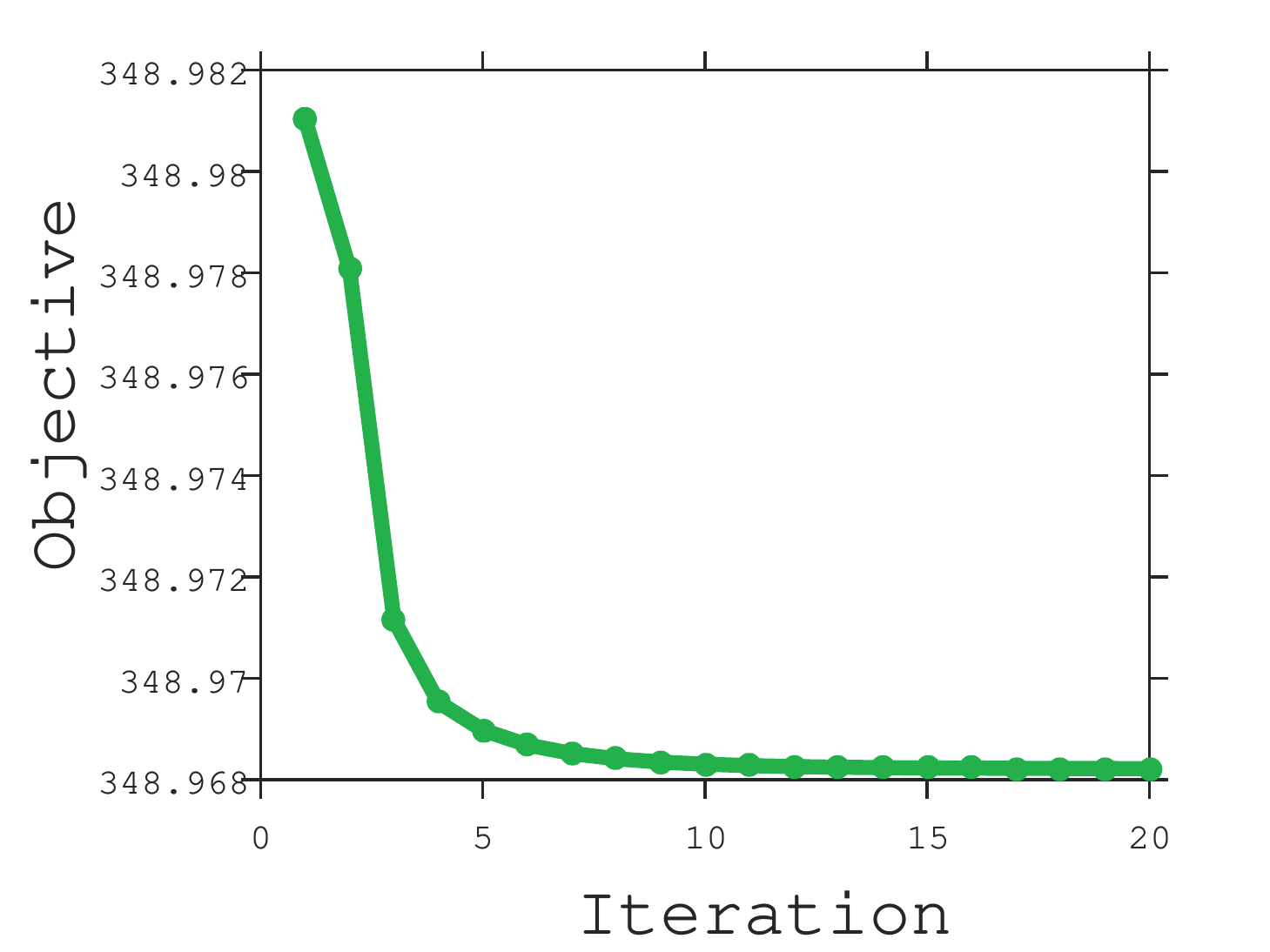}\vspace{-6pt}\caption{\scriptsize $k=30$}
      \end{subfigure}
      \begin{subfigure}{0.24\textwidth}\includegraphics[width=1\textwidth,height=\htwo]{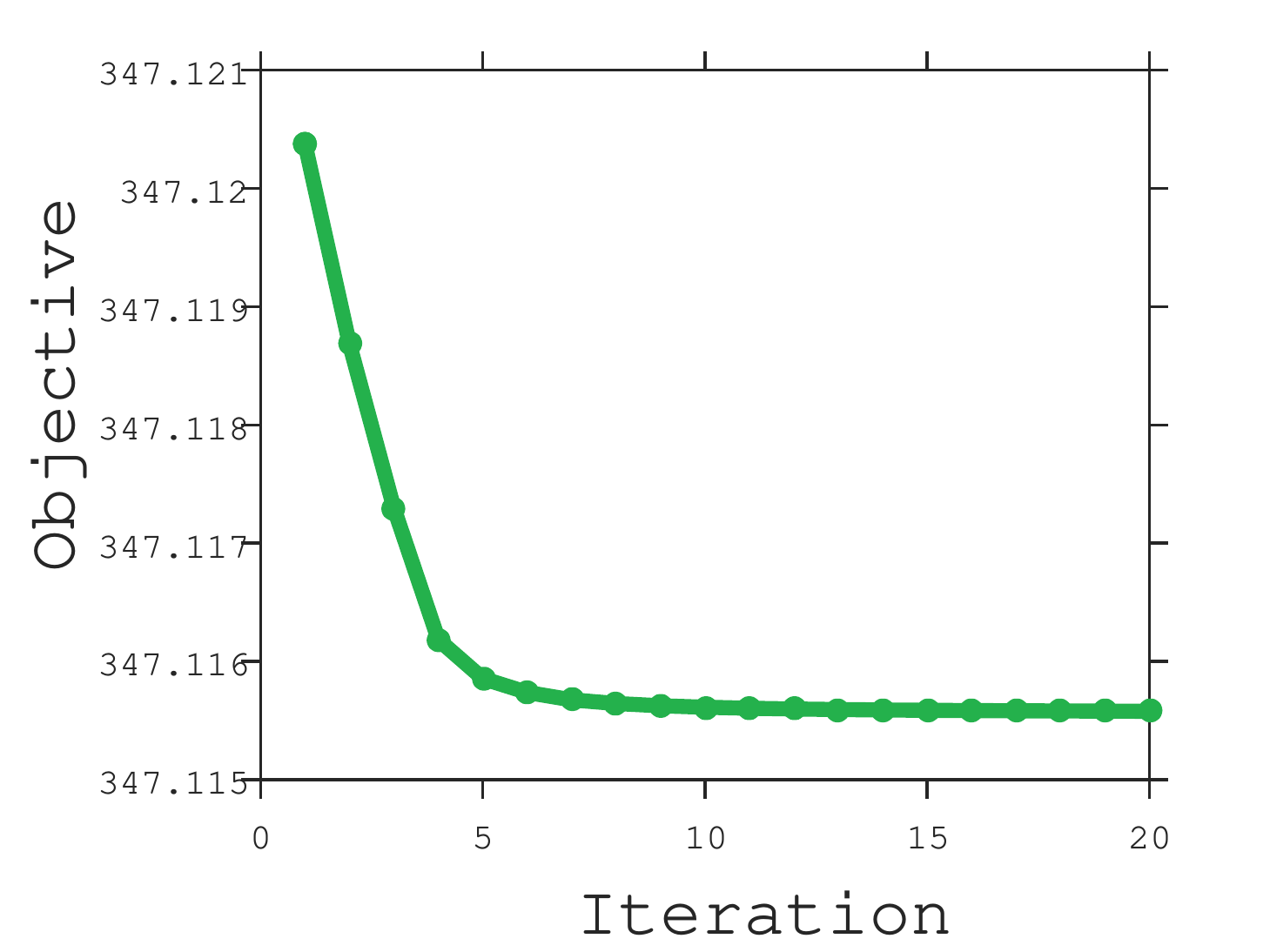}\vspace{-6pt}\caption{\scriptsize $k=50$}
      \end{subfigure}
      \begin{subfigure}{0.24\textwidth}\includegraphics[width=1\textwidth,height=\htwo]{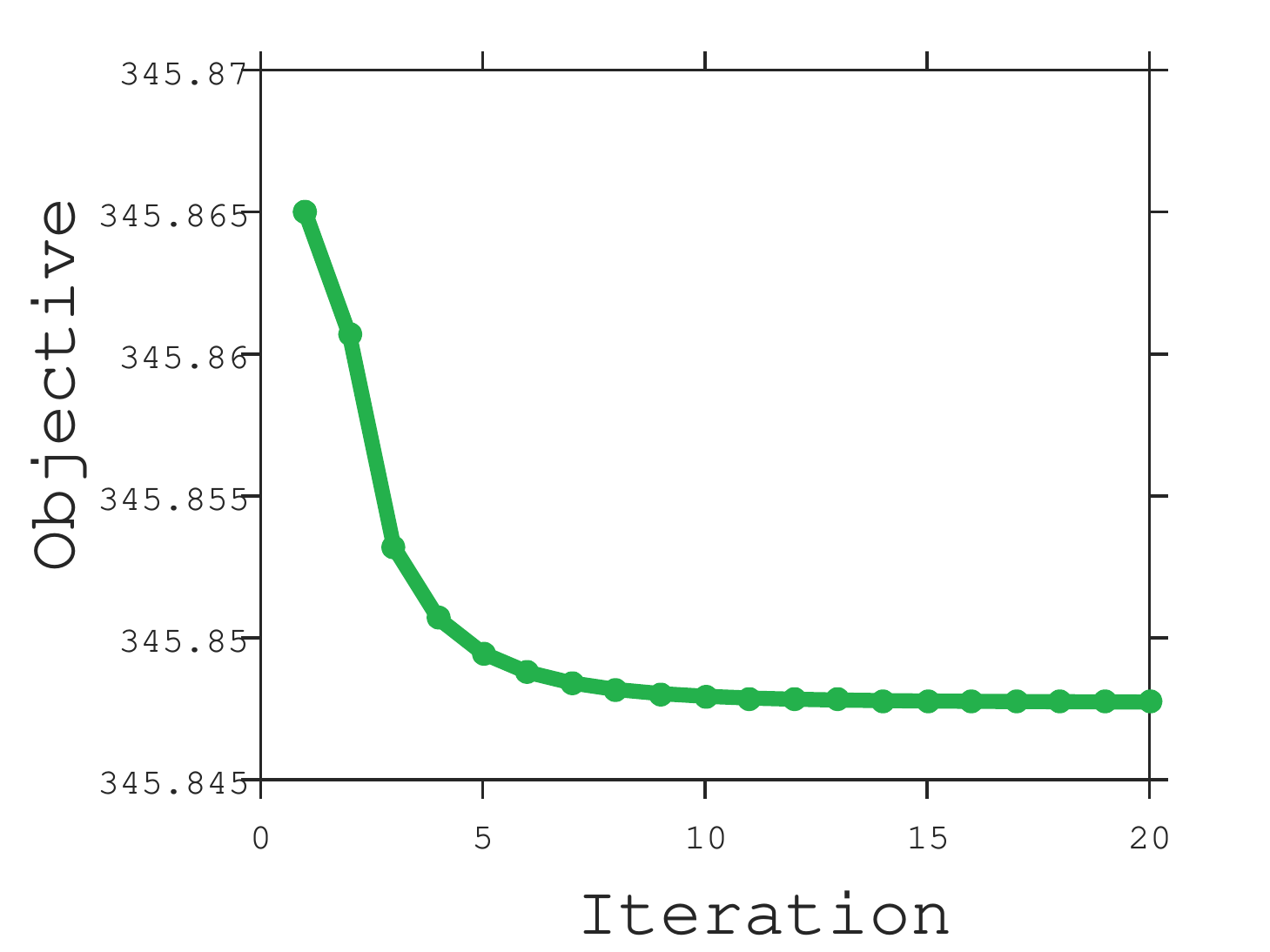}\vspace{-6pt}\caption{\scriptsize $k=70$}
      \end{subfigure}
\caption{Convergence curve for Algorithm \ref{alg:newtoncg} on Real-World-mnist data set with $s=150$ for different iterations $t=\{20,30,50,70\}$ of Algorithm \ref{alg:main}.}
\label{fig:exp:converge:2}
\end{figure*}

\begin{figure*} [!th]
\captionsetup{singlelinecheck = on, format= hang, justification=justified, font=footnotesize, labelsep=space}
\centering
      \begin{subfigure}{0.24\textwidth}\includegraphics[width=1\textwidth]{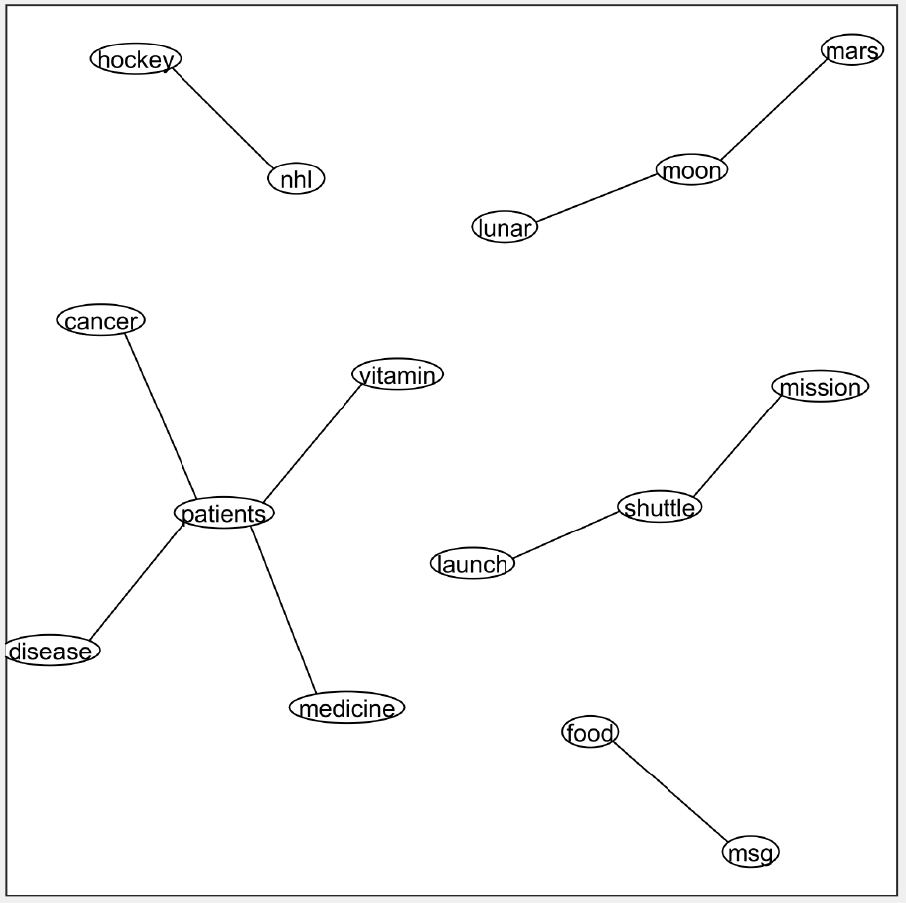}\vspace{-6pt}\caption{\scriptsize QUIC-L1}
      \label{unknown-500}
      \end{subfigure}
      \begin{subfigure}{0.24\textwidth}\includegraphics[width=1\textwidth]{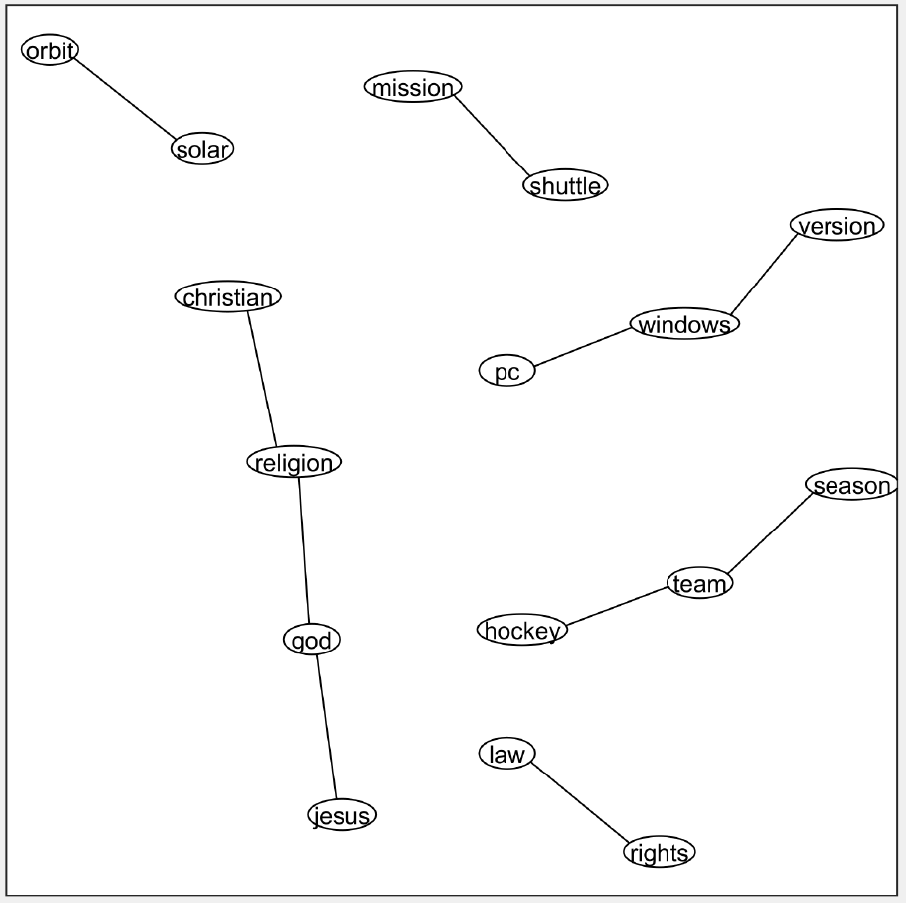}\vspace{-6pt}\caption{\scriptsize ADMM-L0}
      \end{subfigure}
      \begin{subfigure}{0.24\textwidth}\includegraphics[width=1\textwidth]{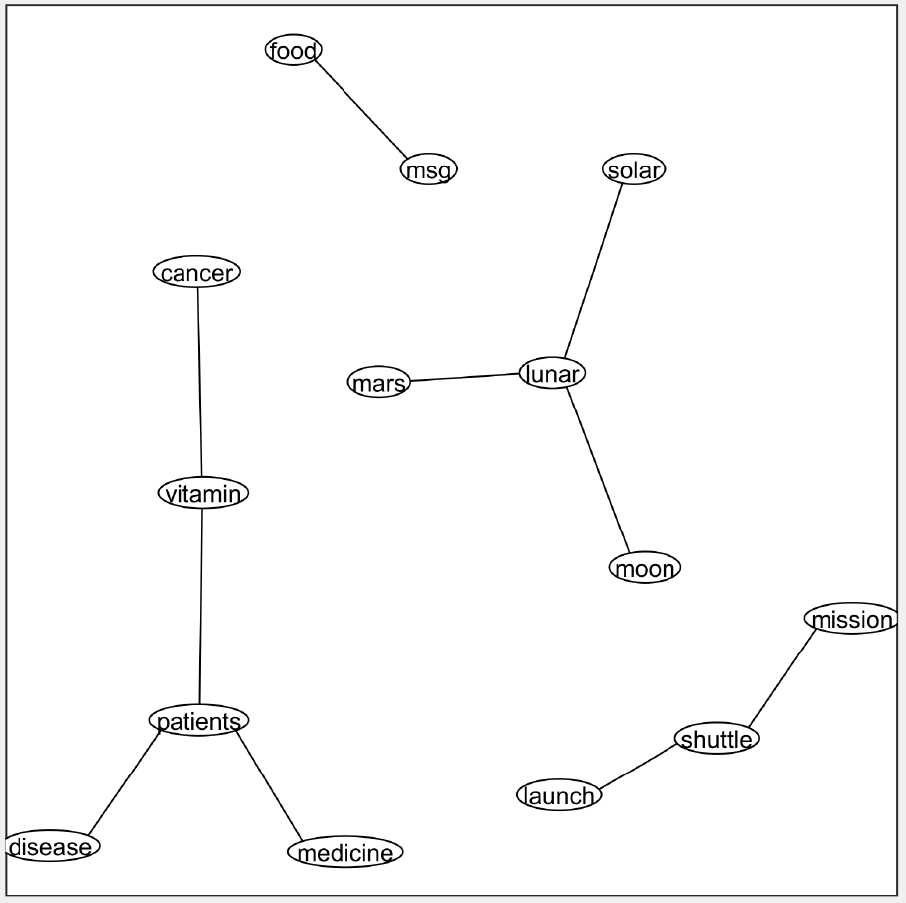}\vspace{-6pt}\caption{\scriptsize PDA-L0}
      \end{subfigure}
      \begin{subfigure}{0.24\textwidth}\includegraphics[width=1\textwidth]{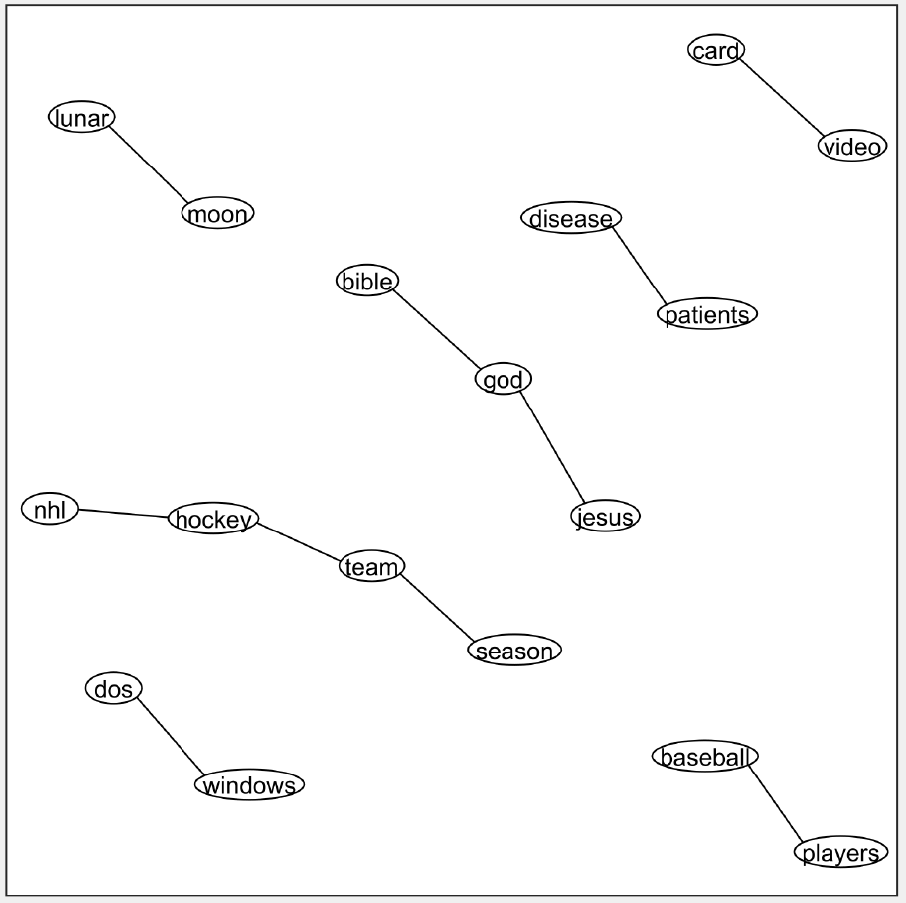}\vspace{-6pt}\caption{\scriptsize CWOA-L0}
      \end{subfigure}
\caption{A practical example on `20newsgroups' data set. The objective values generated by the four methods are $717.04$, $715.88$, $717.45$, and $714.76$, respectively.}
\label{fig:exp:example}
\end{figure*}

\section{Experiments}\label{expr}

This section demonstrates the performance of the proposed Coordinated-Wise Optimization Algorithm (CWOA) on synthetic and real-world data sets. All codes are implemented in MATLAB on an Intel 3.20GHz CPU with 8 GB RAM. Some Matlab code can be found in the authors' research webpages.



$\bullet$ \bbb{Data sets}. Three types of data sets are considered in our experiments. (i) Gaussian random data sets. We generate a data matrix $\bbb{Z}\in \mathbb{R}^{m\times n}$ sampled from a standard normal distribution. The parameter $m$ is fixed to $500$. The covariance matrix $\bbb{\Sigma}$ is computed by $\bbb{\Sigma} = 1/(m-1) \sum_{i=1}^m (\bbb{z_i} - \bbb{\mu})(\bbb{z_i} - \bbb{\mu})^T$, where $\bbb{z_i}$ denotes $i^{th}$ column of $\bbb{Z}$ and $\bbb{\mu} = \frac{1}{m} \sum_{i=1}^m\bbb{z_i}\in \mathbb{R}^{n\times 1}$. We consider different values for $n \in\{500,1000,1500,2000\}$ and denote the data sets as `Gaussian-Random-n'. (ii) Sparse-structured data sets. We generate the synthetic data in a similar manner as described in \cite{lu2013}. Roughly speaking, we first generate a true inverse covariance matrix $\bbb{X}^*$ which only contains $p$ non-zero entries with fixing $p=500$. Then we inject Gaussian noise to $\bbb{X}^*$ to obtain a noisy covariance matrix $\bbb{\Sigma}$. We consider different values for $n \in\{500,1000,1500,2000\}$ and denote the data sets as 'Sparse-Structure-n'. (iii) Real-world data sets. We use four well-known real-world data sets \{`isolet', `mnist', `usps', `w1a'\} in our experiments, all of which can be download in the LIBSVM website \footnote{\url{https://www.csie.ntu.edu.tw/~cjlin/libsvmtools/datasets}}. The size of the data sets are $7797\times617$, $10000\times 780$, $9298\times256$ and $49749\times 300$, respectively. We construct $\bbb{\Sigma}$ from the data sets using the same strategy as in Gaussian random data sets.

$\bullet$ \bbb{Compared methods}. We compared the following methods. \bbb{(a)} QUIC applies a Newton-like method \cite{HsiehSDR14} to solve the convex $\ell_1$ regularized problem \footnote{Code: \url{http://www.cs.utexas.edu/~sustik/QUIC}}. Since this method cannot control the sparsity of the solution, we solve the convex problem where the regulation parameter is swept over $2^{\{-10,-9, ..., 10\}}$. Finally, the solution that leads to smallest objective value after a hard thresholding projection (which reduces to setting the small values of the solution in magnitude to 0) is selected. We use the default stopping criterion for QUIC-L1. \bbb{(b)} ADMM directly applies alternating direction method of multipliers to solve the non-convex $\ell_0$ norm problem in(\ref{alg:main}). \bbb{(c)} Penalty Decomposition Algorithm (PDA) \cite{lu2013} decomposes the $\ell_0$ norm problem into a sequence of penalty subproblems which are solved by a block coordinate descent method \footnote{Code: \url{http://people.math.sfu.ca/~zhaosong}}. \bbb{(d)} CWOA is proposed in this paper to solve the original $\ell_0$ norm problem. The four methods above are denoted as QUIC-L1, ADMM-L0, PDA-L0, and CWOA-L0, respectively. We vary the parameter $s$ with the range $\{30,70,110,150,190,230,270\}$.

We remark that this paper pays more attention to the solution quality of the non-convex optimization problem in (\ref{eq:main}). When $\ell_1$ convex relaxation is considered, the resulting problem is strongly convex and existing convex methods will exactly lead to the same unique solution. Therefore, we only select QUIC as the representative of convex methods for comparision.

$\bullet$ \bbb{Quantitative Comparisons}. We demonstrate the accuracy of all methods by comparing their objective values in Figure \ref{fig:exp:fobj}. We also report their responding computational time in Figure \ref{fig:exp:time}.

Several conclusions can be drawn. \bbb{(i)} CWOA-L0 consistently outperforms existing state-of-the-art approaches in all data sets in term of accuracy. In addition, with increasing sparsity level $s$, the gap between our method and others becomes larger in some data sets. \bbb{(ii)} The solutions generated by QUIC-L1 and PDA-L0 may not always satisfy the positive definite constraint and incur much larger objective values. In addition, ADM-L0 seems to present the second best results in terms of accuracy. \bbb{(iii)} While the computational time of the other methods are insensitive to the change in sparsity level $s$, the computational time of our method scales linearly with the sparsity level. This is expected since CWOA-L0 needs to solve the reduced convex problem for at least $s$ times. \bbb{(iv)} The proposed method is slower than the compared method when the $s$ is large. However, the computational time pays off since our method consistently achieves lower objective values.

$\bullet$ \bbb{Convergence Behavior.} First of all, we demonstrate the convergence behavior of Algorithm \ref{alg:main} with different sparsity level $s$ on different data sets in Figure \ref{fig:exp:converge:1}. Since the methods \{QUIC-L1, ADMM-L0, PDA-L0\} may violate positive definite constraint or the sparsity constraint and our method always generates feasible solutions ($\bbb{X} \succ0$ and $\| \bbb{X} \|_{0,\text{off}} \leq s$) in \emph{all} iterations, we do not compare the objective values for different iterations of the algorithms.


We make two important observations from these results. (i) The objective value decrease monotonically. This is because Algorithm \ref{alg:main} is a greedy descent algorithm. (ii)We observe from Figure \ref{fig:exp:converge:1} that Algorithm \ref{alg:main} terminates at iteration $\{122,138,116,136\}$. Therefore, Algorithm \ref{alg:main} spent $\{230,270,230,270\}/2$ iterations and $\{122,138,116,136\} - \{230,270,230,270\}/2 = \{7,3,1,1\}$ iterations to perform the greedy pursuit stage and swap coordinates stage, respectively.

Secondly, we demonstrate the convergence behavior of Algorithm \ref{alg:newtoncg} on Real-World-mnist data set with different $s$ for different iterations $t=\{20,30,50,70\}$ of Algorithm \ref{alg:main} in Figure \ref{fig:exp:converge:2}. We make two important observations from these results. (i) The objective value decreases monotonically. (ii) The objective values stabilize after the 10th iteration, which means that our algorithm has converged, and the decrease of the objective value is negligible after the 10th iteration. This implies that one may use a looser stopping criterion without sacrificing accuracy.

%

$\bullet$ \bbb{A Practical Example.} We consider different methods to solve the sparse inverse covariance selection problem on a processed version of the 20 newsgroups data set \footnote{\url{http://www.cs.toronto.edu/~roweis/data/20news_w100.mat}}. We expect to obtain a small relation graph with 10 edges, thus, $k$ is set to 20 in our experiments. Two conclusions can be drawn from Figure \ref{fig:exp:example}. (i) Our method achieves the lowest objective value for solving the sparse inverse covariance selection problem. (ii) The proposed method CWOA-L0 is observed to output strong relation patterns in this example.



\section{Conclusions}\label{conclude} 

In this paper, we have developed an effective and efficient coordinate-wise optimization algorithm for solving the non-convex $\ell_0$ norm sparse inverse covariance selection problem. The algorithm is guaranteed to converge to a desirable coordinate-wise minimum point. Extensive experiments have shown that the proposed method \emph{consistently} outperforms existing methods.



%

%
%
%
%

\ifCLASSOPTIONcaptionsoff
  \newpage
\fi



%
%
%

\bibliographystyle{plain}
\bibliography{myref}

%


%
%




\end{document}